\title{Refined Coxeter polynomials}
\author{Sefi Ladkani}
\address{Department of Mathematics, University of Haifa, Mount Carmel,
Haifa 3498838, Israel}
\email{ladkani.math@gmail.com}
\DeclareMathOperator{\End}{End}
\DeclareMathOperator{\Ext}{Ext}
\DeclareMathOperator{\GL}{GL}
\DeclareMathOperator{\HH}{HH}
\DeclareMathOperator{\Hom}{Hom}
\DeclareMathOperator{\modf}{mod}
\DeclareMathOperator{\per}{per}
\DeclareMathOperator{\Roots}{Roots}
\DeclareMathOperator{\sgn}{sgn}
\newcommand{\cC}{\mathcal{C}}
\newcommand{\cD}{\mathcal{D}}
\newcommand{\eps}{\varepsilon}
\newcommand{\gL}{\Lambda}
\newcommand{\wh}{\widehat}
\newcommand{\wt}{\widetilde}
\newcommand{\bQ}{\mathbb{Q}}
\newcommand{\bR}{\mathbb{R}}
\newcommand{\bS}{\mathbb{S}}
\newcommand{\bZ}{\mathbb{Z}}
\newtheorem*{theorem*}{Theorem}
\newtheorem{theorem}{Theorem}[section]
\newtheorem{prop}[theorem]{Proposition}
\newtheorem{lemma}[theorem]{Lemma}
\newtheorem{cor}[theorem]{Corollary}
\theoremstyle{definition}
\newtheorem{defn}[theorem]{Definition}
\newtheorem{rem}[theorem]{Remark}
\newtheorem{example}[theorem]{Example}
\newtheorem{notat}[theorem]{Notation}
\numberwithin{equation}{section}
\begin{document}

\begin{abstract}
Coxeter polynomials are important homological invariants
that are defined for a large class of finite-dimensional
algebras.
It is of particular interest to develop methods to compute
these polynomials.
We define the notion of insertion of a poset into a triangular algebra
at a vertex of its quiver and show that its Coxeter polynomial is
controlled in a uniform way by two polynomials attached to the
poset that we call refined Coxeter polynomials.
Several properties of these polynomials are discussed.
Applications include
new symmetry properties for Coxeter polynomials of ordinal sums of posets,
constructions of new algebras of cyclotomic type
and interlaced towers of algebras.
\end{abstract}

\maketitle

\section{Introduction}

Coxeter polynomials play prominent role in the representation theory
of finite dimensional algebras, as demonstrated by the
survey articles~\cite{delaPena94,LenzingdelaPena08};
they are discrete invariants of derived equivalence, some of their
coefficients encode various homological information~\cite{Happel97}
and they provide links between the representation theory of quivers
and spectral graph theory~\cite{ACampo76,CvRoSi10}
as well as Lie theory~\cite{Bourbaki}.

It is an interesting and challenging problem to compute the 
Coxeter polynomials for various families of finite-dimensional algebras
given in combinatorial terms. This has been done for the families
of canonical, supercanonical and extended canonical algebras, see
the survey~\cite{LenzingdelaPena08}. A method to compute the
Coxeter polynomials of algebras obtained by gluing two quivers
with relations at a vertex~\cite{Boldt95} allows to recursively
compute them for path algebras of trees. Coxeter polynomials
of one-point extensions have been discussed in~\cite{Happel09}.

In this article we focus on triangular algebras.
A \emph{triangular algebra} is a quotient of a path algebra of a quiver
without oriented cycles by an admissible ideal. To such algebra~$\gL$
one can associate its \emph{Coxeter polynomial}
$\phi_\gL(x) = \det(xC_{\gL} + C_{\gL}^T)$, where $C_{\gL}$ is the
Cartan matrix of $\gL$. It is a monic, self-reciprocal polynomial.
Given a vertex $v$ in the quiver of $\gL$
and a finite partially ordered set (poset) $S$, we introduce
the notion of \emph{insertion} of $S$ to $\gL$ at $v$.
Roughly speaking, one inserts $S$ at $v$ and modifies appropriately the
ideal of relations to obtain a new triangular algebra
$\gL \xleftarrow{v} S$, we refer to Definition~\ref{def:insert} for
a precise description. This construction generalizes previous
constructions in the literature, such as that of supercanonical
algebras~\cite{LenzingdelaPena04} or lexicographic sums of
posets~\cite{Ladkani08a}.

Simple examples show that the Coxeter polynomials of the algebras
$\gL$, $\gL^-=(1-e)\gL(1-e)$ (where $e$ is the primitive idempotent
corresponding to $v$) and (the incidence algebra of) $S$ are not
sufficient to determine that of $\gL \xleftarrow{v} S$, see
Example~\ref{ex:pXvS}. However, we can prove the following result,
see Theorem~\ref{t:refined}.
\begin{theorem*}
Let $S$ be a poset. There exist polynomials
$\phi^0_S(x), \phi^1_S(x) \in \bZ[x]$ such that
for any triangular algebra $\gL$ and any vertex $v$ in its quiver,
\[
\phi_{\gL \xleftarrow{v} S}(x) =
\phi_{\gL}(x) \cdot \phi^0_S(x) + \phi_{\gL^-}(x) \cdot \phi^1_S(x) .
\]
\end{theorem*}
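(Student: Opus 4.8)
The plan is to compute both sides as determinants of the matrices $N_\Gamma := xC_\Gamma + C_\Gamma^T$ and to exploit the fact that, after inserting $S$, the new block is coupled to the rest of the quiver only through the single vertex $v$. First I would record the block form of the Cartan matrices. Ordering the vertices of $\gL$ so that $v$ comes last, write
\[
C_\gL = \begin{pmatrix} A & b \\ c^T & 1 \end{pmatrix},
\]
where $A = C_{\gL^-}$, the column $b$ records the paths out of $v$ and $c$ the paths into $v$ (the corner entry is $1$ by triangularity). The structural observation I would read off from Definition~\ref{def:insert} is that the Cartan matrix of $\gL \xleftarrow{v} S$ has the block form
\[
C_{\gL \xleftarrow{v} S} = \begin{pmatrix} A + (\mu-1)\,b c^T & b\,\beta^T \\ \gamma\,c^T & Z \end{pmatrix},
\]
where $Z$ is the zeta (Cartan) matrix of the incidence algebra of $S$, the vectors $\beta,\gamma \in \bZ^S$ encode how the arrows formerly incident to $v$ are reattached to $S$, and $\mu \geq 1$ counts the paths through the inserted poset. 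Both off-diagonal blocks are outer products because every path between the old part and $S$ factors through the former position of $v$, and the rank-one correction $(\mu-1)bc^T$ accounts for the single path through $v$ being replaced by the $\mu$ paths through $S$.

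Set $N_{\gL^-} = xA + A^T$, $N_S = xZ + Z^T$, and $U = [\,b\ \ c\,]$. Forming $N' = x C_{\gL \xleftarrow{v} S} + (C_{\gL \xleftarrow{v} S})^T$ and taking the Schur complement of the block $N_S$ (legitimately over the field $\bZ(x)$, where $\det N_S = \phi_S(x) \neq 0$, clearing denominators at the end) gives
\[
\phi_{\gL \xleftarrow{v} S}(x) = \det(N_S)\cdot \det\!\bigl(N_{\gL^-} + U\,G\,U^T\bigr),
\]
where $G = (\mu-1)\left(\begin{smallmatrix}0&x\\1&0\end{smallmatrix}\right) - \mathrm{diag}(x,1)\,[\beta\ \ \gamma]^T N_S^{-1} [\beta\ \ \gamma]\,\mathrm{diag}(1,x)$ is a $2\times 2$ matrix over $\bZ(x)$ built solely from $Z,\beta,\gamma,\mu$. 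Crucially, $G$ no longer involves $\gL$.

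The heart of the argument, which I would isolate as a lemma, is the purely poset-theoretic identity $G = \lambda\,H$ for a scalar $\lambda = \lambda(x) \in \bZ(x)$ depending only on $S$, where $H = \left(\begin{smallmatrix} x & x^2 \\ 1 & x \end{smallmatrix}\right)$ is a fixed matrix of rank one. Comparing entries, this is equivalent to the four scalar identities $\beta^T N_S^{-1} \beta = \gamma^T N_S^{-1} \gamma = -\lambda$, $\gamma^T N_S^{-1} \beta = (\mu-1) - \lambda$ and $x\,\beta^T N_S^{-1} \gamma = (\mu-1) - x\lambda$, which I would derive from the combinatorics of the zeta matrix together with the functional equation $N_S(x)^T = x\,N_S(1/x)$, using the specific shape of the reattachment vectors $\beta,\gamma$ supplied by the insertion. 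I expect this lemma to be the main obstacle: it is exactly the point where the structure of the insertion and the self-reciprocity of $N_S$ must be used, whereas everything surrounding it is formal linear algebra.

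Granting the lemma, the proof finishes quickly. Since $H$ has rank one, Sylvester's determinant identity collapses the reduced determinant to
\[
\det\!\bigl(N_{\gL^-} + \lambda\,U H U^T\bigr) = \phi_{\gL^-}(x)\bigl(1 + \lambda\,\mathrm{tr}(H\,U^T N_{\gL^-}^{-1} U)\bigr).
\]
A separate Schur-complement computation of $\phi_\gL$ from the block form of $C_\gL$ gives $\mathrm{tr}(H\,U^T N_{\gL^-}^{-1} U) = (xc+b)^T N_{\gL^-}^{-1}(xb+c) = (x+1) - \phi_\gL(x)/\phi_{\gL^-}(x)$. Substituting and multiplying by $\det(N_S) = \phi_S(x)$ yields
\[
\phi_{\gL \xleftarrow{v} S}(x) = \phi_\gL(x)\cdot\bigl(-\lambda\,\phi_S(x)\bigr) + \phi_{\gL^-}(x)\cdot \phi_S(x)\bigl(1 + \lambda(x+1)\bigr),
\]
so one sets $\phi^0_S = -\lambda\phi_S$ and $\phi^1_S = \phi_S\bigl(1 + \lambda(x+1)\bigr)$. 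Since $\lambda\phi_S = -\beta^T \mathrm{adj}(N_S)\beta$, these lie in $\bZ[x]$ and depend only on $S$, as required. As a sanity check, for $S$ a single point one has $Z = (1)$, $\beta=\gamma=(1)$, $\mu=1$ and $\lambda = -1/(x+1)$, giving $\phi^0_S = 1$ and $\phi^1_S = 0$, consistent with $\gL \xleftarrow{v} S = \gL$.
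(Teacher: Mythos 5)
Your overall architecture is sound and, once completed, would run parallel to the paper's own proof: both compute $\det(x\wt{C}+\wt{C}^T)$ for the Cartan matrix $\wt{C}$ of $\gL \xleftarrow{v} S$ and exploit the fact that the inserted block couples to the rest of the algebra only through data attached to $v$ (you via Schur complements and a rank-one update, the paper via row/column subtractions and expansion of the determinant, invoking Lemma~\ref{l:CLvS}). Your final formulas $\phi^0_S=-\lambda\phi_S$ and $\phi^1_S=\phi_S\bigl(1+\lambda(x+1)\bigr)$ are correct and agree with the paper's relation $\phi^1_S=\phi_S-(x+1)\phi^0_S$.

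However, there is a genuine gap, and it sits exactly where you flagged it. Your proof rests on two unestablished claims: the block form of $C_{\gL\xleftarrow{v}S}$ with the undetermined data $\mu,\beta,\gamma$, and the ``key lemma'' $G=\lambda H$, which you leave unproven. The second claim is false for general $\mu,\beta,\gamma$ (for instance, if $\beta\neq\gamma$ then $\beta^T N_S^{-1}\beta$ and $\gamma^T N_S^{-1}\gamma$ need not agree), so it cannot be derived ``from the combinatorics of the zeta matrix'' in the generality you set up; its truth hinges entirely on pinning down the reattachment data, which you never do. The correct statement --- the paper's Lemma~\ref{l:CLvS} --- is that the commutativity relations imposed in Definition~\ref{def:insert} identify \emph{all} paths through $S$, so that $\wt{C}_{uu'}=C_{uu'}$ (no correction term, i.e.\ $\mu=1$) and $\wt{C}_{us}=C_{uv}$, $\wt{C}_{su}=C_{vu}$ for \emph{every} $s\in S$, not just the minimal or maximal ones; that is, $\beta=\gamma=\mathbf{1}$, the all-ones vector (your phrase ``how the arrows formerly incident to $v$ are reattached'' suggests indicator vectors of $S_{\min}$ and $S_{\max}$, which would be wrong: the Cartan matrix records Hom-spaces, not arrows, and the relations make these one-dimensional uniformly over $S$). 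With these values your $G$ collapses in one line to $G=-\theta H$ with $\theta=\mathbf{1}^T N_S^{-1}\mathbf{1}$, i.e.\ $\lambda=-\theta$, and the rest of your argument goes through, giving $\phi^0_S=\mathbf{1}^T\mathrm{adj}(N_S)\,\mathbf{1}\in\bZ[x]$. So the missing idea is not a determinant identity but the structural fact about Cartan matrices of insertions; without it, your ``main obstacle'' is not merely hard, it is unprovable as stated.
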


The polynomials $\phi^0_S$ and $\phi^1_S$
are called the \emph{refined Coxeter polynomials} of $S$.
They can be expressed in terms
of the Coxeter polynomials of $S$ and the poset $\wh{S}$ obtained from
$S$ by adding a unique maximal element (Proposition~\ref{p:refbyCox}),
and thus can be seen as refinement of the Coxeter polynomial of $S$.
We discuss various properties of these polynomials.
Among these, we obtain a generalization of a formula for the
Coxeter polynomial for coalescence of trees
(Corollary~\ref{cor:insertS1}).

We present three applications of refined Coxeter polynomials.
The first concerns ordinal sums of posets.
It is known~\cite{Ladkani08a}
that if $X_1$ and $X_2$ are two posets,
then the incidence algebras of the ordinal sums $X_1 \oplus X_2$
and $X_2 \oplus X_1$ are derived equivalent, and hence
their Coxeter polynomials coincide.
However, there
are examples~[loc.\ cit.]
showing that for three posets,
the incidence algebras of the ordinal sums
$X_1 \oplus X_2 \oplus X_3$ and $X_2 \oplus X_1 \oplus X_3$ are
in general not derived equivalent. Nevertheless, it is interesting
to ask whether their Coxeter polynomials still coincide.
We derive a formula for the Coxeter polynomial of an ordinal sum of
posets in terms of their refined Coxeter polynomials
(Proposition~\ref{p:ordsum}) and
deduce that the Coxeter polynomial of any ordinal sum of posets
is independent on the order of summands (Corollary~\ref{cor:ordsum}).

The second application concerns algebras of cyclotomic type.
These are triangular algebras whose Coxeter polynomials are
products of cyclotomic polynomials.
A systematic study of these algebras has been initiated
in the paper~\cite{delaPena14}, which surveys several families of such algebras, including the path algebras of Dynkin and Euclidean quivers,
the canonical algebras~\cite{Lenzing99,Ringel84},
some supercanonical~\cite{LenzingdelaPena04} and
extended canonical~\cite{LenzingdelaPena11} algebras,
algebras that are fractionally Calabi-Yau, and hence 
by~\cite{HerschendIyama11} $n$-representation-finite algebras.

We show that the polynomial $\phi^0_S$ vanishes
for any poset $S$ whose incidence algebra
is a path algebra of an orientation of an Euclidean diagram of type
$\wt{A}$ (Theorem~\ref{t:Atilde}). As a consequence we produce new
families of algebras of cyclotomic type; consider the smallest
class $\cC$ of posets containing the empty set and satisfying
\begin{itemize}
\item[]
If $X$ is a poset with an element $v \in X$ such that
$X \setminus \{v\}$ belongs to $\cC$,
\\
then $X \xleftarrow{v} S$ belongs to $\cC$
for any poset $S$ of type $\wt{A}$.
\end{itemize}
Then the incidence algebra of any poset in $\cC$ is of cyclotomic type
and more generally, if $\gL$ is a triangular algebra and $v$ is a vertex
in its quiver such that $\gL^-$ is of cyclotomic type, then so is
$\gL \xleftarrow{v} S$ for any $S$ in $\cC$ (Corollary~\ref{cor:cyclo}).

The last application deals with interlaced towers of algebras 
in the sense of~\cite{delaPena15}. After observing that any Coxeter
polynomial $\phi(x)$ can be represented as
$\phi(x^2) = x^{\deg q} q(x+x^{-1})$ for some polynomial $q \in \bZ[x]$
(Corollary~\ref{cor:represent}, see~\cite{LenzingdelaPena09} for the
notion of representability), we can rephrase the original definition;
a sequence $\gL_1, \gL_2, \dots, \gL_n$ of triangular algebras
of length $n \geq 3$
is an \emph{interlaced tower of algebras} if the
roots of the polynomial representing $\phi_{\gL_1}$ are
real and simple,
$\phi_{\gL_{i+1}}(x) =
(x+1) \cdot \phi_{\gL_i}(x) - x \cdot \phi_{\gL_{i-1}}(x)$
for all $1 < i < n$ and
$\deg \phi_{\gL_{i+1}} = \deg \phi_{\gL_i} + 1$
for all $1 \leq i < n$.
One seeks interlacing properties of the roots of the polynomials
representing $\phi_{\gL_i}$ similar to 
the interlacing properties of eigenvalues of symmetric
matrices used in spectral graph theory~\cite{CvRoSi10}.

We observe that if $\gL$ is a triangular algebra such that all the roots
of the polynomial representing $\phi_\gL$ are real and simple,
then for any vertex $v$
the sequence of insertions $\gL \xleftarrow{v} \overrightarrow{A_i}$
forms an interlaced tower of algebras, where $\overrightarrow{A_i}$
denotes a chain with $i$ elements (Proposition~\ref{p:tower}).
This allows to construct several examples refuting claims made
in Theorem~2 and Theorem~5.5 of~\cite{delaPena15} 
on the Mahler measure
of Coxeter polynomials for interlaced towers of algebras.

The paper is organized as follows. 
In Section~\ref{sec:Coxeter} we provide some general background
on Coxeter polynomials for finite-dimensional algebras and
survey their basic properties.
Section~\ref{sec:Refined} is devoted to the theory of refined
Coxeter polynomials. The notion of insertion of a poset into a triangular
algebra at a vertex is introduced and a formula for the Coxeter polynomial
of such insertion in terms of the refined Coxeter polynomials
of the poset is presented. Further properties of these polynomials
are discussed.
Applications to ordinal sums of posets, algebras of cyclotomic type
and interlaced towers of algebras are described in Section~\ref{sec:Apps}.

\subsection*{Acknowledgement}

This is an extended version of my talk given at the ICRA 2020 conference
that took place virtually in November 2020. I thank the organizers for
the opportunity to give a talk at the conference and for the
invitation to contribute to this volume.
I thank the anonymous referee for carefully reading the manuscript
and providing many valuable comments and suggestions which helped improving the
presentation.

\section{Coxeter polynomials}
\label{sec:Coxeter}

\subsection{The Coxeter polynomial as a numerical derived invariant}

Let $K$ be a field and let $\cD$ be a triangulated $K$-linear category with
suspension functor $\Sigma$. Assume that for any $M, N \in \cD$,
\begin{itemize}
\item
$\dim_K \Hom_{\cD}(M,N) < \infty$ (i.e.\ $\cD$ is \emph{$\Hom$-finite}),

\item
$\Hom_{\cD}(M, \Sigma^i N)$ vanishes for all but finitely many 
elements $i \in \bZ$.
\end{itemize}
Then one can consider the expression
\[
\langle M, N \rangle =
\sum_{i \in \bZ} (-1)^i \dim_K \Hom_{\cD}(M, \Sigma^i N)
\]
which induces a well-defined bilinear form with integer values
on the \emph{Grothendieck group} $K_0(\cD)$, known as the
\emph{Euler form}.
Recall that
$K_0(\cD)$ is the quotient of the free abelian group
generated by the set of isomorphism classes
$[M]$ of objects $M \in \cD$ by its
subgroup generated by the expressions
$[M] - [L] - [N]$ for all the distinguished triangles
$L \to M \to N \to \Sigma L$ in $\cD$.

Any $K$-linear triangulated functor $F \colon \cD \to \cD'$ induces a map
$[F] \colon K_0(\cD) \to K_0(\cD')$.
If $F$ is an equivalence, the map $[F]$ is an isomorphism
which is moreover an isometry of the corresponding Euler forms, i.e.\
$\langle [F]u, [F]v \rangle = \langle u, v \rangle$ for
any $u,v \in K_0(\cD)$.

Assume that $K_0(\cD)$ is a \emph{lattice}, i.e.\ a free abelian group of
finite rank. By choosing a basis, we may identify $K_0(\cD)$ with
$\bZ^n$ for some integer $n \geq 1$ and represent the Euler form by a
matrix $C \in M_n(\bZ)$. If $\cD'$ is an equivalent
triangulated category and $C'$ is a matrix representing the Euler form
on $K_0(\cD')$ with respect to some basis, then the matrices
$C$ and $C'$ are \emph{congruent}, that is, there exists a matrix
$P \in \GL_n(\bZ)$ such that $C' = P C P^T$.

In order to extract a numerical invariant for $\cD$,
we form the matrix $xC + C^T$ whose entries lie in the polynomial ring $\bZ[x]$,
and consider its determinant, which is a polynomial with integer coefficients.

\begin{lemma}
Let $p(x) = \det(xC+C^T)$. Then 
$p(x)$ is invariant under triangulated equivalence
and moreover $p(x) = x^n p(x^{-1})$.
\end{lemma}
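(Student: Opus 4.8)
The plan is to treat the two assertions in turn, using only multiplicativity of the determinant and its invariance under transposition; no deeper input is needed.

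For the first assertion I would invoke the congruence relation established just before the statement: if $\cD'$ is a triangulated category equivalent to $\cD$ and $C'$ represents its Euler form in some basis, then $C' = PCP^T$ for some $P \in \GL_n(\bZ)$. Substituting this into the defining expression and using $(PCP^T)^T = PC^TP^T$, I get $xC' + C'^T = P(xC)P^T + PC^TP^T = P(xC+C^T)P^T$, so that $\det(xC'+C'^T) = (\det P)^2\det(xC+C^T)$. Because $P$ is invertible over $\bZ$, its determinant is a unit in $\bZ$, hence $\det P = \pm 1$ and $(\det P)^2 = 1$; the two polynomials therefore coincide. The only subtlety worth flagging is that the ambiguity coming from the choice of basis enters precisely through $(\det P)^2$, which is why integrality of $P$ — rather than mere invertibility over $K$ — is exactly what makes $p(x)$ well defined.

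For the self-reciprocal identity I would compute $x^n p(x^{-1})$ directly. Writing $p(x^{-1}) = \det(x^{-1}C + C^T)$ and pulling the scalar $x$ through an $n\times n$ determinant via $\det(xM) = x^n\det M$, I obtain $x^n p(x^{-1}) = \det\bigl(x(x^{-1}C+C^T)\bigr) = \det(C+xC^T)$. Invariance of the determinant under transposition then gives $\det(C+xC^T) = \det(C^T+xC) = \det(xC+C^T) = p(x)$, as desired. I expect no genuine obstacle here; the one point to keep in mind is that $p$ may have degree strictly smaller than $n$ when $\det C = 0$, but since the computation is an identity of determinants valid for every value of $x$ (equivalently, an identity in $\bZ[x,x^{-1}]$), the relation $p(x) = x^n p(x^{-1})$ holds as a genuine identity of polynomials regardless.
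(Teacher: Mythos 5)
Your proof is correct and follows essentially the same route as the paper: the first claim via congruence $C' = PCP^T$ with $(\det P)^2 = 1$, and the second via transposition invariance of the determinant together with pulling the scalar $x$ out of an $n \times n$ determinant (the paper runs this computation from $p(x)$ to $x^n p(x^{-1})$, you run it in the reverse direction, which is immaterial). Your closing remark that the identity holds even when $\det C = 0$ is a sensible clarification but not a departure from the paper's argument.
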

\begin{proof}
The first claim follows since $p(x)$ is invariant under
congruence of matrices, as
$x(PCP^T) + (PCP^T)^T = P(xC+C^T)P^T$ and $(\det P)^2=1$
for any $P \in \GL_n(\bZ)$. For the second claim,
\begin{align*}
p(x) &= \det(xC + C^T) = \det(xC + C^T)^T =
\det(xC^T+C) = x^n \det(C^T + x^{-1}C) 
\\
&= x^n p(x^{-1}) .
\qedhere
\end{align*}
\end{proof}

Assume now that $C$ is \emph{invertible} in $M_n(\bQ)$ (equivalentely,
$\det C \neq 0$). In this case one can extract a factor of $\det C$
and write
\[
\det(xC+C^T) = \det C \cdot \det(xI_n + C^{-1} C^T)
\]
so the remaining factor becomes a characteristic polynomial, namely
that of the matrix $-C^{-1} C^T \in M_n(\bQ)$. This motivates the
next definition.

\begin{defn} \label{def:CoxC}
The \emph{Coxeter polynomial} $\phi_{\cD}$ of $\cD$ is the
characteristic polynomial of $-C^{-1}C^T$, where $C$ is any matrix
representing the Euler form on $K_0(\cD)$.
\end{defn}

The above considerations show that the Coxeter polynomial is well-defined
when $K_0(\cD)$ is a lattice and the Euler form is non-degenerate
as a bilinear form on the vector space $K_0(\cD) \otimes_{\bZ} \bQ$.
We list some of its basic properties:
\begin{itemize}
\item
$\phi_{\cD}$ is a monic polynomial with rational coefficients which is
\emph{self-reciprocal}, i.e.\
$\phi_{\cD}(x) = x^n \phi_{\cD}(x^{-1})$.
In other words, $\phi_{\cD}(x) = x^n + a_{n-1}x^{n-1} + \dots + a_1 x + a_0$
where $a_0=1$ and $a_i=a_{n-i}$ for all $1 \leq i < n$.

\item
$\phi_{\cD}$ in an invariant of triangulated equivalence, i.e.\
if $\cD \simeq \cD'$ then $\phi_{\cD} = \phi_{\cD'}$.
\end{itemize}

It is possible to define a Coxeter polynomial even if the matrix $C$ is not
invertible, provided that some homological assumptions on the category $\cD$
are made.
Recall that a \emph{Serre functor} $\nu \colon \cD \to \cD$ is an additive
auto-equivalence with bi-functorial isomorphisms
$\Hom_{\cD}(M,\nu N) \simeq \Hom_{\cD}(N,M)^*$
for all $M, N \in \cD$, where $-^*$ denotes the dual vector space,
see~\cite[\S3]{BondalKapranov89} or \cite[\S1]{BondalOrlov01}.
A Serre functor $\nu$ is triangulated~\cite[Prop.~1.4]{BondalOrlov01},
hence it induces a linear map $[\nu] \colon K_0(\cD) \to K_0(\cD)$.
We may call the map $-[\nu]$ the \emph{Coxeter transformation} on $K_0(\cD)$,
for the following reason:
if $\cD$ is Krull-Schmidt, the existence of a Serre functor $\nu$ is
equivalent to the existence of Auslander-Reiten
triangles~\cite[\S I]{ReitenVdB02}, and the Auslander-Reiten translation
on $\cD$ is then given by $\Sigma^{-1} \nu$ whose corresponding map
on $K_0(\cD)$ equals $-[\nu]$.
Relations between the Auslander-Reiten translation (in categories of
modules) and the Coxeter transformation can be traced back
to~\cite[\S5]{Gabriel80} and \cite[\S2.4]{Ringel84}.

\begin{defn} \label{def:CoxSerre}
The \emph{Coxeter polynomial} $\phi_{\cD}$ of $\cD$ with a Serre
functor $\nu$ is the characteristic polynomial of any matrix
representing the Coxeter transformation $-[\nu]$.
\end{defn}

According to this definition, the coefficients of $\phi_{\cD}$ are
integers.
Now observe that $\langle M, \nu N \rangle = \langle N, M \rangle$.
Thus if $C$ is the matrix 
representing the Euler form and $S$ is the matrix representing the
Coxeter transformation with
respect to the same basis of $K_0(\cD)$, then $C \cdot S = -C^T$.
This shows that Definition~\ref{def:CoxSerre} agrees with
Definition~\ref{def:CoxC} when 
$\cD$ has a Serre functor and the Euler form is non-degenerate.

The category $\cD$ is \emph{fractionally Calabi-Yau}~\cite{Kontsevich98}
if it has a Serre
functor $\nu$ and there are integers $p$ and $q \geq 1$ such that
$\nu^q \simeq \Sigma^p$. Being fractionally Calabi-Yau implies that
the Coxeter transformation has finite order and hence the
Coxeter polynomial $\phi_{\cD}$ is a product of cyclotomic
polynomials~\cite[Theorem~2.10]{delaPena14}.
Note that these implications are strict.

\subsection{The Coxeter polynomial of a finite-dimensional algebra}

The discussion in the preceding section applies in particular to
triangulated categories associated with finite-dimensional algebras.
We recall the basic facts and refer to the book~\cite{Happel88}
for more details.

Let $\gL$ be a finite-dimensional algebra over a field $K$. Denote
by $\modf \gL$ the category of finitely generated right $\gL$-modules
and by $\cD^b(\modf \gL)$ its bounded derived category, whose objects are
bounded complexes of finitely generated $\gL$-modules. It is 
a triangulated category, containing as a full triangulated
subcategory the category $\per \gL$ of perfect complexes whose objects are
bounded complexes of finitely generated projective $\gL$-modules.

The triangulated category $\per \gL$ satisfies the assumptions made in the
beginning of the preceding section. Moreover, if
$P_1, P_2, \dots, P_n$ form a complete collection of
pairwise non-isomorphic indecomposable projective modules in 
$\modf \gL$, then the Grothendieck group $K_0(\per \gL)$ is isomorphic
to $\bZ^n$, with a basis given by the classes of
$P_1, P_2, \dots, P_n$. The matrix of the Euler form with respect
to this basis is given by the \emph{Cartan matrix} $C_\gL$ of $\gL$, 
whose entries are defined by $(C_\gL)_{ij} = \dim_K \Hom_{\gL}(P_i, P_j)$.

\begin{defn}
The \emph{Coxeter polynomial} $\phi_\gL$ of $\gL$ is the
Coxeter polynomial of $\per \gL$, whenever the latter is defined.
\end{defn}

As discussed in the previous section, we distinguish two cases:
\begin{enumerate}
\renewcommand{\theenumi}{\alph{enumi}}
\item
If $\det C_\gL \neq 0$, then $\phi_\gL(x) = \det(xI_n + C_\gL^{-1} C_\gL^T)$
according to Definition~\ref{def:CoxC}.

\item
If $\gL$ is \emph{Iwanaga-Gorenstein}, that is,
its $K$-dual $\gL^*$ has a finite projective resolution as a right
$\gL$-module and $\gL_{\gL}$ has a finite injective resolution, 
then $\per \gL$ has a Serre functor given by the Nakayama functor
$- \stackrel{\mathbf{L}}{\otimes}_{\gL} \gL^*$ and
Definition~\ref{def:CoxSerre} applies.
As noted in the previous section, this definition agrees with the
former when $\det C_\gL \neq 0$.
\end{enumerate}

\begin{example}
If $\gL$ is a symmetric algebra, that is $\gL \simeq \gL^*$ as
$\gL$-bimodules, then the identity functor on $\per \gL$ is a Serre
functor and $\phi_\gL(x) = (x+1)^n$
regardless of $C_\gL$ being invertible or not.
\end{example}

If $S_1, S_2, \dots, S_n$ form a complete collection of the
pairwise non-isomorphic simple $\gL$-modules,
numbered such that $P_i$ is the projective cover of $S_i$,
then their classes in the Grothendieck group of $\cD^b(\modf \gL)$ form
a basis and $[P_j] = \sum_{i=1}^{n} [P_j : S_i] [S_i]$
for any $1 \leq j \leq n$, where $[P_j : S_i]$ denotes the multiplicity
of $S_i$ in a composition series of $P_j$. 
Hence $(C_\gL)_{ij} = [P_j : S_i] \cdot \dim_K \End_{\gL}(S_i)$
for any $1 \leq i,j \leq n$.

In the sequel we will focus on the case where the algebra $\gL$ has finite
global dimension, that is, any module in $\modf \gL$ has a finite projective
resolution.
The inclusion of $\per \gL$ into
$\cD^b(\modf \gL)$ is then an equivalence of triangulated categories,
hence the matrix of multiplicities $([P_j : S_i])_{i,j=1}^n$ is invertible
over $\bZ$ and thus the Cartan matrix $C_\gL$ is invertible over $\bQ$.
If all the endomorphism algebras $\End_{\gL}(S_i)$ are one-dimensional
over $K$, then $C_{\gL}$ is even invertible over $\bZ$. This happens when
$K$ is algebraically closed, or more generally, when $\gL$ can be written
as a quotient
of a path algebra of a quiver by an ideal generated by linear combinations
of parallel paths (see the next section for precise definitions).

Moreover, if $\gL$ has finite global dimension then $\cD^b(\modf \gL)$ has
a Serre functor, so both definitions of the Coxeter polynomial apply, hence
$\phi_{\gL}(x) = \det(xI_n + C_{\gL}^{-1} C_{\gL}^T)$ is well-defined
and equals the characteristic polynomial of the Coxeter transformation.
Note that the matrix $c_{\gL}=(c_{ij})$ of the Euler form with respect to
the basis consisting of the classes of simple modules is given by
\[
c_{ij} = \sum_{r \geq 0} (-1)^r \dim_K \Ext^r_{\gL}(S_i, S_j) ,
\]
so $\phi_{\gL}(x) = \det(xI_n + c^{-1}_{\gL}c^T_{\gL})$ as well.

Another homological interpretation of the Coxeter polynomial is provided
by the following result of Happel~\cite{Happel97} describing,
when the field $K$ is algebraically closed and the algebra $\gL$
has finite global dimension, the coefficient $a_1$ of $x$ in $\phi_{\gL}(x)$,
which is minus the trace of the Coxeter transformation,
in terms of the dimensions of the Hochschild cohomology groups of $\gL$ as
$a_1 = \sum_{i \geq 0} (-1)^i \dim_K \HH^i(\gL)$.

\section{Refined Coxeter polynomials}
\label{sec:Refined}

\subsection{Posets and their incidence algebras}

For background on posets we refer to the book~\cite{Stanley}.
The derived categories of their incidence algebras are studied
in~\cite{Ladkani08a}.

Throughout, by a \emph{poset} we will always mean a finite partially
ordered set.
Let $X$ be a poset and let $K$ be a field.
Consider the $K$-linear vector space spanned by all the symbols $e_{xy}$
for the pairs $(x,y) \in X \times X$ with $x \leq y$. Define the
product of two symbols $e_{xy} \cdot e_{x'y'}$ as $e_{xy'}$ if $x'=y$ and
zero otherwise,
and extend it by linearity. This equips the above vector space with
a structure of a $K$-algebra, known as the \emph{incidence algebra} of $X$
over $K$ and denoted by $KX$.

The incidence algebra is a finite-dimensional algebra of finite global
dimension. If $x \in X$, then the right $KX$-module
$P_x = e_{xx} KX$ is an indecomposable projective module, and this gives
a bijection between the isomorphism classes of such modules and the
elements of $X$. One has
\[
\dim_K \Hom_{KX}(P_x, P_y) =
\begin{cases}
1 & \text{if $y \leq x$,} \\
0 & \text{otherwise}
\end{cases}
\]
hence the Cartan matrix of $KX$ equals the transpose of the incidence
matrix of $X$. It follows that the Cartan matrix and the Coxeter polynomial
of the algebra $KX$ are independent on the ground field $K$,
and therefore we will denote them by $C_X$ and $\phi_X$ respectively,
see also~\cite[\S3.4]{Ladkani08b}.

Recall that the \emph{Hasse quiver} $Q_X$ of $X$ is
the quiver whose vertices are the elements of $X$ and whose arrows
are defined by the rule that for any $x, y \in X$ there is an arrow
$x \to y$ if and only if $y$ \emph{covers} $x$, that is, $x < y$ and
there is no $z \in X$ such that $x < z < y$.
The Hasse quiver does not have any oriented
cycles, and the incidence algebra $KX$ is the quotient
of the path algebra $KQ_X$ by its ideal generated by the
differences $p-q$ for all pairs of paths $p,q$ which start at the same
vertex and end at the same vertex.

\subsection{Insertion of posets}

\begin{defn}
Let $(X, \leq_X)$ be a poset and $v \in X$. The \emph{insertion} of a poset
$(S, \leq_S)$ to $X$ at $v$, denoted by $X \xleftarrow{v} S$, is the poset
whose underlying set is the disjoint union $(X \setminus \{v\}) \cup S$
with the partial order $\leq$ given by
\begin{align*}
x \leq x' \Longleftrightarrow x \leq_X x', &&
s \leq s' \Longleftrightarrow s \leq_S s', &&
x \leq s  \Longleftrightarrow x \leq_X v, &&
s \leq x  \Longleftrightarrow v \leq_X x
\end{align*}
for any $x, x' \in X \setminus \{v\}$ and $s, s' \in S$.
\end{defn}

The next lemma records some basic properties of the insertion operation.

\begin{lemma} \label{l:insert} {\ }
\begin{enumerate}
\renewcommand{\theenumi}{\alph{enumi}}
\item
Let $v \in X$. Then 
$X \xleftarrow{v} \varnothing = X \setminus \{v\}$ and
$X \xleftarrow{v} \{\bullet\} \simeq X$.

\item
If $v, v'$ are two distinct elements of a poset $X$ and
$S, S'$ are posets, then
\[
(X \xleftarrow{v} S) \xleftarrow{v'} S' \simeq
(X \xleftarrow{v'} S') \xleftarrow{v} S .
\]

\item
If $X, Y, S$ are posets and $v \in X$, $u \in Y$, then
\[
Y \xleftarrow{u} (X \xleftarrow{v} S) \simeq
(Y \xleftarrow{u} X) \xleftarrow{v} S .
\]
\end{enumerate}
\end{lemma}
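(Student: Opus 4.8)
The plan is to prove all three parts uniformly, by exhibiting an explicit bijection of underlying sets and checking that it preserves the order, thereby reducing each claim to the four defining clauses of the insertion order. Since that order is specified block-by-block according to where each element lies, the verifications are routine unwindings of definitions; I expect the only genuine difficulty to be the bookkeeping in the nested insertions of parts (b) and (c), where one must verify that the intermediate vertex faithfully transmits the relevant relation of the original poset.

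For part (a) the first identity holds on the nose, since the underlying set of $X \xleftarrow{v} \varnothing$ is $(X \setminus \{v\}) \cup \varnothing = X \setminus \{v\}$ and the clause $x \leq x' \iff x \leq_X x'$ is exactly the induced order. For the second, I would take the bijection $X \xleftarrow{v} \{\bullet\} \to X$ fixing $X \setminus \{v\}$ and sending $\bullet \mapsto v$; the clauses $x \leq \bullet \iff x \leq_X v$ and $\bullet \leq x \iff v \leq_X x$ say precisely that $\bullet$ relates to every $x$ as $v$ does in $X$, so this is an order isomorphism.

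For part (b), since $v \neq v'$ both vertices survive the first insertion, so both iterated posets have the same underlying set $(X \setminus \{v,v'\}) \cup S \cup S'$, and I would use the identity map as the candidate isomorphism. Relations within $X \setminus \{v,v'\}$, within $S$, within $S'$, and between either $S$ or $S'$ and $X \setminus \{v,v'\}$ are insensitive to the order of insertion and agree immediately. The only genuinely two-step case is a pair $s \in S$, $s' \in S'$: unwinding the left side gives $s \leq s' \iff s \leq_{X \xleftarrow{v} S} v' \iff v \leq_X v'$, and the right side gives $s \leq s' \iff v \leq_{X \xleftarrow{v'} S'} s' \iff v \leq_X v'$, while the reversed relations reduce symmetrically to $v' \leq_X v$. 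As both sides yield the same condition on $\leq_X$, the identity is an isomorphism.

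For part (c), note first that $v$ remains a vertex of $Y \xleftarrow{u} X$, so the right-hand side is defined, and both posets have underlying set $(Y \setminus \{u\}) \cup (X \setminus \{v\}) \cup S$; again I would use the identity map. Relations within each block and between $S$ and $X \setminus \{v\}$ agree directly, and the cases needing the nested unfolding relate $Y \setminus \{u\}$ to the inserted material: for $y \in Y \setminus \{u\}$ and $s \in S$ the left side gives $y \leq s \iff y \leq_Y u$ since $s$ lies in the block inserted at $u$, while the right side gives $y \leq s \iff y \leq_{Y \xleftarrow{u} X} v \iff y \leq_Y u$, and likewise $y \leq x \iff y \leq_Y u$ for $x \in X \setminus \{v\}$, with the reversed relations handled symmetrically. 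Organizing the case analysis by the blocks $Y \setminus \{u\}$, $X \setminus \{v\}$ and $S$ keeps the full verification manageable.
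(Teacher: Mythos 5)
Your proof is correct and complete: the paper states this lemma without proof, treating it as a routine consequence of the definition, and your block-by-block verification (identity or obvious bijection on underlying sets, then checking the four defining clauses in each case, with the only nontrivial cases being the cross-block comparisons $s \in S$, $s' \in S'$ in (b) and $y \in Y \setminus \{u\}$ versus the inserted material in (c)) is exactly the intended argument. Nothing is missing; in particular you correctly note the points that make the iterated insertions well-defined, namely that $v' \neq v$ survives the first insertion in (b) and that $v$ remains a vertex of $Y \xleftarrow{u} X$ in (c).
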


Many classical operations on posets can be interpreted in terms of
the insertion construction. 
Let $n \geq 1$ and let $(X_i,\leq_i)$ be posets for $1 \leq i \leq n$.
Recall that there are two natural partial orders on the
disjoint union $\bigsqcup_{i=1}^n X_i$. In order to define them, 
let $x, y$ be two elements in $\bigsqcup_{i=1}^n X_i$ and write
$x \in X_i$ and $y \in X_j$ for some $1 \leq i, j \leq n$.

The first order, called the \emph{disjoint union} and denoted
by $X_1 + X_2 + \dots + X_n$, is defined by
$x \leq y$ if and only if $i=j$ and $x \leq_i y$. The second, called
the \emph{ordinal sum} and denoted by
$X_1 \oplus X_2 \oplus \dots \oplus X_n$, is defined by
$x \leq y$ if and only if $i<j$ or $i=j$ and $x \leq_i y$.

\begin{lemma} \label{l:ordsum}
Let $X_1, X_2, \dots X_n$ be posets. Then
\begin{align*}
X_1 + X_2 + \dots + X_n
\simeq
(( \dots (((
\xymatrix@1@=1pc{
{\bullet_1} & {\bullet_2} & {\dots} & {\bullet_n}
}) \xleftarrow{1} X_1) \xleftarrow{2} X_2)
\dots ) \xleftarrow{n} X_n) ,
\\
X_1 \oplus X_2 \oplus \dots \oplus X_n
\simeq
(( \dots (((
\xymatrix@1@=1pc{{\bullet_1} \ar[r] & {\bullet_2} \ar[r] & {\dots} \ar[r] & 
{\bullet_n}}
) \xleftarrow{1} X_1) \xleftarrow{2} X_2)
\dots ) \xleftarrow{n} X_n) ,
\end{align*}
where we represented the posets we insert at by their Hasse quivers.
\end{lemma}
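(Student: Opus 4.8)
The plan is to verify each of the two claimed isomorphisms by unwinding the definition of insertion and checking that the resulting partial order agrees with the definition of the disjoint union, respectively the ordinal sum. Both claims should follow by induction on $n$, using the associativity and commutativity properties recorded in Lemma~\ref{l:insert}(b) and (c); these let me perform the insertions in any convenient order and reduce multi-step insertions to single insertions into a poset already built.

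For the disjoint union, I would start from the antichain on $\{\bullet_1, \dots, \bullet_n\}$, in which distinct $\bullet_i$ are pairwise incomparable. Inserting $X_1$ at the vertex $\bullet_1$ replaces that vertex by the whole poset $X_1$, and by the definition of insertion an element $s \in X_1$ is comparable to another element $x$ precisely when $\bullet_1$ was comparable to $x$ in the antichain --- but $\bullet_1$ is comparable to no other $\bullet_j$, so $s$ is comparable to nothing outside $X_1$. Proceeding inductively, after all $n$ insertions the only comparabilities are those inherited from within each $X_i$, which is exactly the defining relation $x \le y \iff i = j$ and $x \le_i y$ of the disjoint union.

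For the ordinal sum, I would start from the linear quiver $\bullet_1 \to \bullet_2 \to \dots \to \bullet_n$, whose incidence relation is the total order $\bullet_1 < \bullet_2 < \dots < \bullet_n$. Inserting $X_i$ at $\bullet_i$ replaces it by $X_i$; the insertion rules $x \le s \iff x \le_X v$ and $s \le x \iff v \le_X x$ say that every element of $X_i$ inherits exactly the comparabilities that $\bullet_i$ had. Thus an element of $X_i$ lies below every element of $X_j$ for $j > i$ and above every element of $X_j$ for $j < i$, while comparabilities within $X_i$ come from $\le_i$. This is precisely the defining relation of the ordinal sum. The one point requiring care is that the insertions are performed sequentially at distinct vertices, so I must invoke Lemma~\ref{l:insert}(b) to see that the order in which I insert $X_1, \dots, X_n$ does not affect the result, and check that inserting $X_i$ does not disturb the relations already established among the other summands.

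The main obstacle, though a mild one, is purely bookkeeping: making the identification of underlying sets explicit and confirming that the four cases in the definition of insertion reproduce, in each inductive step, exactly the four cases ($i<j$, $i>j$, $i=j$ with $x\le_i y$, and incomparable) that define the target order. Since the insertion operation only modifies relations through the single inserted vertex and Lemma~\ref{l:insert} guarantees the operations commute, no genuine difficulty arises beyond this case analysis.
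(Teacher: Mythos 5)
Your proof is correct. The paper states Lemma~\ref{l:ordsum} without any proof, treating it as a routine consequence of the definition of insertion; your direct verification --- unwinding the four comparability rules of $X \xleftarrow{v} S$, inducting on $n$, and using Lemma~\ref{l:insert} to handle the bookkeeping of sequential insertions at distinct vertices --- is exactly the intended argument.
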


One would like to compute the Coxeter polynomial of the insertion
$X \xleftarrow{v} S$ in terms of quantities related to $X$, $v$ and $S$.
The next example, involving path algebras of Dynkin quivers,
shows that the knowledge of $X$, $v$ and the
Coxeter polynomial of $S$ is not sufficient to determine that
of $X \xleftarrow{v} S$.

\begin{example} \label{ex:pXvS}
$\phi_S = \phi_{S'}$ but $\phi_{X \xleftarrow{v} S} \neq
\phi_{X \xleftarrow{v} S'}$.

Consider the posets below given by their Hasse quivers.
For clarity, the posets $S,S'$ and the element $v$ of $X$
we insert at are colored black while the other element of $X$
is colored white.
\begin{align*}
X = \xymatrix{{\circ} \ar[r] & {\bullet_v}} &,&
S = \xymatrix{
{\bullet} \ar[r] & {\bullet} \ar[r] & {\bullet}
} &,&
S' = 
\begin{array}{c}
\xymatrix@=0.8pc{
&& {\bullet} \\ {\bullet} \ar[urr] \ar[drr] \\ && {\bullet}
}
\end{array}
\end{align*}
Then the Hasse quivers of $S$ and $S'$ are orientations of the Dynkin
diagram $A_3$ and $\phi_{S}(x) = \phi_{S'}(x) = x^3+x^2+x+1$. However,
\begin{align*}
X \xleftarrow{v} S =
\xymatrix{
{\circ} \ar[r] & {\bullet} \ar[r] & {\bullet} \ar[r] & {\bullet}
}
&,&
X \xleftarrow{v} S' =
\begin{array}{c}
\xymatrix@=0.8pc{
&& && {\bullet} \\ {\circ} \ar[rr] && {\bullet} \ar[urr] \ar[drr] \\
&& && {\bullet}
}
\end{array}
\end{align*}
so $\phi_{X \xleftarrow{v} S}(x) = x^4+x^3+x^2+x+1$ (Dynkin type $A_4$)
but $\phi_{X \xleftarrow{v} S'}(x) = x^4+x^3+x+1$ (Dynkin type $D_4$).

\end{example}

\subsection{Insertion of a poset into a triangular algebra}

We generalize the operation of insertion of posets to the realm of
triangular algebras using the language of quivers with relations.
We start by introducing some notations.
For a quiver $Q$, denote by $Q_0$ its set of vertices and by $Q_1$ its
set of arrows. For any arrow $\alpha \in Q_1$, let
$h(\alpha), t(\alpha) \in Q_0$ denote the vertices that $\alpha$ starts
and ends at, and extend this definition to paths. Two paths $p,q$ in $Q$
are \emph{parallel} if $h(p)=h(q)$ and $t(p)=t(q)$.

\begin{defn} 
A \emph{triangular algebra} is a quotient $\gL=KQ/I$
where $Q$ is a quiver without oriented cycles
and $I$ is an ideal generated by linear combinations of parallel paths
of lengths at least $2$.
\end{defn}

The isomorphism classes of indecomposable projective $\gL$-modules
correspond bijectively to the vertices of $Q$. If $\gL$ is triangular,
these vertices can be labeled by the numbers $1,2,\dots,|Q_0|$ such that
if $i \geq j$, there are no arrows from vertex $j$ to vertex $i$ and
thus the simple module $S_i$ does not occur as a composition factor of the
kernel of the cover map $P_j \to S_j$.
It follows (by induction on $j$) that each module $S_j$ has a finite
projective resolution and hence the algebra $\gL$ has finite global dimension.
Moreover, with respect to such labeling, the Cartan matrix $C_\gL$ is upper
triangular with $1$ on the main diagonal. In particular, $\det C_\gL=1$ and
the Coxeter polynomial can be written as $\phi_\gL(x) = \det(xC_\gL+C^T_\gL)$.

Let $S$ be a nonempty poset. 
An element $s \in S$ is \emph{minimal} if there is no $s' \in S$ such that
$s' < s$. Denote by $S_{\min}$ the set of all minimal elements in $S$,
and define similarly the set $S_{\max}$ of maximal elements in $S$.
An element $s \in S$ is a source (sink) in the Hasse quiver of $S$ 
if and only if it belongs to $S_{\min}$ ($S_{\max}$, respectively).

We are now ready to define what an insertion is.
\begin{defn} \label{def:insert}
Let $\gL = KQ/I$ be a triangular algebra and let $v \in Q_0$.
Let $S$ be a nonempty poset with Hasse quiver $Q_S$.
The \emph{insertion} of $S$ into $\gL$ at $v$, denoted $\gL \xleftarrow{v} S$,
is the quotient $K\wt{Q}/\wt{I}$ where the quiver $\wt{Q}$ and the ideal
$\wt{I}$ are defined as follows.

\begin{itemize}
\item
To get $\wt{Q}_0$ from $Q_0$, 
remove the vertex $v$ and add all the elements of $S$, that is,
\[
\wt{Q}_0 = (Q_0 \setminus \{v\}) \cup S .
\]

\item
For each arrow $\alpha \in Q_1$ ending at $v$ and each $s \in S_{\min}$,
define a new arrow $(\alpha,s)$ which starts at $h(\alpha)$
and ends at $s$.
Similarly, for each arrow $\beta \in Q_1$ starting at $v$ and each
$s \in S_{\max}$, define a new arrow $(s,\beta)$ which starts at $s$
and ends at $t(\beta)$. Then
\begin{align*}
\wt{Q}_1  = & \left\{ \gamma \in Q_1 : h(\gamma), t(\gamma) \neq v \right\}
\cup (Q_S)_1 \\
 \cup & \left\{ (\alpha,s) : \text{$\alpha \in Q_1$ such that $t(\alpha)=v$
and $s \in S_{\min}$} \right\} \\
 \cup & \left\{ (s,\beta) : \text{$\beta \in Q_1$ such that $h(\beta)=v$
and $s \in S_{\max}$} \right\} .
\end{align*}

\noindent
If $p$ is a path in $Q$ ending at $v$ and $s \in S_{\min}$, let
$(p,s)$ be the path in $\wt{Q}$ obtained from $p$ by replacing its last arrow
$\alpha$ by $(\alpha,s)$. Define similarly $(s,p)$
for a path $p$ in $Q$ starting at $v$ and $s \in S_{\max}$.

\item
The ideal $\wt{I}$ is generated by the following elements:
\begin{enumerate}
\renewcommand{\labelenumi}{\theenumi.}
\item
Modification of relations starting or ending at $v$:
\begin{enumerate}
\item \label{it:ins:smin}
The elements $\sum c_p (p,s)$
where $s \in S_{\min}$ and
$\sum c_p p \in I$ is a linear combination of parallel paths ending at $v$.

\item \label{it:ins:smax}
The elements $\sum c_p (s,p)$
where $s \in S_{\max}$ and
$\sum c_p p \in I$ is a linear combination of parallel paths starting at $v$.
\end{enumerate}
(here, $c_p \in K$ are scalars).

\item
Commutativity relations arising from $Q_S$:
\begin{enumerate}
\item \label{it:ins:commS}
$q'-q''$ for any two parallel paths $q',q''$ in $Q_S$.

\item \label{it:ins:commvout}
For any arrow $\alpha \in Q_1$ ending at $v$ and any two paths $q',q''$ in
$Q_S$ such that $h(q'), h(q'') \in S_{\min}$ and $t(q')=t(q'')$,
\[
(\alpha, h(q'))q' - (\alpha, h(q''))q''
\]

\item \label{it:ins:commvin}
For any arrow $\beta \in Q_1$ starting at $v$ and any two paths $q',q''$ in
$Q_S$ such that $t(q'), t(q'') \in S_{\max}$ and $h(q')=h(q'')$,
\[
q' (t(q'), \beta) - q'' (t(q''), \beta)
\]

\item \label{it:ins:commvthru}
For any $\alpha, \beta \in Q_1$ such that $t(\alpha)=h(\beta)=v$
and any two paths $q',q''$ in $Q_S$
such that $h(q'), h(q'') \in S_{\min}$ and $t(q'), t(q'') \in S_{\max}$,
\[
(\alpha, h(q')) q' (t(q'), \beta) - (\alpha, h(q'')) q'' (t(q''), \beta)
\]
\end{enumerate}

If $p$ is a path in $Q$ passing through $v$ (but not starting or ending at
$v$), write $p=p'p''$ as a composition of non-trivial paths $p',p''$ with
$t(p')=h(p'')=v$. Choose a maximal path $q$ in $Q_S$, i.e.\
$h(q) \in S_{\min}$ and $t(q) \in S_{\max}$, define
$\wt{p} = (p',h(q)) q (t(q),p'')$ and note that
the value of $\wt{p}$ modulo $\wt{I}$ is independent on the choice of
$q$. If $p$ does not pass through $v$, set $\wt{p}=p$.

\item \label{it:ins:vthru}
The elements
\[
\sum c_p \wt{p}
\]
where $\sum c_p p \in I $ is a linear combination of parallel paths that
do not start nor end at $v$.
\end{enumerate}
\end{itemize}
\end{defn}

By construction, the quiver $\wt{Q}$ has no oriented cycles and hence
the insertion $\gL \xleftarrow{v} S$ is also a triangular algebra.

\begin{notat}
If $\gL = KQ/I$ and $v \in Q_0$, denote by $\gL^-$ the algebra
$(1-e_v) \gL (1-e_v)$ where $e_v \in \gL$ is the primitive idempotent
corresponding to $v$.
\end{notat}

\begin{rem} \label{rem:insert01}
If the poset $S$ has a unique minimal element $\wh{0}$ and a unique maximal
element $\wh{1}$ then to form the insertion of $S$ to $\gL$ at $v$ one simply
``inserts'' the quiver $Q_S$ with its full commutativity relations at the
place of $v$.
Namely, arrows and relations in $Q$ that ended in $v$ now end at $\wh{0}$,
arrows and relations that started in $v$ now start at $\wh{1}$; 
there are no commutativity relations in addition to those on $Q_S$;
and any occurrence of $\alpha \beta$ with $t(\alpha)=h(\beta)=v$
inside a path appearing in a relation is replaced by
$\alpha q \beta$ where $q$ is any path in $Q_S$ from $\wh{0}$ to $\wh{1}$.
\end{rem}

In the next examples, consider a triangular algebra $\gL$ and
a vertex $v$ in its quiver.
Denote by $\alpha_1,\dots,\alpha_k$ be the arrows ending at $v$ and by
$\beta_1,\dots,\beta_l$ the arrows starting at~$v$.

\begin{example}[Insertion of a chain] \label{ex:insertAn}
Let $S=\overrightarrow{A_n}$ be a chain with $n$ elements
$v_1 < v_2 < \dots < v_n$. Its Hasse quiver is a linear orientation of the
Dynkin diagram $A_n$.
Since each of the sets $S_{\min}, S_{\max}$
consists of just one element, Remark~\ref{rem:insert01} applies.
The quiver of the insertion $\gL \xleftarrow{v} \overrightarrow{A_n}$
is obtained by replacing $v$ with the oriented line
$v_1 \to v_2 \to \dots \to v_n$ as shown in Figure~\ref{fig:insert}.
Relations in $\gL$ that ended at $v$ now end at $v_1$,
and relations that started at $v$ now start at $v_n$. Finally,
for relations involving paths passing through $v$,
replace by $\alpha_i \rho_1 \dots \rho_{n-1} \beta_j$ any occurrence of
$\alpha_i \beta_j$ inside a path which is part of such a relation.
\end{example}

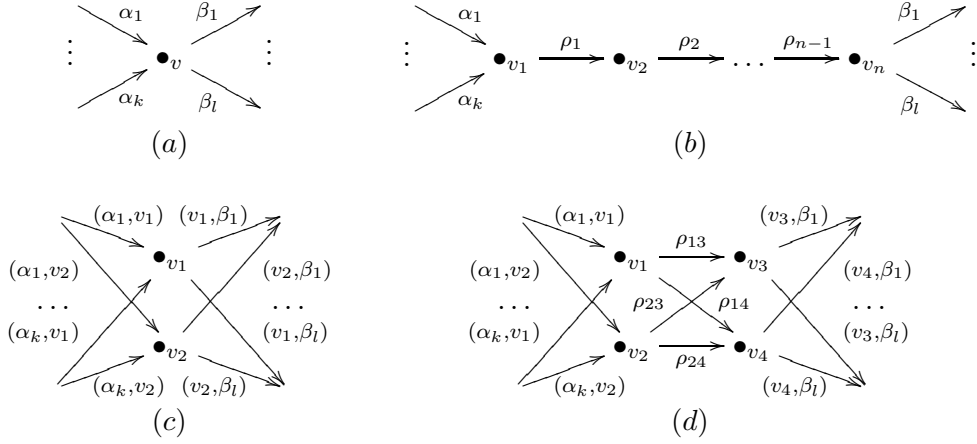
\begin{figure}
\[
\begin{array}{ccc}
\xymatrix@R=0.5pc{
{} \ar[dr]^{\alpha_1} & & {} \\
{\vdots} & {\bullet_v} \ar[ur]^{\beta_1} \ar[dr]_{\beta_l} & {\vdots} \\
{} \ar[ur]_{\alpha_k} & & {}
}
&&
\xymatrix@R=0.5pc{
{} \ar[dr]^{\alpha_1} & & & & & {} \\
{\vdots} & {\bullet_{v_1}} \ar[r]^{\rho_1} &
{\bullet_{v_2}} \ar[r]^{\rho_2} & {\dots} \ar[r]^{\rho_{n-1}} &
{\bullet_{v_n}} \ar[ur]^{\beta_1} \ar[dr]_{\beta_l} & {\vdots} \\
{} \ar[ur]_{\alpha_k} & & & & & {}
}
\\
(a) && (b) \\ \\
\xymatrix@R=0.5pc{
{} \ar[dr]^{(\alpha_1,v_1)} \ar[dddr]_(.3){(\alpha_1,v_2)} && {} \\
& {\bullet_{v_1}} \ar[ur]^{(v_1,\beta_1)} \ar[dddr]^(.7){(v_1,\beta_l)} \\
{\dots} && {\dots} \\
& {\bullet_{v_2}} \ar[dr]_{(v_2,\beta_l)} \ar[uuur]_(.7){(v_2,\beta_1)} \\
{} \ar[ur]_{(\alpha_k,v_2)} \ar[uuur]^(.3){(\alpha_k,v_1)}  && {}
}
&&
\xymatrix@R=0.5pc{
{} \ar[dr]^{(\alpha_1,v_1)} \ar[dddr]_(.3){(\alpha_1,v_2)} &&& {} \\
& {\bullet_{v_1}} \ar[r]^{\rho_{13}} \ar[ddr]^(.7){\rho_{14}}
& {\bullet_{v_3}} \ar[ur]^{(v_3, \beta_1)} \ar[dddr]^(.7){(v_3, \beta_l)} \\
{\dots} &&& {\dots} \\
& {\bullet_{v_2}} \ar[r]_{\rho_{24}} \ar[uur]^(.3){\rho_{23}}
& {\bullet_{v_4}} \ar[dr]_{(v_4,\beta_l)} \ar[uuur]_(.7){(v_4,\beta_1)} \\
{} \ar[ur]_{(\alpha_k,v_2)} \ar[uuur]^(.3){(\alpha_k,v_1)} &&& {}
}
\\
(c) && (d)
\end{array}
\]
\caption{Insertion of a poset.
(a) Neighborhood of $v$ in $Q$.
Portion of the quiver $\wt{Q}$ after inserting at $v$:
(b) the chain ${A_n}$; (c) an anti-chain with two elements;
(d) a poset of type $\wt{A}_{2,2}$.}
\label{fig:insert}
\end{figure}

\begin{example}[Insertion of an anti-chain]
Let $S$ consist of two incomparable elements.
The quiver of the insertion $\gL \xleftarrow{v} S$ is obtained from $Q$
by duplicating $v$ and all arrows starting or ending at $v$, as shown
in Figure~\ref{fig:insert}. Each relation in $\gL$ that ends at $v$ is
duplicated to form two relations, one ending at $v_1$ and the other at $v_2$,
and similarly for relations that start at $v$.
There are the commutativity relations
\[
(\alpha_i, v_1)(v_1, \beta_j) - (\alpha_i, v_2)(v_2, \beta_j)
\]
and finally each occurrence of $\alpha_i \beta_j$ inside a path which
is part of a relation is replaced by $(\alpha_i, v_1)(v_1, \beta_j)$.
\end{example}

\begin{example}[Insertion of a poset of type $\wt{A}_{2,2}$] 
\label{ex:insertA22}
Let $S$ be the ordinal sum of two anti-chains of size $2$. Its
incidence algebra equals the path algebra of a quiver of Euclidean type
$\wt{A}_{2,2}$, see also Example~\ref{ex:A22}.
The quiver of the insertion $\gL \xleftarrow{v} S$ is shown
in Figure~\ref{fig:insert}. Each relation in $\gL$ that ends at $v$ is
duplicated to form two relations, one ending at $v_1$ and the other at $v_2$,
and each relation that starts at $v$ is duplicated to form two relations,
one starting at $v_3$ and the other at $v_4$.
There are the commutativity relations
\begin{align*}
(\alpha_i, v_1) \rho_{13} - (\alpha_i, v_2) \rho_{23} &,&
\rho_{13} (v_3, \beta_j) - \rho_{14} (v_4 , \beta_j) ,\\
(\alpha_i, v_1) \rho_{14} - (\alpha_i, v_2) \rho_{24} &,&
\rho_{23} (v_3, \beta_j) - \rho_{24} (v_4 , \beta_j)
\end{align*}
and finally each occurrence of $\alpha_i \beta_j$ inside a path which
is part of a relation is replaced (for instance)
by $(\alpha_i, v_1)\rho_{13}(v_3, \beta_j)$.
\end{example}

\begin{example}[Supercanonical algebras]
Supercanonical algebras were introduced by Lenzing and de la Pe\~{n}a
in~\cite{LenzingdelaPena04} as a generalization of canonical algebras.
Using our Definition~\ref{def:insert}, we can rephrase the original
definition as follows.

Let $t \geq 2$, let $S_1, S_2, \dots, S_t$ be posets and let
$\lambda_3, \dots, \lambda_t$ be $t-2$ pairwise distinct nonzero scalars
from $K$.
The corresponding supercanonical algebra is obtained by
starting with the canonical algebra given by the quiver
\[
\xymatrix@R=1.5pc@C=1pc{
&&& {\bullet} \ar[dlll]_{\alpha_1} \ar[dl]^{\alpha_2}  \ar[drrr]^{\alpha_t} \\
{\bullet_{v_1}} \ar[drrr]_{\beta_1} &&
{\bullet_{v_2}} \ar[dr]^{\beta_2} && {\dots} &&
{\bullet_{v_t}} \ar[dlll]^{\beta_t} \\
&&& {\bullet}
}
\]
with relations $\alpha_i \beta_i = \alpha_1 \beta_1 - \lambda_i \alpha_2 \beta_2$
for $3 \leq i \leq t$ and iteratively inserting at each vertex $v_i$ 
the poset $S_i$ for $1 \leq i \leq t$.
\end{example}

One can check that Definition~\ref{def:insert} indeed generalizes that of
insertion of posets.

\begin{lemma} \label{l:KXvS}
If $X$ and $S$ are posets and $v \in X$, then
\begin{align*}
K(X \xleftarrow{v} S) \simeq KX \xleftarrow{v} S &,&
(KX)^- = K(X \setminus \{v\}).
\end{align*}
\end{lemma}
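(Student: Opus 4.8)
The statement comprises two claims, which I would treat separately.

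For the identity $(KX)^- = K(X \setminus \{v\})$, I would argue by a direct computation with the standard basis. The primitive idempotent at $v$ is $e_{vv}$, so $1-e_v = \sum_{x \neq v} e_{xx}$, and left- and right-multiplying the basis $\{e_{xy} : x \leq_X y\}$ of $KX$ by $1-e_v$ annihilates exactly those $e_{xy}$ with $x=v$ or $y=v$. Hence $(1-e_v)KX(1-e_v)$ has $K$-basis $\{e_{xy} : x,y \in X \setminus \{v\},\ x \leq_X y\}$ with the multiplication inherited from $KX$. Since the partial order on $X \setminus \{v\}$ is by definition the restriction of $\leq_X$, this basis and these structure constants coincide with those of $K(X \setminus \{v\})$, giving the asserted equality.

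For the isomorphism, write $P = X \xleftarrow{v} S$ and recall that $KX = KQ_X/I$ with $I$ the commutativity ideal, so that Definition~\ref{def:insert} applies to $\gL = KX$ with $Q = Q_X$. I would prove the statement in two steps: first that the Hasse quiver $Q_P$ equals the quiver $\wt{Q}$ produced by Definition~\ref{def:insert}, and then that the commutativity ideal of $KP$ coincides with the ideal $\wt{I}$. For the quivers I would go through the covering relations of $P$ by the type of their endpoints. A pair of old vertices $x, x' \in X \setminus \{v\}$ has $x'$ covering $x$ in $P$ if and only if $x'$ covers $x$ in $X$ (a chain $x <_X z <_X x'$ through $z=v$ would insert any $s \in S$ strictly between $x$ and $x'$ in $P$), recovering the arrows of $Q_X$ not incident to $v$; likewise covers inside $S$ are exactly the arrows of $Q_S$. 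A cover $x \to s$ with $x \in X \setminus \{v\}$ and $s \in S$ forces $s \in S_{\min}$ and $v$ covering $x$ in $X$, i.e.\ an arrow $\alpha$ of $Q_X$ ending at $v$, which is precisely the datum defining the new arrow $(\alpha,s)$; dually $s \to x$ yields the arrows $(s,\beta)$. This matches $\wt{Q}_1$ exactly.

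For the ideals, one inclusion is immediate: each generator listed in Definition~\ref{def:insert} (for $I$ the commutativity ideal, the combinations $\sum c_p p \in I$ are spanned by differences $p'-p''$ of parallel paths) is itself a difference of two parallel paths of $\wt{Q}$, so $\wt{I}$ is contained in the commutativity ideal $J$ of $KP$. The substance is the reverse inclusion $J \subseteq \wt{I}$, equivalently that in $A = K\wt{Q}/\wt{I}$ any two parallel paths are equal. I would prove this by putting each path into a normal form depending only on its endpoints. The key observation is that $S$ is \emph{convex} in $P$: an ascending path cannot leave $S$ upward through some $(s,\beta)$ into a vertex above $v$ and then re-enter it, so every path meets $S$ in a single contiguous segment and factors as a path among old vertices below $v$, then an entry arrow $(\alpha,s)$, then a path in $Q_S$, then an exit arrow $(s',\beta)$, then a path among old vertices above $v$, with some pieces possibly absent. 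Relations of type 2a commute the middle $Q_S$-part to a fixed shape; relations 1a, 1b, 2b, 2c, 2d normalize the entry and exit arrows and push the commutation across the two portals; and relation 3 (together with the relations $p'-p'' \in I$ of $X$, which it transports via $p \mapsto \wt{p}$) handles the old-vertex parts, including paths running through the region formerly occupied by $v$. Comparing the resulting normal form with the intervals of $P$ then yields $A \cong KP$. The main obstacle is exactly this reverse inclusion: one must verify that the finite list of relation families 1a--3, specialized to the commutativity ideal of $X$, suffices to identify \emph{all} pairs of parallel paths of $\wt{Q}$, and in particular that the cross-terms 2b--2d and the transported relations of type 3 correctly account for paths that both traverse $S$ and pass through the old part of the quiver near $v$. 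The bookkeeping of how a single commutativity relation of $P$ decomposes along the portal arrows $(\alpha,s)$ and $(s,\beta)$ is where the care is required; everything else is a routine unwinding of the two definitions.
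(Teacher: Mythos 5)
The paper offers no argument for this lemma at all: it precedes the statement with ``One can check that Definition~\ref{def:insert} indeed generalizes that of insertion of posets'' and records the lemma without proof, so there is no route of the paper's to compare yours against; your proposal supplies the omitted verification, and it is correct. The computation of $(KX)^-$ from the basis $\{e_{xy}\}$ is exactly right. The identification of the Hasse quiver of $X \xleftarrow{v} S$ with $\wt{Q}$ is also right, including the one point where care is needed: covers between old vertices are unchanged precisely because $S$ is nonempty (as Definition~\ref{def:insert} assumes), so a chain through $v$ in $X$ leaves a chain through $S$ in the insertion. For the ideals, your structure is the correct one: the inclusion $\wt{I} \subseteq J$ is immediate since, for $I$ the commutativity ideal, every generator of $\wt{I}$ is a difference of parallel paths in $\wt{Q}$; the reverse inclusion rests on your convexity observation (once a path exits $S$ it sits above $v$ in the $X$-order and can never re-enter, since entry arrows emanate only from vertices below $v$), which forces the five-piece factorization, and the relation families do match the cases as you say: 2a for paths with both endpoints in $S$; 1a with 2b (resp.\ 1b with 2c) for paths with one endpoint in $S$, where 1a absorbs the discrepancy between different prefixes into $v$ and 2b the discrepancy between different entry points and routes; and for paths crossing the whole $S$-region, 2d identifies a crossing path with the normal form $\wt{p}$ built from the chosen maximal path $q$ (this works because collapsing the $S$-segment of the crossing path recovers exactly the portal arrows $\alpha, \beta$ appearing in 2d), after which the transported type-3 generators identify the normal forms of any two parallel crossings, or a crossing with a path avoiding $S$. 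Your closing caveat about bookkeeping is fair, but the case analysis just described is complete, so what you have is a genuine proof in outline rather than a proof with a gap.
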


The next lemma records the crucial property of the insertion operation
in terms of the Cartan matrices of the algebras involved.
In fact, this is the only property to be used throughout this paper.

\begin{lemma} \label{l:CLvS}
Let $\gL$ be a triangular algebra with quiver $Q$ and Cartan
matrix $C$. Let $S$ be a poset with Cartan matrix $C_S$.
Let $v$ be a vertex in $Q$ and let $\wt{C}$ be the Cartan matrix of $\gL \xleftarrow{v} S$.
Then
\begin{align*}
\wt{C}_{uu'} = C_{uu'} &,&
\wt{C}_{us} = C_{uv} &,&
\wt{C}_{su} = C_{vu} &,&
\wt{C}_{ss'} = (C_S)_{ss'}
\end{align*}
for any $u, u' \in Q_0 \setminus \{v\}$ and $s, s' \in S$.
\end{lemma}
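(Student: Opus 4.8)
The plan is to read each entry of $\wt{C}$ as the dimension of a ``corner'' of the insertion algebra $\wt{\gL}=\gL\xleftarrow{v}S=K\wt{Q}/\wt{I}$ and to identify that corner with the corresponding corner of $\gL$ or of the incidence algebra $KS$. Recall that $(C_\gL)_{ij}=\dim_K\Hom_\gL(P_i,P_j)=\dim_K e_j\gL e_i$, and that $e_j\gL e_i$ is spanned, modulo the relations, by the paths of the quiver running from $j$ to $i$; the same holds for $\wt{\gL}$ and for $KS$. Thus $\wt{C}_{ab}=\dim_K e_b\wt{\gL}e_a$, and it suffices to exhibit, in each of the four cases, a $K$-linear isomorphism between the relevant corner of $\wt{\gL}$ and the asserted corner of $\gL$ or $KS$.

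The combinatorial heart of the argument is a \emph{single-passage} observation: because $Q$ has no oriented cycles, any path of $\wt{Q}$ meets the inserted copy of $S$ in at most one contiguous stretch. Indeed, a path that left $S$ along some arrow $(s_1,\beta)$ (with $\beta$ starting at $v$ in $Q$) and later re-entered along some arrow $(\alpha,s_0)$ (with $\alpha$ ending at $v$) would, after contracting $S$, produce the oriented cycle $v\xrightarrow{\beta}\cdots\xrightarrow{\alpha}v$ in $Q$, which is excluded. This fixes the shape of every path between two prescribed vertices of $\wt{Q}$: it either avoids $S$ altogether or runs $[\text{outside}]\cdot(\alpha,s_0)\cdot[\text{inside } Q_S]\cdot(s_1,\beta)\cdot[\text{outside}]$, with the obvious degenerations when an endpoint already lies in $S$.

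I then treat the four cases. For $s,s'\in S$ the single-passage observation forces every path between them to stay inside $Q_S$, and the only generators of $\wt{I}$ supported on arrows of $Q_S$ are the commutativity relations~(2a), which are precisely the defining relations of $KS$; hence $e_{s'}\wt{\gL}e_s\cong e_{s'}(KS)e_s$ and $\wt{C}_{ss'}=(C_S)_{ss'}$. For $u\in Q_0\setminus\{v\}$ and $s\in S$, the corner $e_s\wt{\gL}e_u$ (matching $e_v\gL e_u$, i.e.\ paths issuing from $v$) consists of a path of $Q_S$ from $s$ to a maximal element $s_1$, a single exit arrow $(s_1,\beta)$, and a path of $Q\setminus\{v\}$; the commutativity relations~(2c) together with~(2a) make the head $q\,(s_1,\beta)$ depend only on $\beta$ and on $s$, while the relations~(1b) convert each element of $I$ supported on paths starting at $v$ into the corresponding relation among the lifts. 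As these are the only generators of $\wt{I}$ touching this corner, the assignment $\beta\cdot r\mapsto q\,(s_1,\beta)\cdot r$ is an isomorphism $e_s\wt{\gL}e_u\cong e_v\gL e_u$, giving $\wt{C}_{us}=C_{uv}$. The corner $e_u\wt{\gL}e_s$ is handled symmetrically, with entry in place of exit: the relations~(1a) and~(2b) yield $e_u\wt{\gL}e_s\cong e_u\gL e_v$ and $\wt{C}_{su}=C_{vu}$. Finally, for $u,u'\in Q_0\setminus\{v\}$, a path between them either avoids $S$ (and is then literally a path of $Q$ avoiding $v$) or makes a single passage, whose middle factor $(\alpha,s_0)\,q\,(s_1,\beta)$ is collapsed by the relations~(2d) and~(2a) to an element depending only on the pair $(\alpha,\beta)$, reproducing the passage $\alpha\beta$ through $v$; the type~3 generators of $\wt{I}$ then reproduce the relations of $I$ among such paths. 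Hence $e_{u'}\wt{\gL}e_u\cong e_{u'}\gL e_u$ and $\wt{C}_{uu'}=C_{uu'}$.

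The step that demands genuine care---and that I expect to be the main obstacle---is verifying that in each corner $\wt{I}$ imposes \emph{exactly} the stated relations: that the commutativity generators collapse the $S$-passage to a one-dimensional contribution per incident arrow (or pair of arrows) and nothing more, and that the type~1 and type~3 generators reproduce $I$ without introducing spurious identifications among the lifts. I would make this precise by choosing a normal form for paths modulo $\wt{I}$, namely a path whose unique $S$-passage is written through a fixed maximal chain of $Q_S$ as in Definition~\ref{def:insert}, and checking that the displayed assignments carry a basis of the $\gL$- or $KS$-corner bijectively onto these normal forms while intertwining the relations on both sides. Granting this bookkeeping, the four displayed equalities follow simultaneously.
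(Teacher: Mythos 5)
The paper does not actually prove Lemma~\ref{l:CLvS}: it is recorded as the basic bookkeeping property of the insertion construction and stated without argument, so there is no author's proof to compare yours against; what follows is an assessment of your proof on its own terms. Your route is the natural one and is correct in outline. The two load-bearing points are sound: reading Cartan entries as dimensions of corners $e_b\wt{\gL}e_a$ spanned by residue classes of paths, and the single-passage observation, whose derivation from acyclicity of $Q$ is valid because every arrow of $\wt{Q}$ not incident to the inserted copy of $S$ is an arrow of $Q$ not incident to $v$, so a path exiting $S$ and re-entering it would contract to an oriented cycle through $v$ in $Q$. The matching of generators to corners is also right: (2a) cuts out $KS$ on the $S$--$S$ corners, (2b)/(2c) collapse entries/exits, (2d) collapses full passages, and types (1) and (3) transport $I$.

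Two caveats. First, your assertions that certain generators are ``the only generators of $\wt{I}$ touching'' a given mixed corner are not literally true: for instance, a type-(3) generator all of whose paths avoid $v$ (so $\wt{p}=p$), left-multiplied by a path that exits $S$, lands in the corner $e_s\wt{\gL}e_u$; and a type-(1a) generator, multiplied by a path traversing and exiting $S$, lands in a corner between two vertices of $Q_0\setminus\{v\}$. These extra contributions do not break your isomorphisms---under your dictionary they correspond to products of generators of $I$ with paths through $v$, hence to elements of $I$---but they must be accounted for when verifying that each corner of $\wt{I}$ is \emph{exactly} the transport of the corresponding corner of $I$ together with the commutativity identifications. (Only for the corner $e_{s'}\wt{\gL}e_s$ is your ``only generators'' claim literally correct, since every path between two elements of $S$ stays inside $Q_S$.) Second, that two-sided verification is precisely the bookkeeping you defer to your last paragraph; the normal form you propose (writing the unique $S$-passage through a fixed maximal path of $Q_S$, as in Definition~\ref{def:insert}) is the right device and does close it. So I regard the proof as correct in structure, with a routine but genuinely necessary verification left schematic, and one overstated side remark to repair.
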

\begin{proof}[Sketch of proof]
Any entry $C_{ij}$ of the Cartan matrix of an algebra $KQ/I$ equals the
dimension of the image modulo $I$ of the space of paths in $Q$ from
$j$ to $i$.

The commutativity-relations~\eqref{it:ins:commS} in
Definition~\ref{def:insert} ensure that $\wt{C}_{ss'} = (C_S)_{ss'}$
for any $s, s' \in S$.
If $u \neq v$, then the relations~\eqref{it:ins:smin} imply that
$\wt{C}_{su} = C_{vu}$ for any $s \in S_{\min}$, and this extends to any
$s \in S$ thanks to the commutativity-relations~\eqref{it:ins:commvout}.
Similar reasoning applies to $\wt{C}_{us}$ using the
relations~\eqref{it:ins:smax} and the
commutativity-relations~\eqref{it:ins:commvin}.
Finally, if $u,u' \neq v$ then $\wt{C}_{uu'} = C_{uu'}$ thanks to the relations~\eqref{it:ins:vthru}
which are well-defined due to the
commutativity-relations~\eqref{it:ins:commvthru}.
\end{proof}

\subsection{The Coxeter polynomial of an insertion}

\begin{theorem} \label{t:refined}
Let $S$ be a poset. There exist polynomials
$\phi^0_S(x), \phi^1_S(x) \in \bZ[x]$ such that
for any triangular algebra $\gL$ and any vertex $v$ in its quiver,
\begin{equation}
\label{e:pLvS}
\phi_{\gL \xleftarrow{v} S}(x) =
\phi_{\gL}(x) \cdot \phi^0_S(x) + \phi_{\gL^-}(x) \cdot \phi^1_S(x) .
\end{equation}
\end{theorem}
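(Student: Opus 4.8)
The plan is to reduce the statement to a single block-matrix determinant computation driven by Lemma~\ref{l:CLvS}, the key feature being that the two ``coupling'' blocks between the old vertices and the inserted poset have rank one.

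First I would reorder the vertices of $Q$ so that $v$ comes last and write the Cartan matrix of $\gL$ as
\[
C = \begin{pmatrix} C^- & c \\ r & 1 \end{pmatrix},
\]
where $C^-$ is the Cartan matrix of $\gL^-$, the column $c$ records the entries $C_{uv}$ and the row $r$ the entries $C_{vu}$ for $u \neq v$. By Lemma~\ref{l:CLvS}, the Cartan matrix $\wt{C}$ of $\gL \xleftarrow{v} S$, with vertices ordered as $Q_0 \setminus \{v\}$ followed by $S$, is
\[
\wt{C} = \begin{pmatrix} C^- & c\,\mathbf{1}^T \\ \mathbf{1}\,r & C_S \end{pmatrix},
\]
where $\mathbf{1}$ denotes the all-ones column of length $|S|$; the point is that \emph{every} column of the top-right block equals $c$ and every row of the bottom-left block equals $r$. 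Forming $x\wt{C} + \wt{C}^T$ and abbreviating $A = xC^- + (C^-)^T$, $B = xC_S + C_S^T$, $a = xc + r^T$, $b = xr^T + c$, the Coxeter matrix of the insertion becomes
\[
x\wt{C} + \wt{C}^T = \begin{pmatrix} A & a\,\mathbf{1}^T \\ \mathbf{1}\,b^T & B \end{pmatrix}.
\]
At this stage I would record that $\phi_{\gL^-}(x) = \det A$ and, by a one-line cofactor expansion valid over any commutative ring, $\phi_{\gL}(x) = \det\begin{pmatrix} A & a \\ b^T & x+1 \end{pmatrix} = (x+1)\det A - b^T \operatorname{adj}(A)\, a$.

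Next I would evaluate $\det(x\wt{C} + \wt{C}^T)$ by a Schur complement on the block $A$. This is legitimate over the field $\bQ(x)$ because $\det A = \phi_{\gL^-}(x)$ is a nonzero (indeed monic) polynomial, as $C^-$ is triangular with unit diagonal. The decisive simplification is that the coupling blocks factor through $\mathbf{1}$: the Schur complement is
\[
B - (\mathbf{1}\,b^T)\,A^{-1}(a\,\mathbf{1}^T) = B - \mu\,\mathbf{1}\mathbf{1}^T, \qquad \mu = b^T A^{-1} a \in \bQ(x),
\]
a genuine rank-one perturbation of $B$, since $b^T A^{-1} a$ is a scalar. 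Hence $\phi_{\gL \xleftarrow{v} S}(x) = \det A \cdot \det(B - \mu\,\mathbf{1}\mathbf{1}^T)$, and the matrix determinant lemma yields $\det(B - \mu\,\mathbf{1}\mathbf{1}^T) = \det B - \mu\,\mathbf{1}^T\operatorname{adj}(B)\,\mathbf{1}$.

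Finally I would collect terms. Writing $\psi_0 = \det B$ and $\psi_1 = \mathbf{1}^T \operatorname{adj}(B)\,\mathbf{1}$ — both manifestly lying in $\bZ[x]$ and depending only on $C_S$, hence only on $S$ — and using $\det A \cdot \mu = b^T\operatorname{adj}(A)\,a = (x+1)\phi_{\gL^-}(x) - \phi_{\gL}(x)$ from the expansion above, I obtain
\[
\phi_{\gL \xleftarrow{v} S}(x) = \phi_{\gL^-}(x)\,\psi_0 - \bigl[(x+1)\phi_{\gL^-}(x) - \phi_{\gL}(x)\bigr]\psi_1 = \phi_{\gL}(x)\,\psi_1 + \phi_{\gL^-}(x)\,\bigl[\psi_0 - (x+1)\psi_1\bigr].
\]
Setting $\phi^0_S = \psi_1$ and $\phi^1_S = \psi_0 - (x+1)\psi_1$ then proves the theorem: these identities are first derived over $\bQ(x)$, but since every term is a polynomial they hold in $\bZ[x]$, and $\psi_0,\psi_1$ were extracted purely from $C_S$, so $\phi^0_S$ and $\phi^1_S$ are independent of $\gL$ and $v$. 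The argument is essentially mechanical once the block form is fixed; the only technical point needing care is the invertibility of $A$ used in the Schur complement, which I handle by passing to $\bQ(x)$. The genuine engine of the proof is the observation, furnished by Lemma~\ref{l:CLvS}, that both coupling blocks collapse to rank one through the common column $c$ and row $r$, turning the Schur complement into a rank-one perturbation of $B$.
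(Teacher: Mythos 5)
Your proof is correct and arrives at the same polynomials, but the determinant technique is genuinely different from the paper's. The paper's own argument also starts from the block matrix supplied by Lemma~\ref{l:CLvS}, but it pivots on a chosen element $s\in S$ rather than on the $\gL^-$ block: it subtracts the $s$-th row and column of $x\wt{C}+\wt{C}^T$ from all the others, which annihilates every entry of the two coupling blocks outside their $s$-th row/column (this is where the rank-one structure enters for the paper), and then groups the terms of the permutation expansion; consequently $\phi^0_S$ and $\phi^1_S$ are \emph{defined} there by the combinatorial sums~\eqref{e:refined}, and their independence of the choice of $s$ is only noted afterwards as a corollary of the theorem. Your route --- Schur complement with respect to $A$ over $\bQ(x)$, so that the rank-one couplings $a\mathbf{1}^T$ and $\mathbf{1}b^T$ produce the rank-one perturbation $B-\mu\,\mathbf{1}\mathbf{1}^T$, finished by the matrix determinant lemma and the bordered-determinant identity $b^T\operatorname{adj}(A)\,a=(x+1)\phi_{\gL^-}-\phi_{\gL}$ --- buys closed, manifestly choice-free formulas $\phi^0_S=\mathbf{1}^T\operatorname{adj}(xC_S+C_S^T)\,\mathbf{1}$ and $\phi^1_S=\phi_S-(x+1)\phi^0_S$; the latter is exactly the paper's Eq.~\eqref{e:pLvSS}, which the paper only derives in a subsequent remark. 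The two descriptions of $\phi^0_S$ do agree: the paper's reduction is $B\mapsto EBE^T$ with $E=I-\sum_{i\neq s}e_ie_s^T$, and since $E\mathbf{1}=e_s$ and $\det E=1$, the $(s,s)$-cofactor of $EBE^T$ (the principal minor on $S'$) equals $\mathbf{1}^T\operatorname{adj}(B)\,\mathbf{1}$. What the paper's version buys in exchange is that it never leaves $\bZ[x]$ --- no invertibility of $A$ and no passage to the fraction field are needed --- and its explicit permutation formulas~\eqref{e:refined} are reusable later, e.g.\ for proving the self-reciprocity of $\phi^0_S,\phi^1_S$ directly. Your one delicate point, the invertibility of $A$, is handled correctly: $\det A=\phi_{\gL^-}$ is monic because the Cartan matrix of $\gL^-$ is unitriangular after reordering, so there is no gap.
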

\begin{proof}[Idea of proof]
Let $C_S$ be the Cartan matrix of $KS$ (recall that this is the transpose
of the incidence matrix of $S$).
Choose any element $s \in S$ and write $S'=S \setminus \{s\}$.
Let $A$ be the matrix obtained from $xC_S+C_S^T$ by subtracting the
$s$-th row from all the other rows and then subtracting the $s$-th
column from all the other columns. Define
\begin{align} \label{e:refined}
\phi^0_S(x) =
\sum_{\sigma \in \Sigma_{S'}} \sgn(\sigma)
\prod_{i \in S'} A_{i \sigma(i)} &,&
\phi^1_S(x) = 
\sum_{\substack{\sigma \in \Sigma_S \\ \sigma(s) \neq s}} 
\sgn(\sigma) \prod_{i \in S} A_{i \sigma(i)}
\end{align}
where $\Sigma_{S'}, \Sigma_S$ are the groups of permutations on $S'$
and $S$, respectively,
so $\phi^0_S(x)$ is the principal minor of $A$ on the set $S'$.

The Coxeter polynomial of $\gL \xleftarrow{v} S$ equals the determinant
$\det(x\wt{C} + \wt{C}^T)$ where $\wt{C}$ is the Cartan matrix of
$\gL \xleftarrow{v} S$,
hence it can be evaluated using techniques of determinant
expansion; by subtracting its $s$-th row from all other rows
and then the $s$-th column from all other columns,
one obtains~\eqref{e:pLvS} by invoking Lemma~\ref{l:CLvS}.
\end{proof}

\begin{rem}
It is a consequence of the theorem that the polynomials
$\phi^0_S$ and $\phi^1_S$ do not depend on the particular choice of
an element $s \in S$. Moreover, since $\det A$ equals $\det(xC_S + C_S^T)$,
one has $\phi^1_S(x) = \phi_S(x) - (x+1) \phi^0_S(x)$
(compare with~\eqref{e:pSpS0pS1} below), so~\eqref{e:pLvS} can be
restated as
\begin{equation} \label{e:pLvSS}
\phi_{\gL \xleftarrow{v} S}(x) =
\phi_{\gL}(x) \cdot \phi^0_S(x) + \phi_{\gL^-}(x) \cdot \phi_S(x)
- (x+1) \cdot \phi_{\gL^-}(x) \cdot \phi^0_S(x) .
\end{equation}
\end{rem}

\begin{defn}
The polynomials $\phi^0_S$ and $\phi^1_S$ occurring in the theorem are
called the \emph{refined Coxeter polynomials} of $S$.
\end{defn}

\begin{cor} \label{c:pXvS}
Let $S$ be a poset. Then for any poset $X$ and any element $v \in X$,
\begin{equation} \label{e:pXvS}
\phi_{X \xleftarrow{v} S}(x) = \phi_X(x) \cdot \phi^0_S(x) +
\phi_{X \setminus \{v\}}(x) \cdot \phi^1_S(x) .
\end{equation}
\end{cor}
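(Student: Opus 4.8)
The plan is to obtain this as a direct specialization of Theorem~\ref{t:refined} to the incidence algebras of posets, using the dictionary provided by Lemma~\ref{l:KXvS}. The starting point is the observation that for any poset $Y$ its incidence algebra $KY$ is a triangular algebra and that, by the convention fixed earlier in the excerpt, $\phi_Y = \phi_{KY}$. In particular the vertex $v \in X$ is literally a vertex of the Hasse quiver $Q_X$, so the insertion construction of Definition~\ref{def:insert} and hence Theorem~\ref{t:refined} apply to $\gL = KX$ at $v$.

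First I would apply Theorem~\ref{t:refined} with $\gL = KX$, which yields
\[
\phi_{KX \xleftarrow{v} S}(x) = \phi_{KX}(x)\,\phi^0_S(x) + \phi_{(KX)^-}(x)\,\phi^1_S(x) .
\]
Next I would invoke Lemma~\ref{l:KXvS}, which supplies the two identifications needed to translate both sides back into the language of posets: namely $K(X \xleftarrow{v} S) \simeq KX \xleftarrow{v} S$ and $(KX)^- = K(X \setminus \{v\})$. Since the Coxeter polynomial is invariant under isomorphism of algebras (indeed under triangulated equivalence, by the Lemma in Section~\ref{sec:Coxeter}), the first identification gives $\phi_{KX \xleftarrow{v} S} = \phi_{K(X \xleftarrow{v} S)} = \phi_{X \xleftarrow{v} S}$, while the second gives $\phi_{(KX)^-} = \phi_{K(X \setminus \{v\})} = \phi_{X \setminus \{v\}}$. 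Substituting these equalities and $\phi_{KX} = \phi_X$ into the displayed formula produces exactly~\eqref{e:pXvS}.

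There is no genuine obstacle in this corollary: it is a pure translation of Theorem~\ref{t:refined} through Lemma~\ref{l:KXvS}, and the refined polynomials $\phi^0_S, \phi^1_S$ are the \emph{same} polynomials appearing in the theorem, since they depend only on $S$ and not on the ambient algebra. The only point meriting a word of care is matching the combinatorial insertion of posets with the quiver-theoretic insertion of Definition~\ref{def:insert}, but this is precisely the content of the isomorphism $K(X \xleftarrow{v} S) \simeq KX \xleftarrow{v} S$ recorded in Lemma~\ref{l:KXvS}, which I am entitled to assume.
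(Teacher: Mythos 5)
Your proposal is correct and follows exactly the paper's own argument: the published proof reads ``Apply Theorem~\ref{t:refined} to the incidence algebra $\gL = KX$ and use Lemma~\ref{l:KXvS},'' which is precisely your specialization-plus-translation route, just stated more tersely. Your added care about matching the poset insertion with the quiver-theoretic insertion via $K(X \xleftarrow{v} S) \simeq KX \xleftarrow{v} S$ is the right (and only) point to check, and Lemma~\ref{l:KXvS} covers it.
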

\begin{proof}
Apply Theorem~\ref{t:refined} to the incidence algebra $\gL = KX$ and
use Lemma~\ref{l:KXvS}.
\end{proof}

\subsection{Expressing $\phi^0_S, \phi^1_S$ in terms of
Coxeter polynomials}

Corollary~\ref{c:pXvS} can be used to express the refined Coxeter
polynomials of a poset $S$ in terms of ordinary Coxeter polynomials
of certain posets built from $S$.

Let $\wh{S}$ be the poset obtained from $S$ by adjoining a unique maximal
element, that is, $\wh{S} = S \cup \{\wh{1}\}$ with $s < \wh{1}$ for all
$s \in S$. Then both $S$ and $\wh{S}$ can be written as insertions of $S$
into chains of one and two elements whose Coxeter polynomials are
equal to $x+1$ and $x^2+x+1$, respectively. Namely,
\begin{align*}
S = \{v\} \xleftarrow{v} S &,&
\wh{S} = \left\{v_0 < v_1 \right\} \xleftarrow{v_0} S .
\end{align*}

Applying Corollary~\ref{c:pXvS} then yields the system of equations
\begin{equation}
\label{e:pSpS0pS1}
\begin{pmatrix}
\phi_S(x) \\ \phi_{\wh{S}}(x)
\end{pmatrix}
= 
\begin{pmatrix}
x+1 & 1 \\ x^2+x+1 & x+1
\end{pmatrix}
\begin{pmatrix}
\phi^0_S(x) \\ \phi^1_S(x)
\end{pmatrix}.
\end{equation}
Solving it by
inverting the left factor of the right hand side, we get:
\begin{prop} \label{p:refbyCox}
Let $S$ be a poset. Then
\begin{equation} \label{e:refbyCox}
\begin{pmatrix}
\phi^0_S(x) \\ \phi^1_S(x)
\end{pmatrix}
= 
\frac{1}{x}
\begin{pmatrix}
x+1 & -1 \\ -(x^2+x+1) & x+1
\end{pmatrix}
\begin{pmatrix}
\phi_S(x) \\ \phi_{\wh{S}}(x)
\end{pmatrix}
\end{equation}
where $\wh{S}$ is the poset obtained from $S$ by adjoining a unique
maximal element.
\end{prop}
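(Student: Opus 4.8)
The plan is to recognize Proposition~\ref{p:refbyCox} as nothing more than the inversion of the linear system~\eqref{e:pSpS0pS1}. Writing $\Phi(x) = \bigl(\phi_S(x),\,\phi_{\wh{S}}(x)\bigr)^T$ and $\Psi(x) = \bigl(\phi^0_S(x),\,\phi^1_S(x)\bigr)^T$, that system reads $\Phi = M\Psi$ with
\[
M = \begin{pmatrix} x+1 & 1 \\ x^2+x+1 & x+1 \end{pmatrix},
\]
and the goal is simply to produce the explicit formula $\Psi = M^{-1}\Phi$.

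First I would make sure the system~\eqref{e:pSpS0pS1} is itself in place, since everything rests on it. It arises from applying Corollary~\ref{c:pXvS} to the two tautological presentations $S = \{v\} \xleftarrow{v} S$ and $\wh{S} = \{v_0 < v_1\} \xleftarrow{v_0} S$. For the first I would record $\phi_{\{v\}}(x) = x+1$ and $\phi_{\{v\}\setminus\{v\}} = \phi_{\varnothing} = 1$, yielding the top row $\phi_S = (x+1)\phi^0_S + \phi^1_S$. For the second I would compute the Cartan matrix of the two-element chain to get $\phi_{\{v_0<v_1\}}(x) = x^2+x+1$, together with $\phi_{\{v_1\}}(x) = x+1$, yielding the bottom row $\phi_{\wh{S}} = (x^2+x+1)\phi^0_S + (x+1)\phi^1_S$. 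These are precisely the two rows of~\eqref{e:pSpS0pS1}.

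The core computation is then the determinant
\[
\det M = (x+1)^2 - (x^2+x+1) = x,
\]
the one pleasant simplification in the whole argument. Since $\det M = x$ is a nonzero element of the rational function field $\bQ(x)$, the matrix $M$ is invertible there, and the cofactor (adjugate) formula gives
\[
M^{-1} = \frac{1}{x}\begin{pmatrix} x+1 & -1 \\ -(x^2+x+1) & x+1 \end{pmatrix}.
\]
Left-multiplying $\Phi = M\Psi$ by $M^{-1}$ produces exactly the asserted formula~\eqref{e:refbyCox}.

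Honestly there is no real obstacle here: once~\eqref{e:pSpS0pS1} is established the result is a mechanical $2\times 2$ inversion, and the only genuine content lies in choosing the right two instances of the insertion construction. The single point worth flagging is that $M^{-1}$ carries a factor $1/x$, so a priori~\eqref{e:refbyCox} only exhibits $\phi^0_S,\phi^1_S$ as rational functions. This is harmless: Theorem~\ref{t:refined} already guarantees $\phi^0_S,\phi^1_S \in \bZ[x]$, so the combinations $(x+1)\phi_S - \phi_{\wh{S}}$ and $-(x^2+x+1)\phi_S + (x+1)\phi_{\wh{S}}$ are automatically divisible by $x$ and the right-hand side lands in $\bZ[x]$ as required. (As a bonus, $\det M = x \neq 0$ forces the solution to be unique, which reproves that $\phi^0_S,\phi^1_S$ are well-defined.) Confirming this divisibility consistency is the only thing I would be careful to mention.
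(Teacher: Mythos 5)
Your proposal is correct and follows exactly the paper's own route: derive the system~\eqref{e:pSpS0pS1} from the two insertion presentations $S = \{v\} \xleftarrow{v} S$ and $\wh{S} = \{v_0 < v_1\} \xleftarrow{v_0} S$ via Corollary~\ref{c:pXvS}, then invert the $2\times 2$ coefficient matrix (whose determinant is $x$). Your added remark on why the factor $1/x$ is harmless, since Theorem~\ref{t:refined} already puts $\phi^0_S,\phi^1_S$ in $\bZ[x]$, is a nice touch the paper leaves implicit.
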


As a consequence, we obtain:
\begin{cor}
$\phi_{\gL \xleftarrow{v} S}$ depends only on
$\phi_{\gL}$, $\phi_{\gL^-}$, $\phi_S$ and $\phi_{\wh{S}}$.
\end{cor}

\begin{rem}
One could consider instead of $\wh{S}$ the poset obtained from $S$
by adjoining a unique \emph{minimal} element. Since both posets
have the same Coxeter polynomial,
the resulting statements would be identical to those given here.
\end{rem}

Proposition~\ref{p:refbyCox} can be used to calculate the refined
Coxeter polynomials. Table~\ref{tab:refCox} shows them for
some small posets. While $\phi_S$ is always determined by
$\phi^0_S$ and $\phi^1_S$ (see Eq.~\eqref{e:pSpS0pS1}), the last two
rows in the table show that in general $\phi_S$ does not determine
$\phi^0_S$ and $\phi^1_S$; compare with Example~\ref{ex:pXvS}.

\begin{table}
\[
\begin{array}{cccc}
S & \phi_S & \phi^0_S & \phi^1_S \vspace{3pt}\\ \hline
\varnothing & 1 & 0 & 1 \vspace{3pt}\\ 
{\bullet} & x+1 & 1 & 0 \vspace{3pt}\\ 
\xymatrix@=1.5pc{{\bullet} \ar[r] & {\bullet}}
& x^2+x+1 & x+1 & -x \vspace{3pt}\\ 
\xymatrix@=1.5pc{{\bullet} & {\bullet}}
& x^2+2x+1 & 2x+2 & -x^2-2x-1 \vspace{3pt} \\ 
\xymatrix@=1.5pc{{\bullet} \ar[r] & {\bullet} \ar[r] & {\bullet}}
& x^3+x^2+x+1 & x^2+x+1 & -x^2-x \vspace{3pt}\\
\xymatrix@=1.5pc{{\bullet} & {\bullet} \ar[l] \ar[r] & {\bullet}}
& x^3+x^2+x+1 & x^2+2x+1 & -2x^2-2x  
\end{array}
\]
\caption{The Coxeter polynomial $\phi_S$ and the refined Coxeter
polynomials $\phi^0_S$, $\phi^1_S$ for some posets $S$ shown by
their Hasse quivers.}
\label{tab:refCox}
\end{table}


Some self-reciprocal properties of the refined Coxeter polynomials are
listed below.

\begin{prop}
Let $S$ be a poset with $n$ elements. Then:
\begin{enumerate}
\renewcommand{\theenumi}{\alph{enumi}}
\item
$\deg \phi^1_S \leq n$ and $\phi^1_S(x) = x^n \phi^1_S(x^{-1})$;

\item
$\deg \phi^0_S \leq n-1$ and
$\phi^0_S(x) = x^{n-1} \phi^0_S(x^{-1})$.
\end{enumerate}
\end{prop}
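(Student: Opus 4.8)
The plan is to derive both statements directly from Proposition~\ref{p:refbyCox}, which expresses the refined Coxeter polynomials through the ordinary ones of $S$ and $\wh{S}$. Clearing the factor $1/x$ there, the two identities to work with are
\[
x\,\phi^0_S(x) = (x+1)\phi_S(x) - \phi_{\wh{S}}(x), \qquad
x\,\phi^1_S(x) = (x+1)\phi_{\wh{S}}(x) - (x^2+x+1)\phi_S(x).
\]
Since $S$ has $n$ elements and $\wh{S}$ has $n+1$, and since the incidence algebras $KS$, $K\wh{S}$ are triangular (so their Cartan matrices have determinant $1$), the general properties recorded in Section~\ref{sec:Coxeter} apply: $\phi_S$ is monic of degree $n$ with $\phi_S(x) = x^n\phi_S(x^{-1})$, and $\phi_{\wh{S}}$ is monic of degree $n+1$ with $\phi_{\wh{S}}(x) = x^{n+1}\phi_{\wh{S}}(x^{-1})$; in particular both have constant term $1$. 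Everything reduces to substituting these facts into the two displayed identities.

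First I would read off the degree bounds. In the first identity the leading terms $x^{n+1}$ coming from $(x+1)\phi_S$ and from $\phi_{\wh{S}}$ cancel, so the right-hand side has degree at most $n$, whence $\deg\phi^0_S \le n-1$. In the second identity the $x^{n+2}$ terms of $(x+1)\phi_{\wh{S}}$ and of $(x^2+x+1)\phi_S$ cancel, leaving degree at most $n+1$ and hence $\deg\phi^1_S \le n$. That the right-hand sides are actually divisible by $x$ is automatic because $\phi^0_S,\phi^1_S \in \bZ[x]$; alternatively one checks the constant terms vanish using $\phi_S(0)=\phi_{\wh{S}}(0)=1$.

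For the self-reciprocity I would substitute $x \mapsto x^{-1}$ in the same identities and rescale. In the first, replacing $x$ by $x^{-1}$ and multiplying through by $x^{n}$, the relations $\phi_S(x^{-1}) = x^{-n}\phi_S(x)$ and $\phi_{\wh{S}}(x^{-1}) = x^{-(n+1)}\phi_{\wh{S}}(x)$ regenerate exactly $(x+1)\phi_S(x) - \phi_{\wh{S}}(x) = x\,\phi^0_S(x)$ on the right, yielding $\phi^0_S(x^{-1}) = x^{-(n-1)}\phi^0_S(x)$, i.e. $\phi^0_S(x) = x^{n-1}\phi^0_S(x^{-1})$. The computation for $\phi^1_S$ is the same: the prefactors $(x+1)$ and $(x^2+x+1)$ are themselves palindromic (of degrees $1$ and $2$), and the weights $n$ and $n+1$ of the two self-reciprocal relations combine so that the bracket reappears as $x\,\phi^1_S(x)$, giving $\phi^1_S(x) = x^n\phi^1_S(x^{-1})$.

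The argument is essentially routine once Proposition~\ref{p:refbyCox} is in hand; the only point requiring care is the power-of-$x$ bookkeeping under $x\mapsto x^{-1}$. The mild subtlety is that the two invoked self-reciprocal relations carry \emph{different} weights ($x^n$ for $\phi_S$ versus $x^{n+1}$ for $\phi_{\wh{S}}$), so one must verify that, together with the palindromic prefactors $(x+1)$ and $(x^2+x+1)$, the overall powers balance to reproduce the original combinations exactly; this is precisely what forces the self-reciprocal degrees to be $n-1$ for $\phi^0_S$ and $n$ for $\phi^1_S$, consistent with the degree bounds established above.
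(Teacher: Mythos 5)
Your proof is correct and follows exactly the second route the paper's own (one-line) proof suggests: using Proposition~\ref{p:refbyCox} together with the self-reciprocity of $\phi_S$ and $\phi_{\wh{S}}$, with the degree bounds coming from the leading-term cancellations and the reciprocity from the $x \mapsto x^{-1}$ substitution. The power-of-$x$ bookkeeping you carry out is precisely the detail the paper leaves to the reader, so there is nothing to add.
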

\begin{proof}
One can either prove this directly from~\eqref{e:refined},
or use~\eqref{e:refbyCox} and the self-reciprocity
of the Coxeter polynomials $\phi_S$ and $\phi_{\wh{S}}$.
\end{proof}

\subsection{The refined Coxeter polynomials of an insertion}

Proposition~\ref{p:refbyCox} can also be used to derive
formulae for the refined Coxeter polynomials of an insertion of posets,
in analogy with the formula~\eqref{e:pXvS}.

\begin{prop} \label{p:p01XvS}
Let $S$ be a poset. Then for any poset $X$ and any element $v \in X$,
\begin{align}
\label{e:p0XvS}
\phi^0_{X \xleftarrow{v} S}(x) = \phi^0_X(x) \cdot \phi^0_S(x) +
\phi^0_{X \setminus \{v\}}(x) \cdot \phi^1_S(x)
\\
\phi^1_{X \xleftarrow{v} S}(x) = \phi^1_X(x) \cdot \phi^0_S(x) +
\phi^1_{X \setminus \{v\}}(x) \cdot \phi^1_S(x)
\end{align}
\end{prop}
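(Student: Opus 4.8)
The plan is to reduce both identities to a single matrix computation by passing through the expression of $\phi^0_T,\phi^1_T$ in terms of ordinary Coxeter polynomials given in Proposition~\ref{p:refbyCox}. Write $M=\left(\begin{smallmatrix} x+1 & 1 \\ x^2+x+1 & x+1\end{smallmatrix}\right)$ for the matrix occurring in~\eqref{e:pSpS0pS1}, so that for every poset $T$ Proposition~\ref{p:refbyCox} reads $\left(\begin{smallmatrix}\phi^0_T\\\phi^1_T\end{smallmatrix}\right)=M^{-1}\left(\begin{smallmatrix}\phi_T\\\phi_{\wh T}\end{smallmatrix}\right)$, with $M^{-1}=\tfrac1x\left(\begin{smallmatrix} x+1 & -1 \\ -(x^2+x+1) & x+1\end{smallmatrix}\right)$. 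The two formulas I want to prove are exactly the two rows of the single matrix identity
\[
\begin{pmatrix}\phi^0_{X\xleftarrow{v}S}\\ \phi^1_{X\xleftarrow{v}S}\end{pmatrix}
=\begin{pmatrix}\phi^0_X & \phi^0_{X\setminus\{v\}}\\ \phi^1_X & \phi^1_{X\setminus\{v\}}\end{pmatrix}
\begin{pmatrix}\phi^0_S\\ \phi^1_S\end{pmatrix},
\]
so it suffices to establish this one equation.

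The geometric input I would need is that adjoining a maximal element and deleting $v$ both commute with insertion, namely $\wh{X\xleftarrow{v}S}\simeq \wh{X}\xleftarrow{v}S$ and $\wh{X}\setminus\{v\}\simeq \wh{X\setminus\{v\}}$. Both follow from Lemma~\ref{l:insert} together with the description $\wh{Z}\simeq\left\{v_0<v_1\right\}\xleftarrow{v_0}Z$ used just before~\eqref{e:pSpS0pS1}: the first identity is Lemma~\ref{l:insert}(c) applied with $Y=\{v_0<v_1\}$ and $u=v_0$, while the second combines the same lemma with $\wh{X}\setminus\{v\}=\wh{X}\xleftarrow{v}\varnothing$ (Lemma~\ref{l:insert}(a)). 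Conceptually, adjoining $\wh{1}$ above all of $X\xleftarrow{v}S$ produces the same poset as first adjoining $\wh{1}$ to $X$ and then inserting $S$ at $v$, because in either order $\wh{1}$ ends up sitting above every element of $S$.

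With these identities in hand I would apply Corollary~\ref{c:pXvS} twice: once to $X\xleftarrow{v}S$, giving $\phi_{X\xleftarrow{v}S}=\phi_X\,\phi^0_S+\phi_{X\setminus\{v\}}\,\phi^1_S$, and once to $\wh{X}\xleftarrow{v}S\simeq\wh{X\xleftarrow{v}S}$, giving $\phi_{\wh{X\xleftarrow{v}S}}=\phi_{\wh X}\,\phi^0_S+\phi_{\wh{X\setminus\{v\}}}\,\phi^1_S$. Stacking these two scalar equations yields
\[
\begin{pmatrix}\phi_{X\xleftarrow{v}S}\\ \phi_{\wh{X\xleftarrow{v}S}}\end{pmatrix}
=\begin{pmatrix}\phi_X & \phi_{X\setminus\{v\}}\\ \phi_{\wh X} & \phi_{\wh{X\setminus\{v\}}}\end{pmatrix}
\begin{pmatrix}\phi^0_S\\ \phi^1_S\end{pmatrix},
\]
whose two columns are precisely the vectors $\left(\begin{smallmatrix}\phi_X\\\phi_{\wh X}\end{smallmatrix}\right)$ and $\left(\begin{smallmatrix}\phi_{X\setminus\{v\}}\\\phi_{\wh{X\setminus\{v\}}}\end{smallmatrix}\right)$.

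Finally I would multiply on the left by $M^{-1}$ and invoke Proposition~\ref{p:refbyCox} three times: on the left-hand side (for $T=X\xleftarrow{v}S$), and column-by-column on the right-hand side (for $T=X$ and $T=X\setminus\{v\}$). Each such application sends a vector $\left(\begin{smallmatrix}\phi_T\\\phi_{\wh T}\end{smallmatrix}\right)$ to $\left(\begin{smallmatrix}\phi^0_T\\\phi^1_T\end{smallmatrix}\right)$, so the displayed matrix identity drops out, and with it both formulas. The only genuine content lies in the pair of poset identities of the second step; once the ``hat'' operation and deletion of $v$ are seen to commute with insertion, the remainder is the formal observation that Proposition~\ref{p:refbyCox} is a fixed change of basis applied simultaneously to each column, so I expect the verification of $\wh{X\xleftarrow{v}S}\simeq\wh{X}\xleftarrow{v}S$ to be the main (and only mild) obstacle.
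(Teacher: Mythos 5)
Your proposal is correct and takes essentially the same route as the paper's own sketch: apply Corollary~\ref{c:pXvS} to both $X \xleftarrow{v} S$ and $\wh{X} \xleftarrow{v} S$, verify $\wh{X} \xleftarrow{v} S \simeq \wh{X \xleftarrow{v} S}$ and $\wh{X} \setminus \{v\} \simeq \wh{X \setminus \{v\}}$ via Lemma~\ref{l:insert}, and then invoke Proposition~\ref{p:refbyCox} for the three posets $X$, $X \setminus \{v\}$ and $X \xleftarrow{v} S$. Your explicit matrix packaging is precisely the paper's key observation that the change-of-basis matrix in~\eqref{e:refbyCox} is independent of the poset.
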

\begin{proof}[Sketch of proof]
The key observation is that the coefficients occurring in
Eq.~\eqref{e:refbyCox} are independent on the particular poset.
Thus one can use Corollary~\ref{c:pXvS} to express $\phi_{X \xleftarrow{v} S}$
and $\phi_{\wh{X} \xleftarrow{v} S}$, then 
verify that $\wh{X} \xleftarrow{v} S \simeq \wh{X \xleftarrow{v} S}$
and $\wh{X} \setminus \{v\} \simeq \wh{X \setminus \{v\}}$
using Lemma~\ref{l:insert}
and finally use Proposition~\ref{p:refbyCox} for each of the three posets
$X$, $X \setminus \{v\}$ and $X \xleftarrow{v} S$.
\end{proof}

\subsection{Cases where $\phi^0_S$ is a Coxeter polynomial}

If the refined Coxeter polynomial $\phi^0_S$ equals the Coxeter polynomial
of some finite-dimensional algebra $\Gamma$, one can use~\eqref{e:pLvSS} in
order to write the Coxeter polynomial of any insertion
$\gL \xrightarrow{v} S$ in terms of ordinary Coxeter polynomials, namely
\[
\phi_{\gL \xleftarrow{v} S}(x) =
\phi_{\gL}(x) \cdot \phi_{\Gamma}(x) + \phi_{\gL^-}(x) \cdot \phi_S(x)
- (x+1) \cdot \phi_{\gL^-}(x) \cdot \phi_{\Gamma}(x) .
\]

An inspection of Table~\ref{tab:refCox} reveals that while
this does not hold for all posets, there are still some posets having
this property. Here is a result in this direction.

\begin{prop} \label{p:p0S1}
Let $S$ be a poset. Then $\phi^0_{\wh{S}}(x) = \phi_S(x)$.
\end{prop}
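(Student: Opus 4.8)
The plan is to use Proposition~\ref{p:refbyCox} as the main tool, applied to the poset $\wh{S}$ in place of $S$. Writing out~\eqref{e:refbyCox} for $\wh{S}$ gives
\[
\phi^0_{\wh{S}}(x) = \frac{1}{x}\bigl( (x+1)\phi_{\wh{S}}(x) - \phi_{\wh{\wh{S}}}(x) \bigr),
\]
so the whole statement reduces to understanding the two Coxeter polynomials $\phi_{\wh{S}}$ and $\phi_{\wh{\wh{S}}}$, where $\wh{\wh{S}} = (\wh{S})\,\wh{\ }$ is the poset obtained from $S$ by adjoining two successive maximal elements. The target identity $\phi^0_{\wh{S}}(x) = \phi_S(x)$ is then equivalent to the polynomial relation
\[
(x+1)\phi_{\wh{S}}(x) - \phi_{\wh{\wh{S}}}(x) = x\,\phi_S(x).
\]

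First I would express $\phi_{\wh{S}}$ in terms of the data of $S$. Since $\wh{S} = \{v_0 < v_1\} \xleftarrow{v_0} S$, as already noted in the excerpt right before~\eqref{e:pSpS0pS1}, the second component of that system gives
\[
\phi_{\wh{S}}(x) = (x^2+x+1)\phi^0_S(x) + (x+1)\phi^1_S(x).
\]
Next I would compute $\phi_{\wh{\wh{S}}}$ the same way. The key observation is that $\wh{\wh{S}}$ is again an insertion of $S$: writing $Y = \{v_0 < v_1 < v_2\}$ (a chain with three elements), one has $\wh{\wh{S}} = Y \xleftarrow{v_0} S$, since adjoining the unique maximal element $\wh{1}$ twice is the same as prepending the chain $v_1 < v_2$ above all of $S$. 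Corollary~\ref{c:pXvS} then gives
\[
\phi_{\wh{\wh{S}}}(x) = \phi_Y(x)\cdot\phi^0_S(x) + \phi_{Y \setminus \{v_0\}}(x)\cdot\phi^1_S(x),
\]
where $Y \setminus \{v_0\}$ is the chain $\overrightarrow{A_2}$ on $\{v_1, v_2\}$. The Coxeter polynomials of these short chains are standard: $\phi_{Y} = \phi_{\overrightarrow{A_3}} = x^3+x^2+x+1$ and $\phi_{Y\setminus\{v_0\}} = \phi_{\overrightarrow{A_2}} = x^2+x+1$.

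Finally I would substitute both expressions into $(x+1)\phi_{\wh{S}} - \phi_{\wh{\wh{S}}}$ and collect the coefficients of $\phi^0_S$ and $\phi^1_S$. The coefficient of $\phi^1_S$ becomes $(x+1)^2 - (x^2+x+1) = x$, and the coefficient of $\phi^0_S$ becomes $(x+1)(x^2+x+1) - (x^3+x^2+x+1) = x^2+x = x(x+1)$; hence the combination equals $x\bigl((x+1)\phi^0_S(x) + \phi^1_S(x)\bigr)$, which by the first row of~\eqref{e:pSpS0pS1} is precisely $x\,\phi_S(x)$. Dividing by $x$ yields $\phi^0_{\wh{S}}(x) = \phi_S(x)$, as desired. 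I do not anticipate a serious obstacle here: the argument is a short linear-algebra manipulation once the two Coxeter polynomials are written via Corollary~\ref{c:pXvS}. The only point requiring mild care is the identification $\wh{\wh{S}} = Y \xleftarrow{v_0} S$ and the correct bookkeeping of the small chain polynomials; everything else is routine coefficient arithmetic.
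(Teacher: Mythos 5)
Your proof is correct, but it takes a genuinely different route from the paper's. The paper proves this statement in one line by invoking Proposition~\ref{p:p01XvS}: applying Eq.~\eqref{e:p0XvS} to the insertion $\wh{S} = \{v_0 < v_1\} \xleftarrow{v_0} S$ gives $\phi^0_{\wh{S}} = \phi^0_{\bullet \to \bullet}\,\phi^0_S + \phi^0_{\bullet}\,\phi^1_S = (x+1)\phi^0_S + \phi^1_S$, which is exactly the right-hand side of the first row of~\eqref{e:pSpS0pS1}, i.e.\ $\phi_S$. You instead bypass Proposition~\ref{p:p01XvS} entirely: you apply Proposition~\ref{p:refbyCox} to $\wh{S}$, identify $\wh{\wh{S}}$ as the insertion of $S$ at the bottom of a $3$-chain, expand both $\phi_{\wh{S}}$ and $\phi_{\wh{\wh{S}}}$ via~\eqref{e:pSpS0pS1} and Corollary~\ref{c:pXvS}, and finish with coefficient arithmetic (which I checked: the coefficients $x(x+1)$ and $x$ are right, and the chain polynomials $x^3+x^2+x+1$ and $x^2+x+1$ are correct). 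In effect you have inlined, for the special case $X = \{v_0 < v_1\}$, the very argument the paper only sketches in its proof of Proposition~\ref{p:p01XvS}, which likewise rests on the identification $\wh{X} \xleftarrow{v} S \simeq \wh{X \xleftarrow{v} S}$ together with Proposition~\ref{p:refbyCox}. What the paper's route buys is brevity, given the machinery already in place; what your route buys is self-containedness --- it depends only on Corollary~\ref{c:pXvS} and Proposition~\ref{p:refbyCox}, whose derivations are given in full, rather than on a proposition whose proof is only sketched --- at the cost of a somewhat longer computation and the extra bookkeeping of $\wh{\wh{S}}$.
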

\begin{proof}
Compare~\eqref{e:p0XvS} for
$X = \xymatrix@1@=1.5pc{{\bullet} \ar[r] & {\bullet}}$
and the first row of~\eqref{e:pSpS0pS1}.
\end{proof}

As a consequence, one can write the Coxeter polynomial of an
insertion of $\wh{S}$ in terms of ordinary Coxeter polynomials.

\begin{cor} \label{cor:insertS1}
For a poset $S$, a triangular algebra $\gL$ and a vertex in $v$ its quiver,
\[
\phi_{\gL \xleftarrow{v} \wh{S}}(x) = 
\phi_{\gL}(x) \cdot \phi_S(x) + \phi_{\gL^-}(x) \cdot \phi_{\wh{S}}(x)
- (x+1) \cdot \phi_{\gL^-}(x) \cdot \phi_S(x) .
\]
\end{cor}

By specializing to posets and slightly changing the notation, we get:
\begin{cor} \label{cor:insertSs}
Assume that $S$ is a poset with a unique maximal element $\wh{1}$.
Then for any poset $X$ and any element $v \in X$,
\[
\phi_{X \xleftarrow{v} S}(x) = 
\phi_{X}(x) \cdot \phi_{S \setminus \{\wh{1}\}}(x) +
\phi_{X \setminus \{v\}}(x) \cdot \phi_S(x) 
-(x+1) \cdot \phi_{X \setminus \{v\}}(x) \cdot \phi_{S \setminus \{\wh{1}\}}(x)
\]
\end{cor}
\begin{proof}
Use Corollary~\ref{cor:insertS1} for the poset $S \setminus \{\wh{1}\}$ and
the algebra $\gL=KX$.
\end{proof}
The above results generalize an analogous formula for the Coxeter polynomial
of a coalescence of trees, which we recall in the next section.

\subsection{Coalescence of trees}

If $T$ is a \emph{forest}, i.e.\ a (finite) graph that is a disjoint union
of trees, then any orientation $\overrightarrow{T}$ of the edges of $T$
gives rise to a quiver which forms the Hasse diagram of a poset
(still denoted $\overrightarrow{T}$) whose
incidence algebra is isomorphic to the path algebra of that quiver and
moreover its derived equivalence class is independent on the
particular orientation. Hence its Coxeter polynomial depends only on
the underlying graph $T$ and we may denote it by $\phi_T(x)$.

A \emph{rooted tree} is a pair $(T,v)$ consisting of a tree $T$ and a
vertex $v \in T$. Denote by $T \setminus \{v\}$ the graph obtained from
$T$ by deleting the vertex $v$ and all the incident edges.
It is a disjoint union of trees.
If $(T_1, v_1)$ and $(T_2, v_2)$ are rooted trees, their
\emph{coalescence} is the rooted tree $(T_1 \cdot T_2, v)$ obtained
from the disjoint union of $T_1$ and $T_2$ by identifying the vertices
$v_1$ and $v_2$ (which become the vertex $v$).
This notion plays a key role in the constructions of isospectral
trees~\cite{Schwenk73}.

The next result expresses the Coxeter polynomial of the coalescence
$T_1 \cdot T_2$ in terms of the Coxeter polynomials of $T_i$ and
$T_i \setminus \{v_i\}$ for $i=1,2$, see~\cite[\S5.2]{Lenzing99}.
\begin{prop} \label{p:coal}
Let $(T_1, v_1)$ and $(T_2, v_2)$ be rooted trees. Then
\[
\phi_{T_1 \cdot T_2}(x) =
\phi_{T_1}(x) \cdot \phi_{T_2 \setminus \{v_2\}}(x) +
\phi_{T_1 \setminus \{v_1\}}(x) \cdot \phi_{T_2}(x)
-(x+1) \cdot \phi_{T_1 \setminus \{v_1\}}(x) \cdot
\phi_{T_2 \setminus \{v_2\}}(x) .
\]
\end{prop}

This result is a special case of~\cite[Theorem~2.2]{Boldt95}.
Alternatively, one can use an analogous formula
involving the characteristic polynomials of the adjacency matrices
occurring in the spectral theory of graphs
\cite[Theorem~2.2.3]{CvRoSi10} and then apply a result of
A'Campo~\cite{ACampo76} (cf.\ Proposition~\ref{p:bipartite} below)
relating these polynomials with the Coxeter polynomials.

We give a new proof based on Corollary~\ref{cor:insertSs}.
First, we note the following:
\begin{lemma}
Let $(T,v)$ be a rooted tree.
\begin{enumerate}
\renewcommand{\theenumi}{\alph{enumi}}
\item
If $\overrightarrow{T}$ is an orientation such that $v$ is a sink or
a source, then the induced orientation on the graph $T \setminus \{v\}$
is the Hasse diagram of the poset $\overrightarrow{T} \setminus \{v\}$.

\item
There exists a unique orientation $\overrightarrow{T}$ such that
$v$ becomes a unique maximal (respectively, minimal) element of the
corresponding poset.
\end{enumerate}
\end{lemma}

\begin{lemma} \label{l:coalHasse}
Let $(T_1, v_1)$ and $(T_2, v_2)$ be rooted trees.
If $\overrightarrow{T_1}$ is the orientation of $T_1$ making $v_1$
into a unique minimal element
and $\overrightarrow{T_2}$ is the orientation of $T_2$ making $v_2$
into a unique maximal element,
then the induced orientation on the coalescence $T_1 \cdot T_2$
is the Hasse diagram of the insertion
$\overrightarrow{T_1} \xleftarrow{v_1} \overrightarrow{T_2}$.
\end{lemma}

Now Proposition~\ref{p:coal} becomes a special case of 
Corollary~\ref{cor:insertSs}. Indeed, consider the orientations
$\overrightarrow{T_1}$ and $\overrightarrow{T_2}$ as in
Lemma~\ref{l:coalHasse}, take the poset $S$ to be $\overrightarrow{T_2}$
with its unique maximal element $v_2$ and the poset $X$
to be $\overrightarrow{T_1}$ with the element $v_1$.

\section{Applications}
\label{sec:Apps}

\subsection{Symmetry for ordinal sums of posets}
Our first application concerns ordinal sums;
we will derive a 
formula for the Coxeter polynomial of an ordinal sum of posets
in terms of their refined Coxeter polynomials and
deduce several consequences.

It is known~\cite[Corollary~4.15]{Ladkani08a}
that if $X_1$ and $X_2$ are two posets,
then the incidence algebras of the ordinal sums $X_1 \oplus X_2$
and $X_2 \oplus X_1$ are derived equivalent, and hence
$\phi_{X_1 \oplus X_2} = \phi_{X_2 \oplus X_1}$. However, there
are examples (e.g.\ \cite[Example~4.20]{Ladkani08a})
showing that for three posets,
the incidence algebras of the ordinal sums
$X_1 \oplus X_2 \oplus X_3$ and $X_2 \oplus X_1 \oplus X_3$ are
in general not derived equivalent. Nevertheless, it is interesting
to ask whether their Coxeter polynomials still coincide.

We start by stating a generalization of Corollary~\ref{c:pXvS} 
and Proposition~\ref{p:p01XvS} for multiple insertions.

\begin{prop} \label{p:multinsert}
Let $Y$ be a poset and $y_1, y_2, \dots, y_n \in Y$ be pairwise distinct
elements. Let $X_1, X_2, \dots, X_n$ be posets.
Then
\[
\phi^{\star}_{(\dots((Y \xleftarrow{y_1} X_1) \xleftarrow{y_2} X_2) \dots)
\xleftarrow{y_n} X_n}(x) =
\sum_{I \subseteq \{1,\dots,n\}}
\phi^{\star}_{Y \setminus \{y_i\}_{i \in I}}(x)
\cdot
\prod_{i=1}^{n} \phi^{\eps_i(I)}_{X_i}(x)
\]
where $\star$ can be $0, 1$ or empty referring to $\phi^0, \phi^1$ or
$\phi$ respectively, and
\[
\eps_i(I) = \begin{cases}
1 & \text{if $i \in I$,} \\
0 & \text{if $i \not \in I$.}
\end{cases}
\]
\end{prop}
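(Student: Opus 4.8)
The plan is to argue by induction on $n$, peeling off the final insertion each time and invoking the uniform one-step recursion that already appears in the excerpt. The crucial observation making a single induction work for all three choices of $\star$ at once is that Corollary~\ref{c:pXvS} (for empty $\star$) and Proposition~\ref{p:p01XvS} (for $\star = 0,1$) can be written in one common form: for any poset $W$, element $w \in W$, and poset $T$,
\[
\phi^{\star}_{W \xleftarrow{w} T}(x) =
\phi^{\star}_{W}(x) \cdot \phi^0_{T}(x)
+ \phi^{\star}_{W \setminus \{w\}}(x) \cdot \phi^1_{T}(x) .
\]
The leading poset carries the superscript $\star$, while the inserted poset $T$ always contributes the fixed pair $\phi^0_T, \phi^1_T$ regardless of $\star$. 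This is precisely the structure that propagates through the induction.

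The base case $n=1$ is exactly the displayed recursion applied to $W = Y$, $w = y_1$, $T = X_1$: the right-hand side of the proposition is the two-term sum over $I \subseteq \{1\}$, namely $\phi^{\star}_{Y}\cdot\phi^0_{X_1} + \phi^{\star}_{Y \setminus \{y_1\}}\cdot\phi^1_{X_1}$, which matches the recursion after reading $\phi^{\eps_1(\varnothing)}_{X_1} = \phi^0_{X_1}$ and $\phi^{\eps_1(\{1\})}_{X_1} = \phi^1_{X_1}$. For the inductive step I would write $Z = (\dots(Y \xleftarrow{y_1} X_1)\dots)\xleftarrow{y_{n-1}} X_{n-1}$ and apply the recursion with $W = Z$, $w = y_n$, $T = X_n$, giving $\phi^{\star}_{Z \xleftarrow{y_n} X_n} = \phi^{\star}_Z \cdot \phi^0_{X_n} + \phi^{\star}_{Z \setminus \{y_n\}} \cdot \phi^1_{X_n}$. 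Here I must first check that $y_n$ is still a vertex of $Z$, which holds because the insertions at $y_1, \dots, y_{n-1}$ only alter those (distinct) vertices.

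The key preliminary identification is that $Z \setminus \{y_n\}$ is the multiple insertion of $X_1, \dots, X_{n-1}$ into $Y \setminus \{y_n\}$ at $y_1, \dots, y_{n-1}$. I would obtain this from Lemma~\ref{l:insert}: writing $Z \setminus \{y_n\} = Z \xleftarrow{y_n} \varnothing$ by part~(a), then commuting the empty insertion at $y_n$ past all the insertions at $y_1, \dots, y_{n-1}$ by part~(b), since $y_n$ is distinct from each $y_i$, and finally collapsing $Y \xleftarrow{y_n} \varnothing = Y \setminus \{y_n\}$ again by part~(a). With this identification I can apply the inductive hypothesis to both $\phi^{\star}_Z$ and $\phi^{\star}_{Z \setminus \{y_n\}}$, obtaining two sums indexed by $J \subseteq \{1,\dots,n-1\}$; in the second, the poset appearing is $Y \setminus (\{y_n\} \cup \{y_j\}_{j \in J})$.

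The final step is purely combinatorial bookkeeping: after substituting, the $\phi^0_{X_n}$-terms are indexed by subsets $I = J$ with $n \notin I$ (so $\phi^0_{X_n} = \phi^{\eps_n(I)}_{X_n}$ and $\prod_{i=1}^{n-1}\phi^{\eps_i(J)}_{X_i}\cdot\phi^0_{X_n} = \prod_{i=1}^{n}\phi^{\eps_i(I)}_{X_i}$), while the $\phi^1_{X_n}$-terms are indexed by $I = J \cup \{n\}$ with $n \in I$ (so $\phi^1_{X_n} = \phi^{\eps_n(I)}_{X_n}$ and the deleted set is $\{y_i\}_{i \in I}$). Partitioning all $I \subseteq \{1,\dots,n\}$ according to whether $n \in I$ reassembles exactly the claimed single sum. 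I expect the main obstacle to be not any single hard estimate but the care needed in the two invocations of Lemma~\ref{l:insert} to pin down $Z \setminus \{y_n\}$ correctly, together with keeping the index-set bookkeeping in the final merge transparent; everything else follows formally from the uniformity of the one-step recursion.
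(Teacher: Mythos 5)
Your proof is correct and takes essentially the same route as the paper's own argument: induction on $n$, with the base case given by Corollary~\ref{c:pXvS} (for $\star$ empty) and Proposition~\ref{p:p01XvS} (for $\star=0,1$), and with the inductive step resting on the fact that insertion commutes with removal of a distinct element, deduced from Lemma~\ref{l:insert}. Your write-up simply makes explicit the details the paper's sketch leaves implicit (the uniform one-step recursion for all three choices of $\star$ and the final index-set bookkeeping), so no further changes are needed.
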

\begin{proof}[Sketch of proof]
By induction on $n$, the case $n=1$ being Corollary~\ref{c:pXvS}
or Proposition~\ref{p:p01XvS}.
For the induction step,
one has to verify that insertions ``commute'' with removal of elements,
which is a consequence of Lemma~\ref{l:insert}.
\end{proof}

By viewing an ordinal sum as a multiple insertion according to
Lemma~\ref{l:ordsum}, we derive the following formula for its
Coxeter polynomial and refined Coxeter polynomials.
Denote by $v_m(x)$ the polynomial $(x^m-1)/(x-1)$ for $m \geq 0$.
Set also $v_{-1}(x) = -x^{-1}$.
If $X_1, X_2, \dots, X_n$ is a sequence of posets,
then for any $0 \leq k \leq n$
we can form the following sum running over certain products of their
refined Coxeter polynomials
\[
\Phi_k(x) = \sum_{\substack{\eps \in \{0,1\}^n \\ |\eps|=k}} 
\phi^{\eps_1}_{X_1}(x) \cdot \phi^{\eps_2}_{X_2}(x) \cdot \ldots \cdot
\phi^{\eps_n}_{X_n}(x)
\]
where $|\eps| = \eps_1 + \eps_2 + \dots + \eps_n$ for
$\eps=(\eps_1, \eps_2, \dots, \eps_n) \in \{0,1\}^n$.
Obviously, $\Phi_k(x)$ does not depend on the order of the posets
in the sequence.

\begin{prop} \label{p:ordsum}
Let $X_1, X_2, \dots, X_n$ be posets and let
$X = X_1 \oplus X_2 \oplus \dots \oplus X_n$ be their ordinal sum.
Then
\[
\phi_X(x) = 
\sum_{k=0}^n v_{n+1-k}(x) \cdot \Phi_k(x)
\]
and
\begin{align*}
\phi^0_X(x) = 
\sum_{k=0}^n v_{n-k}(x) \cdot \Phi_k(x), &&
\phi^1_X(x) = 
\sum_{k=0}^n (-x) \cdot v_{n-1-k}(x) \cdot \Phi_k(x) .
\end{align*}
\end{prop}
\begin{proof}
Proposition~\ref{p:multinsert} and Lemma~\ref{l:ordsum} yield the formula
\[
\phi^{\star}_{X_1 \oplus X_2 \oplus \dots \oplus X_n}(x) = 
\sum_{I \subseteq \{1,\dots,n\}}
\phi^{\star}_{\overrightarrow{A_n} \setminus I}(x)
\cdot
\prod_{i=1}^{n} \phi^{\eps_i(I)}_{X_i}(x)
\]
where $\star$ can be $0, 1$ or empty.
The crucial point is that the poset $\overrightarrow{A_n} \setminus I$
depends only on the cardinality of $I$ (namely
$\overrightarrow{A_n} \setminus I \simeq \overrightarrow{A}_{n-|I|}$),
so one can rearrange the summation according to this cardinality.
The explicit expressions for the (refined) Coxeter polynomials
of a chain with $m$ elements
\begin{align*}
\phi_{\overrightarrow{A_m}}(x) = v_{m+1}(x) &,&
\phi^0_{\overrightarrow{A_m}}(x) = v_m(x) &,&
\phi^1_{\overrightarrow{A_m}}(x) = -x \cdot v_{m-1}(x)
\end{align*}
can be verified by induction on $m \geq 0$, using Eq.~\eqref{e:pSpS0pS1} and Proposition~\ref{p:p0S1}. These polynomials for $m \leq 3$ are also shown in
Table~\ref{tab:refCox}.
\end{proof}

We list a few consequences of Proposition~\ref{p:ordsum}.

\begin{cor} \label{cor:ordsum} {\ }
\begin{enumerate}
\renewcommand{\theenumi}{\alph{enumi}}
\item
The Coxeter polynomial and the refined Coxeter polynomials
of an ordinal sum do not depend on the order of summands.

\item
Let $X_1, X_2, \dots, X_n$ and $Y_1, Y_2, \dots, Y_n$ be posets.
Denote their ordinal sums by
$X = X_1 \oplus X_2 \oplus \dots \oplus X_n$ and
$Y = Y_1 \oplus Y_2 \oplus \dots \oplus Y_n$.
If $\phi_{X_i} = \phi_{Y_i}$ and $\phi_{\wh{X_i}} = \phi_{\wh{Y_i}}$
for all $1 \leq i \leq n$, then $\phi_X = \phi_Y$
and $\phi_{\wh{X}} = \phi_{\wh{Y}}$.
\end{enumerate}
\end{cor}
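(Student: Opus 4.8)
The plan is to observe that the proof of Corollary~\ref{c:ordsum} produces, for the ordinary Coxeter polynomial, a formula in which the dependence on the summands is carried entirely by the symmetric functions
\[
\sigma_k(x) = \sum_{\substack{\eps \in \{0,1\}^n \\ |\eps|=k}} \prod_{i=1}^n \phi^{\eps_i}_{X_i}(x) ,
\]
while the coefficients $\frac{x^{n+1-k}-1}{x-1} = \phi_{\overrightarrow{A}_{n-k}}(x)$ are independent of the $X_i$. The same argument applies verbatim with $\star \in \{0,1\}$: applying Proposition~\ref{p:multinsert} to $Y = \overrightarrow{A_n}$ and using Lemma~\ref{l:ordsum} together with $\overrightarrow{A_n} \setminus I \simeq \overrightarrow{A}_{n-|I|}$ yields
\[
\phi^\star_{X_1 \oplus \dots \oplus X_n}(x) = \sum_{k=0}^n \phi^\star_{\overrightarrow{A}_{n-k}}(x) \cdot \sigma_k(x), \qquad \star \in \{0,1\} .
\]
Thus all three invariants $\phi$, $\phi^0$, $\phi^1$ of the ordinal sum are expressed through the very same symmetric functions $\sigma_k$.

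Part (a) is then immediate: each $\sigma_k$ is a symmetric function of the pairs $(\phi^0_{X_i}, \phi^1_{X_i})$, hence unchanged under any permutation of the summands; since the coefficients $\phi^\star_{\overrightarrow{A}_{n-k}}$ do not involve the summands at all, each of $\phi$, $\phi^0$, $\phi^1$ of the ordinal sum is invariant under reordering.

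For part (b) I would first pass from the given data to the refined polynomials. By Proposition~\ref{p:refbyCox} the refined Coxeter polynomials $\phi^0_{X_i}, \phi^1_{X_i}$ are determined by $\phi_{X_i}$ and $\phi_{\wh{X_i}}$; hence the hypotheses $\phi_{X_i} = \phi_{Y_i}$ and $\phi_{\wh{X_i}} = \phi_{\wh{Y_i}}$ give $\phi^0_{X_i} = \phi^0_{Y_i}$ and $\phi^1_{X_i} = \phi^1_{Y_i}$ for every $i$. Consequently the symmetric functions $\sigma_k$ attached to $X_1, \dots, X_n$ and to $Y_1, \dots, Y_n$ coincide, so the displayed formulas give $\phi^0_X = \phi^0_Y$ and $\phi^1_X = \phi^1_Y$. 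Finally, reading the linear relation~\eqref{e:pSpS0pS1} in the forward direction for both $X$ and $Y$ recovers $\phi_X = \phi_Y$ and $\phi_{\wh{X}} = \phi_{\wh{Y}}$ from the equality of their refined polynomials.

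The argument is essentially bookkeeping and I do not anticipate a genuine obstacle. The one point requiring a little care is the derivation of the refined formula above: one must check that $\phi^\star_{\overrightarrow{A_n} \setminus I}$ depends only on $|I|$, which holds because deleting any subset of a chain again yields a chain, exactly as in the proof of Corollary~\ref{c:ordsum}. The second mild point is recognizing that Proposition~\ref{p:refbyCox} and equation~\eqref{e:pSpS0pS1} together record the two-way invertible correspondence between the pair $(\phi_S, \phi_{\wh{S}})$ and the pair $(\phi^0_S, \phi^1_S)$; this is what lets us translate the hypotheses into statements about refined polynomials and then translate the conclusion back.
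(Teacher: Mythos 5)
Your proposal is correct and follows essentially the same route the paper intends: it extends the formula of Corollary~\ref{c:ordsum} to $\phi^{\star}$ with $\star\in\{0,1\}$ via Proposition~\ref{p:multinsert} and the observation that $\overrightarrow{A_n}\setminus I$ depends only on $|I|$, then uses the invertible correspondence between $(\phi_S,\phi_{\wh{S}})$ and $(\phi^0_S,\phi^1_S)$ from Proposition~\ref{p:refbyCox} and Eq.~\eqref{e:pSpS0pS1} to translate the hypotheses and conclusions. This is exactly the content behind the paper's remark that the corollary follows from Corollary~\ref{c:ordsum} ``and its proof.''
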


We remark that
as demonstrated by Example~\ref{ex:pXvS}, the conditions
$\phi_{X_i} = \phi_{Y_i}$ alone are not sufficient to imply the
equality $\phi_X = \phi_Y$.

\subsection{New algebras of cyclotomic type}

Another application of refined Coxeter polynomials concerns the construction
of new families of algebras of cyclotomic type.

\begin{defn}
A triangular algebra $\gL$ is of \emph{cyclotomic type} if $\phi_\gL$ is a
product of cyclotomic polynomials.
\end{defn}

A systematic study of algebras of cyclotomic type has been initiated
by de la Pe\~{n}a in the paper~\cite{delaPena14}. This paper surveys
several families of algebras known to be of cyclotomic type. These
include:
\begin{itemize}
\item
The path algebras of Dynkin and Euclidean quivers;

\item
The canonical algebras~\cite{Lenzing99,Ringel84};

\item
Some supercanonical~\cite{LenzingdelaPena04} and
extended canonical~\cite{LenzingdelaPena11} algebras;

\item
Algebras that are fractionally Calabi-Yau; and hence 
by~\cite{HerschendIyama11},

\item
$n$-representation-finite algebras.
\end{itemize}
Moreover, tensor products of algebras of cyclotomic type are of cyclotomic type.

The next lemma shows a multiplicative connection between the Coxeter
polynomial of $\gL \xleftarrow{v} S$ and that of $\gL^-$ when we insert a
poset $S$ with $\phi^0_S = 0$.

\begin{lemma} \label{l:p0S}
Let $S$ be a poset. Then:
\begin{enumerate}
\renewcommand{\theenumi}{\alph{enumi}}
\item
$\phi^0_S(x) = 0
\,\Longleftrightarrow\,
\phi^1_S(x)=\phi_S(x)
\,\Longleftrightarrow\,
\phi_{\wh{S}}(x) = (x+1)\phi_S(x) = \phi_{S+\{\bullet\}}(x)$.

\item
If any of the above equivalent conditions holds, then
$\phi_{\gL \xleftarrow{v} S}(x) = \phi_{\gL^-}(x) \cdot \phi_S(x)$
for any triangular algebra $\gL$ and any vertex $v$ in its quiver.
\end{enumerate}
\end{lemma}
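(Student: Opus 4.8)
The plan is to obtain part~(a) entirely from the linear relations among the four polynomials $\phi_S$, $\phi_{\wh{S}}$, $\phi^0_S$, $\phi^1_S$ already recorded in the excerpt, and then to read off part~(b) as a one-line specialization of Theorem~\ref{t:refined}.

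For the equivalence $\phi^0_S = 0 \Leftrightarrow \phi^1_S = \phi_S$, I would invoke the identity $\phi^1_S = \phi_S - (x+1)\phi^0_S$ from the remark following Theorem~\ref{t:refined}; rearranged as $(x+1)\phi^0_S = \phi_S - \phi^1_S$, it shows at once that $\phi^0_S$ vanishes exactly when $\phi^1_S = \phi_S$. For the remaining equivalences I would use the first component of~\eqref{e:refbyCox}, namely $x\,\phi^0_S = (x+1)\phi_S - \phi_{\wh{S}}$. As $\phi^0_S$ is a polynomial, it vanishes precisely when the right-hand side does, i.e.\ when $\phi_{\wh{S}} = (x+1)\phi_S$.

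To close the chain I would identify $(x+1)\phi_S$ with $\phi_{S+1}$, where $S+1$ denotes the disjoint union of $S$ with a single point. The incidence algebra of $S+1$ is the product $KS \times K$, so its Cartan matrix $C'$ is block diagonal with blocks $C_S$ and $(1)$; hence $x\,C' + (C')^T$ is likewise block diagonal and its determinant factors as $\phi_S \cdot (x+1)$, giving $\phi_{S+1} = (x+1)\phi_S$ and therefore $\phi_{\wh{S}} = (x+1)\phi_S = \phi_{S+1}$.

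Part~(b) then follows immediately: assuming $\phi^0_S = 0$, equivalently $\phi^1_S = \phi_S$, the formula~\eqref{e:pLvS} degenerates to $\phi_{\gL \xleftarrow{v} S} = \phi_{\gL}\cdot 0 + \phi_{\gL^-}\cdot\phi^1_S = \phi_{\gL^-}\cdot\phi_S$. Since every ingredient is already at hand, I anticipate no genuine obstacle; the only point that needs an argument beyond the quoted linear identities is the multiplicativity $\phi_{S+1} = (x+1)\phi_S$, which reduces to the block-diagonal structure of the Cartan matrix of a disjoint union.
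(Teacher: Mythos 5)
Your proposal is correct and follows essentially the same route as the paper: part~(a) is extracted from the linear relations among $\phi_S$, $\phi_{\wh{S}}$, $\phi^0_S$, $\phi^1_S$ (the paper cites the system~\eqref{e:pSpS0pS1}, you use its equivalent inverted form~\eqref{e:refbyCox} together with the remark's identity), and part~(b) is the same one-line specialization of Eq.~\eqref{e:pLvS}. Your explicit block-diagonal verification that $\phi_{S+1}=(x+1)\phi_S$ is a correct filling-in of a standard fact the paper leaves implicit.
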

\begin{proof}
The equivalences in the first claim follow from Eq.~\eqref{e:pSpS0pS1},
whereas the second claim follows from Eq.~\eqref{e:pLvS}.
\end{proof}

In view of the previous lemma, it is interesting to find nonempty posets
$S$ with the property that $\phi^0_S=0$. 
It turns out that a collection of such posets is
given by the posets of type $\wt{A}$, which are defined below.

\begin{defn}
A poset $S$ is \emph{of type $\wt{A}$} if its incidence algebra $KS$ is a path
algebra of an orientation of an Euclidean diagram of type $\wt{A}$.
\end{defn}

Let us make the definition more explicit. After fixing a vertex, any
orientation of a cycle on $n \geq 2$ vertices can be encoded by a sequence
of length $n$ consisting of the symbols $+$ and $-$, 
where each symbol tells us whether the corresponding arrow is
directed clockwise ($+$) or anti-clockwise ($-$).
For an orientation that is not a directed cycle,
the encoding sequence can be described in terms of the
lengths of runs of identical consecutive symbols $n^+_1, \dots, n^+_k$
(for runs of $+$-s) and $n^-_1, \dots, n^-_k$ (for runs of $-$-s), where
$k \geq 1$ and the lengths $n^+_i$, $n^-_j$ are positive and sum to $n$.

A poset $S$ is of type $\wt{A}$ if and only if its Hasse quiver is
an orientation of a cycle and $k \geq 2$.
As the Coxeter polynomials of orientations of Euclidean diagrams are
known~\cite[\S5.1]{Lenzing99}, we see that if $S$ is of type $\wt{A}$ then
$\phi_S(x) = (x^p-1)(x^q-1)$ for $p=\sum n^+_i$, $q=\sum n^-_j$,
so in particular its incidence algebra $KS$ is of cyclotomic type.

\begin{example} \label{ex:A22}
The Hasse quiver of the smallest poset of type $\wt{A}$ is given by
\[
\xymatrix@=1pc{
& {\bullet} \\
{\bullet} \ar[ur] \ar[dr] && {\bullet} \ar[dl] \ar[ul] \\
& {\bullet}
}
\]
which corresponds to the sequence $(+-+-)$ (starting with the leftmost vertex
and going clockwise).
Here $k=2$, $n^+_1=n^+_2=n^-_1=n^-_2=1$ and $p=q=2$, so $\phi_S(x)=(x^2-1)^2$.
One can verify (e.g.\ using Lemma~\ref{l:p0S}) that $\phi^0_S(x)=0$.
\end{example}

The phenomenon observed in the preceding example
occurs for any poset of type $\wt{A}$.

\begin{theorem} \label{t:Atilde}
If $S$ is of type $\wt{A}$ then $\phi^0_S=0$.
\end{theorem}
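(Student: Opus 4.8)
The plan is to reduce, via Lemma~\ref{l:p0S}, the vanishing $\phi^0_S=0$ to the single identity $\phi_{\wh{S}}(x)=(x+1)\phi_S(x)$, and then to establish this identity by a direct computation with the Cartan matrix of $\wh{S}$, exploiting two structural features of a poset $S$ of type $\wt{A}$: its incidence algebra $KS$ is hereditary (the Hasse quiver $Q_S$, being a cycle with at least two sources and sinks, has no two parallel directed paths), and $Q_S$ is a $2$-regular graph.

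First I would write down the Cartan matrix of $\wh{S}$. Since $\wh{1}$ is the unique maximal element, ordering it last gives the block form
\[
C_{\wh{S}} = \begin{pmatrix} C_S & 0 \\ \mathbf 1 & 1 \end{pmatrix},
\]
where $\mathbf 1$ is the all-ones row of length $|S|$. Writing $B = xC_S + C_S^T$ and expanding $\det(xC_{\wh{S}}+C_{\wh{S}}^T)$ along the last row and column yields
\[
\phi_{\wh{S}}(x) = (x+1)\phi_S(x) - x\cdot \mathbf 1\,\operatorname{adj}(B)\,\mathbf 1^T .
\]
Hence everything reduces to showing that the sum of all entries of $\operatorname{adj}(B)$ vanishes, i.e.\ $\mathbf 1\,\operatorname{adj}(B)\,\mathbf 1^T = 0$; by Lemma~\ref{l:p0S} this is exactly $\phi^0_S=0$.

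To prove the vanishing, the key move is to produce a row vector $\mathbf a$ with $\mathbf a B = (x-1)\mathbf 1$ and $\mathbf a\,\mathbf 1^T = 0$: multiplying $\mathbf a B=(x-1)\mathbf 1$ on the right by $\operatorname{adj}(B)$ and using $B\,\operatorname{adj}(B)=\det(B)\,I$ gives $\mathbf 1\,\operatorname{adj}(B)=\tfrac{\det B}{x-1}\,\mathbf a$, whence $\mathbf 1\,\operatorname{adj}(B)\,\mathbf 1^T = \tfrac{\det B}{x-1}\,(\mathbf a\,\mathbf 1^T)=0$. The correct candidate is $\mathbf a = \mathbf 1\,C_S^{-1}$. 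Here I use that $KS$ is hereditary, so $C_S^{-1}=I-N$ with $N$ the adjacency matrix of $Q_S$; consequently the $j$-th entry of $\mathbf 1 C_S^{-1}$ is $1-d^+(j)$ and the $i$-th entry of $C_S^{-1}\mathbf 1^T$ is $1-d^-(i)$. Because $Q_S$ is an orientation of a cycle, $d^+(j)+d^-(j)=2$ at every vertex, so these two vectors are negatives of one another; writing $\mathbf e$ for the resulting ``source minus sink'' indicator vector, one gets $\mathbf 1 C_S^{-1}=-\mathbf e^T$ and $C_S^{-1}\mathbf 1^T=\mathbf e$, hence $C_S\mathbf e=\mathbf 1^T$. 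A short manipulation then gives $\mathbf 1 C_S^{-1}C_S^T = -\mathbf e^T C_S^T = -(C_S\mathbf e)^T = -\mathbf 1$, so that $\mathbf a B = x\mathbf 1 + \mathbf 1 C_S^{-1}C_S^T = (x-1)\mathbf 1$ (equivalently, $\mathbf 1$ is a left eigenvector of the Coxeter transformation $-C_S^{-1}C_S^T$ with eigenvalue $1$). Finally $\mathbf a\,\mathbf 1^T = \sum_i(1-d^-(i)) = |S| - \#\{\text{arrows of }Q_S\} = 0$, since a cycle on $n$ vertices has exactly $n$ arrows (equivalently, the numbers of sources and sinks coincide).

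The main obstacle is the middle step: recognizing that the whole statement collapses to the vanishing of the adjugate sum $\mathbf 1\,\operatorname{adj}(B)\,\mathbf 1^T$, and then spotting the vector $\mathbf a=\mathbf 1 C_S^{-1}$ that witnesses it. Once the target identity is isolated, the $2$-regularity of the cycle (forcing $d^+ + d^-=2$) and the hereditary identity $C_S^{-1}=I-N$ combine to make $\mathbf 1$ an eigenvector of the Coxeter transformation on both sides, after which the computation is immediate. I would also remark that this argument uses only that $Q_S$ is a cycle with $KS$ hereditary, so in particular it does not require the explicit formula $\phi_S(x)=(x^p-1)(x^q-1)$.
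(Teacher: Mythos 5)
Your proof is correct, but it takes a genuinely different route from the paper's. The paper proceeds by reduction: it first shows that BGP reflections at sinks or sources of valency $2$ preserve the refined Coxeter polynomials, uses this to normalize the orientation to $k=2$ with $n^+_2=n^-_2=1$, then shortens the two remaining arms via Lemma~\ref{l:p0XvS}\eqref{it:p0XvX}, and finally checks the base cases $\wt{A}_{2,2}$, $\wt{A}_{2,3}$, $\wt{A}_{3,3}$ by hand. You instead give a single closed computation: writing $B=xC_S+C_S^T$, the block form of $C_{\wh{S}}$ together with Lemma~\ref{l:p0S} reduces the theorem to the vanishing of $\mathbf{1}\operatorname{adj}(B)\mathbf{1}^T$ (your expansion in fact shows $\phi^0_S=\mathbf{1}\operatorname{adj}(B)\mathbf{1}^T$ for \emph{any} poset, an identity worth recording in its own right), and you verify this by exhibiting the vector $\mathbf{a}=\mathbf{1}C_S^{-1}$ satisfying $\mathbf{a}B=(x-1)\mathbf{1}$ and $\mathbf{a}\mathbf{1}^T=0$. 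Both ingredients are exactly where the hypothesis of type $\wt{A}$ enters: the condition $k\geq 2$ rules out parallel paths, so $KS$ is hereditary and $C_S^{-1}=I-N^T$ with $N$ the adjacency matrix of $Q_S$; and $2$-regularity of the cycle forces $\mathbf{1}C_S^{-1}=-\bigl(C_S^{-1}\mathbf{1}^T\bigr)^T$, i.e.\ $\mathbf{1}$ is a left fixed vector of the Coxeter transformation $-C_S^{-1}C_S^T$ --- this is precisely the null root of affine type $\wt{A}$ --- while equality of the numbers of sources and sinks gives $\mathbf{a}\mathbf{1}^T=0$. (There is a harmless transpose ambiguity in your statement of which of $d^+$, $d^-$ labels which vector, and a sign slip in the final sum, but since both the row sums and the column sums of $C_S^{-1}$ add up to $n-\#\{\text{arrows}\}=0$, nothing is affected.) As for what each approach buys: yours is uniform, avoids induction and case-checking, and isolates the conceptual reason for the vanishing (the affine null root fixed by the Coxeter transformation); the paper's route is longer for this theorem but establishes along the way the invariance of refined Coxeter polynomials under valency-$2$ BGP reflections, a tool of independent interest that fits the paper's insertion calculus.
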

\begin{proof}[Idea of proof]
We first show that for a poset whose incidence algebra equals the path algebra
of its Hasse quiver, the BGP reflection~\cite{BGP73}
at a vertex (sink or source)
\emph{whose valency equals $2$}
preserves the refined Coxeter polynomials.
By performing an appropriate sequence of such reflections, one is able
to reduce to the case $k=2$ and $n^+_2=n^-_2=1$.

Then we use Lemma~\ref{l:p0XvS}\eqref{it:p0XvX} below
to shorten the lengths $n^+_1$ and $n^-_1$ one has to consider,
so there remains only a few initial cases to check, namely
those with $(n_1^+, n_1^-)$ being equal to one of the pairs
$(1,1), (1,2)$ or $(2,2)$.
\end{proof}

The next lemma shows how to construct new posets with $\phi^0=0$ from older ones.

\begin{lemma} \label{l:p0XvS}
Let $X$ be a poset and let $v \in X$.
\begin{enumerate}
\renewcommand{\theenumi}{\alph{enumi}}
\item
If $\phi^0_{X \setminus \{v\}}=0$ then $\phi^0_{X \xleftarrow{v} S} = 0$
for any poset $S$ satisfying $\phi^0_S=0$.

\item \label{it:p0XvX}
If $\phi^0_{X \setminus \{v\}}=0$ and $\phi^0_X=0$ then
$\phi^0_{X \xleftarrow{v} S} = 0$ for any poset $S$.
\end{enumerate}
\end{lemma}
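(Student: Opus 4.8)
The plan is to deduce both parts directly from the product formula for the refined polynomial $\phi^0$ of an insertion, namely equation~\eqref{e:p0XvS} of Proposition~\ref{p:p01XvS}, which states
\[
\phi^0_{X \xleftarrow{v} S}(x) = \phi^0_X(x) \cdot \phi^0_S(x) + \phi^0_{X \setminus \{v\}}(x) \cdot \phi^1_S(x) .
\]
This expresses $\phi^0_{X \xleftarrow{v} S}$ as a sum of two products: the first pairs $\phi^0_X$ with $\phi^0_S$, and the second pairs $\phi^0_{X \setminus \{v\}}$ with $\phi^1_S$. In each part of the lemma the hypotheses are arranged precisely so that both products vanish, and the whole argument reduces to checking which factor annihilates which summand.

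First I would treat part~(a). Here the hypothesis $\phi^0_{X \setminus \{v\}} = 0$ forces the second summand of~\eqref{e:p0XvS} to vanish, while the hypothesis $\phi^0_S = 0$ forces the first summand to vanish; hence the right-hand side is identically zero and $\phi^0_{X \xleftarrow{v} S} = 0$, as claimed.

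For part~(b) the same formula applies, but now the two vanishing conditions are attached to the poset $X$ rather than to $S$: the hypothesis $\phi^0_X = 0$ kills the first summand and $\phi^0_{X \setminus \{v\}} = 0$ kills the second. Since neither condition involves $S$, the conclusion $\phi^0_{X \xleftarrow{v} S} = 0$ holds for an arbitrary poset $S$, which is exactly the stronger statement in~(b).

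There is no genuine obstacle once Proposition~\ref{p:p01XvS} is in hand; the lemma is pure bookkeeping of the two-term formula. The only point deserving a moment of care is the asymmetry between the two parts: in~(a) one of the annihilating factors, namely $\phi^0_S$, comes from the inserted poset, whereas in~(b) both annihilating factors come from $X$ and its deletion, which is precisely why~(b) is able to dispense with any hypothesis on $S$.
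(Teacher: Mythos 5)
Your proposal is correct and follows exactly the paper's own argument, which proves both parts by invoking Eq.~\eqref{e:p0XvS} of Proposition~\ref{p:p01XvS} and observing which hypothesis annihilates which of the two summands. You have merely spelled out the bookkeeping that the paper leaves implicit.
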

\begin{proof}
Use Eq.~\eqref{e:p0XvS}.
\end{proof}

These constructions motivate the following definition.

\begin{defn}
Define recursively a class $\cC$ of posets as follows:
\begin{itemize}
\item
The empty poset belongs to $\cC$;

\item
If $X$ is a poset with an element $v \in X$ such that
$X \setminus \{v\}$ belongs to $\cC$, then $X \xleftarrow{v} S$ belongs to $\cC$
for any poset $S$ of type $\wt{A}$.
\end{itemize}
\end{defn}

\begin{rem}
We list a few properties of the class $\cC$.
\begin{itemize}
\item
Any poset of type $\wt{A}$ belongs to $\cC$.

\item
If $X$ and $Y$ are in $\cC$, then $\widetilde{X} \xleftarrow{v} Y$ is also
in $\cC$ for any poset $\widetilde{X}$ and element $v \in \widetilde{X}$
such that $X = \widetilde{X} \setminus \{v\}$.

\item
In particular, $\cC$ is closed under taking ordinal sums
(take $\widetilde{X} = \wh{X}$ and $v=\wh{1}$).
\end{itemize}
\end{rem}

The class $\cC$ provides us with many algebras of cyclotomic type, as
stated in the next corollary.

\begin{cor} \label{cor:cyclo}
Let $S$ be a poset in the class $\cC$. Then:
\begin{enumerate}
\renewcommand{\theenumi}{\alph{enumi}}
\item
$\phi^0_S=0$ and the incidence algebra $KS$ is of cyclotomic type.

\item
If $\gL$ is a triangular algebra and $v$ is a vertex in its quiver,
then $\phi_{\gL \xleftarrow{v} S} = \phi_{\gL^-} \cdot \phi_S$.
In particular, if $\gL^-$ is of cyclotomic type, then so is
$\gL \xleftarrow{v} S$.
\end{enumerate}
\end{cor}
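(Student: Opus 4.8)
The plan is to prove part~(a) by induction on the recursive construction of the class $\cC$, carrying both assertions (that $\phi^0_S = 0$ and that $KS$ is of cyclotomic type) simultaneously through the induction, and then to deduce part~(b) as an immediate application of Lemma~\ref{l:p0S}.

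For the base case $S = \varnothing$, the first row of Table~\ref{tab:refCox} gives $\phi^0_\varnothing = 0$, while $\phi_\varnothing = 1$ is an empty product of cyclotomic polynomials; both assertions therefore hold. For the inductive step, a nonempty poset $S \in \cC$ has the form $S = X \xleftarrow{v} T$ where $X \setminus \{v\} \in \cC$ and $T$ is of type $\wt{A}$. The inductive hypothesis supplies $\phi^0_{X \setminus \{v\}} = 0$, and Theorem~\ref{t:Atilde} gives $\phi^0_T = 0$; hence Lemma~\ref{l:p0XvS}(a) yields $\phi^0_{X \xleftarrow{v} T} = 0$, proving the first assertion for $S$. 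For cyclotomicity I would apply Lemma~\ref{l:p0S}(b) to the incidence algebra $\gL = KX$; together with the identifications $KX \xleftarrow{v} T \simeq K(X \xleftarrow{v} T)$ and $(KX)^- = K(X \setminus \{v\})$ from Lemma~\ref{l:KXvS}, this gives the multiplicative formula $\phi_S = \phi_{X \setminus \{v\}} \cdot \phi_T$. The factor $\phi_{X \setminus \{v\}}$ is a product of cyclotomic polynomials by the inductive hypothesis, while $\phi_T(x) = (x^p-1)(x^q-1)$ is manifestly such a product (since $x^n - 1 = \prod_{d \mid n} \Phi_d(x)$); hence so is $\phi_S$.

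Part~(b) then follows with no further induction: since $\phi^0_S = 0$ for $S \in \cC$ by part~(a), Lemma~\ref{l:p0S}(b) applies verbatim to give $\phi_{\gL \xleftarrow{v} S} = \phi_{\gL^-} \cdot \phi_S$ for any triangular algebra $\gL$ and any vertex $v$ in its quiver. If in addition $\gL^-$ is of cyclotomic type, then both $\phi_{\gL^-}$ and $\phi_S$ are products of cyclotomic polynomials, whence so is their product $\phi_{\gL \xleftarrow{v} S}$.

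I do not anticipate a genuine obstacle: all the substantive content already resides in Theorem~\ref{t:Atilde}, whose proof has been sketched, and in Lemma~\ref{l:p0S}. The only point demanding care is to keep both inductive assertions bundled together, since establishing cyclotomicity of $\phi_S$ through the factorization $\phi_S = \phi_{X \setminus \{v\}} \cdot \phi_T$ requires the inductive hypothesis on $\phi_{X \setminus \{v\}}$, and this factorization itself rests on $\phi^0_T = 0$, which is precisely where the type-$\wt{A}$ hypothesis enters.
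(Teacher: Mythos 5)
Your proposal is correct and takes essentially the same route as the paper: the paper's own proof merely cites Theorem~\ref{t:Atilde}, Lemma~\ref{l:p0XvS} and Lemma~\ref{l:p0S} for part~(a), and part~(a) together with Lemma~\ref{l:p0S} for part~(b), and your structural induction over the recursive definition of $\cC$ (using Lemma~\ref{l:KXvS} to pass between posets and their incidence algebras, and the factorization $\phi_T(x)=(x^p-1)(x^q-1)$ for type $\wt{A}$) is exactly the argument those citations compress. The only difference is that you make explicit the bundled induction carrying both assertions simultaneously, which the paper leaves implicit.
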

\begin{proof}
The first claim is a consequence of Theorem~\ref{t:Atilde}, Lemma~\ref{l:p0XvS}
and Lemma~\ref{l:p0S}. The second claim is a consequence of the first one
together with Lemma~\ref{l:p0S}.
\end{proof}

\begin{example}
Figure~\ref{fig:posets8} shows 
three posets with 8 elements belonging to the class $\cC$. If $S$ is any
of these posets, then
$\phi^0_S(x)=0$ and $\phi_S(x)=\phi^1_S(x)=(x-1)^4 (x+1)^4$.
We note that the incidence algebras are not derived equivalent,
as their Coxeter transformations are not similar over $\bQ$.
Namely, if $J_n(\lambda)$ denotes a Jordan block of size $n$ with
eigenvalue $\lambda$, then the corresponding Jordan normal forms
are $J_4(1) \oplus J_1(-1)^4$, $J_2(1)^2 \oplus J_1(-1)^4$ and
$J_2(1) \oplus J_1(1)^2 \oplus J_1(-1)^4$.
\end{example}

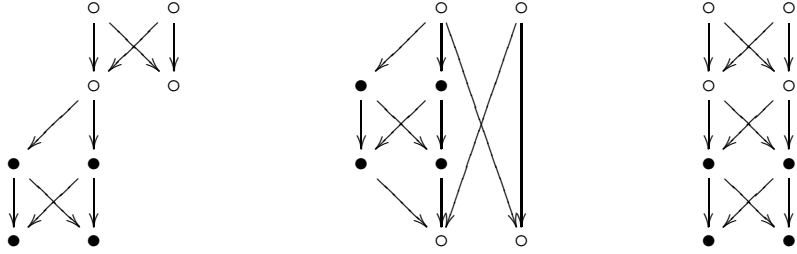
\begin{figure}
\begin{align*}
\xymatrix@=1.5pc{
& {\circ} \ar[d] \ar[dr] & {\circ} \ar[d] \ar[dl] \\
& {\circ} \ar[dl] \ar[d] & {\circ} \\
{\bullet} \ar[d] \ar[dr] & {\bullet} \ar[d] \ar[dl] \\
{\bullet} & {\bullet}
}
&&
\xymatrix@=1.5pc{
& {\circ} \ar[dl] \ar[d] \ar[dddr] & {\circ} \ar[dddl] \ar[ddd] \\
{\bullet} \ar[d] \ar[dr] & {\bullet} \ar[d] \ar[dl] \\
{\bullet} \ar[dr] & {\bullet} \ar[d] \\
& {\circ} & {\circ}
}
&&
\xymatrix@=1.5pc{
{\circ} \ar[d] \ar[dr] & {\circ} \ar[d] \ar[dl] \\ 
{\circ} \ar[d] \ar[dr] & {\circ} \ar[d] \ar[dl] \\ 
{\bullet} \ar[d] \ar[dr] & {\bullet} \ar[d] \ar[dl] \\ 
{\bullet} & {\bullet}
}
\end{align*}
\caption{Hasse diagrams of three posets belonging to the
class $\cC$. All posets are of the form $X \xleftarrow{v} S$
with $S$ and $X \setminus \{v\}$ of type $\wt{A}$.
Vertices of $S$ are colored black and those of $X \setminus \{v\}$
are white.}
\label{fig:posets8}
\end{figure}

\subsection{Interlaced towers of algebras}

The last application we present concerns interlaced towers of algebras
introduced by de la Pe\~{n}a in the paper~\cite{delaPena15}. We first
recall the notion of representable polynomials, introduced
by Lenzing and de la Pe\~{n}a in~\cite{LenzingdelaPena09}.

\begin{defn}[\protect{\cite{LenzingdelaPena09}}]
A polynomial $\phi \in \bZ[x]$ is \emph{represented by} a polynomial
$q \in \bZ[y]$ if
\begin{equation} \label{e:represent}
\phi(x^2) = x^{\deg q} \cdot q(x+x^{-1}).
\end{equation}

A polynomial is \emph{representable} if it is represented by some polynomial.
\end{defn}

\begin{example}
The polynomial $x+1$ is represented by $y$, since $x^2+1 = x^1 (x+x^{-1})$.
\end{example}

Eq.~\eqref{e:represent} implies a strong relation between the roots of
the polynomials $\phi$ and $q$, as demonstrated by the next lemma.

\begin{lemma}[\protect{\cite{LenzingdelaPena09}}]
Assume that $\phi$ is represented by $q$. Then:
\begin{enumerate}
\renewcommand{\theenumi}{\alph{enumi}}
\item
$\Roots(\phi) \subset \bS^1$ if and only if $\Roots(q) \subset [-2,2]$.

\item
$\Roots(\phi) \subset \bS^1 \cup \bR_{>0}$ 
if and only if $\Roots(q) \subset \bR$.
\end{enumerate}
\end{lemma}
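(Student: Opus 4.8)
The plan is to exploit the substitution underlying the definition of representability, namely the Joukowski map $\zeta \mapsto \zeta + \zeta^{-1}$, which converts statements about $\Roots(\phi)$ on circles and lines into statements about $\Roots(q)$ on intervals and the real line.

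First I would establish the basic correspondence between the two root sets. Equation~\eqref{e:represent} is an identity of genuine polynomials in $x$, hence may be evaluated at any complex number. Fix $\mu \in \mathbb{C} \setminus \{0\}$, choose a square root $\zeta$ with $\zeta^2 = \mu$, and write $w = \zeta + \zeta^{-1}$ and $m = \deg q$; then \eqref{e:represent} gives $\phi(\mu) = \zeta^{m} q(w)$, and since $\zeta \neq 0$ this yields
\[
\phi(\mu) = 0 \iff q(w) = 0 .
\]
I would also note that $0 \notin \Roots(\phi)$: the constant term of the right-hand side of~\eqref{e:represent} is the leading coefficient of $q$, so $\phi(0) \neq 0$ and every root of $\phi$ has a square root. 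Conversely every root $y$ of $q$ is realised as such a $w$ by solving $\zeta^2 - y\zeta + 1 = 0$ (whose roots are nonzero), producing a root $\mu = \zeta^2$ of $\phi$ with $w = y$. Thus the assignment $\mu \mapsto w$ matches up the two root sets in both directions.

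Next I would record the two elementary facts about $\zeta \mapsto w = \zeta + \zeta^{-1}$ that translate the geometric conditions; using $\mu = \zeta^2$ they read
\begin{align*}
\mu \in \bS^1 &\iff \zeta \in \bS^1 \iff w \in [-2,2], \\
\mu \in \bS^1 \cup \bR_{>0} &\iff w \in \bR .
\end{align*}
The first holds because $|\zeta^2|=1 \iff |\zeta|=1$, and for $\zeta \in \bS^1$ one has $w = \zeta + \bar\zeta = 2\,\mathrm{Re}\,\zeta \in [-2,2]$, while $w \in [-2,2]$ forces the two (reciprocal) roots of $\zeta^2 - w\zeta + 1$ to be a conjugate pair of modulus one. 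For the second, if $w \in \bR$ the quadratic $\zeta^2 - w\zeta + 1$ has either a conjugate pair on $\bS^1$ (so $\mu \in \bS^1$) or two real reciprocal roots (so $\mu = \zeta^2 \in \bR_{>0}$), and conversely $\mu \in \bS^1$ gives $w \in [-2,2] \subset \bR$ while $\mu \in \bR_{>0}$ forces $\zeta \in \bR \setminus \{0\}$, whence $w \in \bR$.

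Finally I would combine these. For part~(a), if $\Roots(q) \subset [-2,2]$ then any $\mu \in \Roots(\phi)$ yields $w \in \Roots(q) \subset [-2,2]$ and hence $\mu \in \bS^1$; the converse passes from each root of $q$ to the associated root of $\phi$ in the same way. Part~(b) is identical using the second equivalence. I do not expect a genuine obstacle: the whole content lies in setting up the correspondence cleanly and in the elementary analysis of $\zeta^2 - w\zeta + 1$. The only point demanding a little care is part~(b), where, passing from $\mu$ to $w$, one uses that $\mu \in \bR_{>0}$ excludes $\zeta$ purely imaginary (which would give $\mu \in \bR_{<0}$), and, passing from $w \in \bR$ to $\mu$, one must split into the conjugate-pair versus real-pair cases of the quadratic.
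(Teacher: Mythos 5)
Your proposal is correct, and there is nothing in the paper to compare it against in detail: the paper states this lemma as a citation from Lenzing--de la Pe\~{n}a \cite{LenzingdelaPena09} and gives no proof at all. Your argument is the standard (and essentially the cited) one: the substitution $\mu = \zeta^2$, $w = \zeta + \zeta^{-1}$ sets up a two-way correspondence between $\Roots(\phi)$ and $\Roots(q)$, after which both parts reduce to elementary facts about the quadratic $\zeta^2 - w\zeta + 1$. All the delicate points are handled properly: you check $\phi(0) \neq 0$ (indeed $\phi(0)$ equals the leading coefficient of $q$, as one sees by evaluating the polynomial identity at $x=0$), so every root of $\phi$ admits the substitution; you go from roots of $q$ back to roots of $\phi$ via the nonzero roots of $\zeta^2 - y\zeta + 1$; and in part (b) you correctly split the real quadratic into the conjugate-pair case (giving $\mu \in \bS^1$, since the product of the conjugate roots is $1$) and the real-root case (giving $\mu = \zeta^2 > 0$), and in the reverse direction note that $\mu \in \bR_{>0}$ forces $\zeta$ real. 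One cosmetic remark: the correspondence $\mu \mapsto w$ depends on the choice of square root $\zeta$ (the other choice gives $-w$), but since your argument only ever needs ``for each root on one side there is a root on the other side lying in the required region,'' and both regions $[-2,2]$ and $\bR$ are symmetric under $w \mapsto -w$ anyway, this causes no gap.
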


The motivation for considering representable polynomials originates
in the following result of A'Campo. Recall that a quiver is
\emph{bipartite} if every vertex is a source or a sink.

\begin{prop}[\cite{ACampo76}] \label{p:bipartite}
If $Q$ is a bipartite quiver, then $\phi_Q$ is representable by the
characteristic polynomial of the adjacency matrix $A_{\overline{Q}}$
of the underlying graph of $Q$.
\end{prop}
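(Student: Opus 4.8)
The plan is to exploit the bipartite structure to put the Cartan matrix into block form, reduce both $\phi_Q$ and the characteristic polynomial of $A_{\overline{Q}}$ to determinants built from a single rectangular matrix, and then verify the representability identity as a bookkeeping of exponents. Since $Q$ is bipartite, its vertices split into a set of $m$ sources and a set of $n$ sinks, every arrow runs from a source to a sink, and there are no paths of length $\geq 2$. Ordering the vertices with the sources first, the only contributions to the Cartan matrix come from trivial paths (on the diagonal) and arrows, so
\[
C = \begin{pmatrix} I_m & B \\ 0 & I_n \end{pmatrix},
\qquad
A_{\overline{Q}} = \begin{pmatrix} 0 & B \\ B^T & 0 \end{pmatrix},
\]
where $B$ is the $m \times n$ biadjacency matrix counting arrows between each source and each sink. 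Note that $KQ$ is triangular, so $\phi_Q(x) = \det(xC + C^T)$.

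Next I would compute both polynomials by a Schur complement expansion, working over the field $K(x)$ (equivalently, on a Zariski-dense open set) so that the bottom-right blocks are invertible; since both sides are genuine polynomials, an identity valid generically holds identically. From the block form of $C$ one gets
\[
xC + C^T = \begin{pmatrix} (x+1)I_m & xB \\ B^T & (x+1)I_n \end{pmatrix},
\]
and expanding along the invertible block $(x+1)I_n$ yields
\[
\phi_Q(x) = (x+1)^{\,n-m}\det\bigl((x+1)^2 I_m - x\,BB^T\bigr).
\]
Writing $q(y) = \det(yI - A_{\overline{Q}})$ for the characteristic polynomial and performing the analogous expansion gives $q(y) = y^{\,n-m}\det(y^2 I_m - BB^T)$, a genuine polynomial of degree $m+n$.

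It then remains to check the identity $\phi_Q(x^2) = x^{\deg q}\,q(x+x^{-1})$ with $\deg q = m+n$. Substituting $x \mapsto x^2$ into the formula for $\phi_Q$ gives $(x^2+1)^{n-m}\det\bigl((x^2+1)^2 I_m - x^2 BB^T\bigr)$. On the other side, the scalar prefactor collapses via $x^{m+n}(x+x^{-1})^{n-m} = x^{2m}(x^2+1)^{n-m}$, and absorbing the remaining $x^{2m}$ into the $m \times m$ determinant turns $(x+x^{-1})^2 = x^{-2}(x^2+1)^2$ into $(x^2+1)^2$ and $BB^T$ into $x^2 BB^T$; the two expressions coincide.

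The computation is essentially routine once both polynomials are reduced to the common form in $BB^T$, so the genuine content lies in the two Schur-complement reductions. The point requiring care is the invertibility hypothesis underlying those reductions (the factored forms with $(x+1)^{n-m}$ and $y^{n-m}$ are valid only as rational functions, i.e.\ for $x \neq -1$ and $y \neq 0$); I would dispose of this by observing that $\phi_Q$ and $q$ are honest polynomials of degree $m+n$, so the identity, once established generically, extends to all values. A secondary point is confirming the exact placement of $B$ versus $B^T$ in $C$: under the opposite vertex ordering one obtains $B^TB$ in place of $BB^T$, but these matrices share the same nonzero spectrum and the reduction is symmetric in $m$ and $n$, so the final identity is unaffected.
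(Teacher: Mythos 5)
Your proposal is correct. Note, however, that the paper itself offers no proof of this proposition: it is quoted verbatim from A'Campo's work (the citation \cite{ACampo76}), so there is no internal argument to compare yours against. What you have written is essentially the classical proof. Your block decomposition is right: in a bipartite quiver every path has length at most one, so the Cartan matrix is unipotent upper triangular with the arrow-count matrix $B$ as its only off-diagonal block, and the two Schur-complement reductions
\[
\phi_Q(x) = (x+1)^{\,n-m}\det\bigl((x+1)^2 I_m - x\,BB^T\bigr),
\qquad
q(y) = y^{\,n-m}\det\bigl(y^2 I_m - BB^T\bigr),
\]
together with the exponent bookkeeping $x^{m+n}(x+x^{-1})^{n-m} = x^{2m}(x^2+1)^{n-m}$ and $x^2(x+x^{-1})^2=(x^2+1)^2$, do verify the representability identity $\phi_Q(x^2)=x^{\deg q}\,q(x+x^{-1})$ with $\deg q = m+n$ exactly. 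Your two points of care are also handled correctly: working over the rational function field makes $(x+1)I_n$ and $yI_n$ genuinely invertible, so the factored forms hold as identities of rational functions, which suffices since the final identity relates honest polynomials; and the $B$ versus $B^T$ ambiguity in the Cartan matrix only swaps $BB^T$ for $B^TB$, which changes nothing since the argument is symmetric in $m$ and $n$.
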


This allows the use of methods from spectral graph theory~\cite{CvRoSi10},
and in particular the interlacing properties of eigenvalues of symmetric
matrices, for assessing spectral properties of Coxeter polynomials, see for
instance~\cite[Proposition~2.6]{delaPena94}.
Since the matrix $A_{\overline{Q}}$ is symmetric, its eigenvalues
are real and hence the roots of $\phi_Q$ lie in $\bS^1 \cup \bR_{>0}$.

\medskip

We first make the observation that any self-reciprocal polynomial is
representable.
\begin{prop}
Assume that $p \in \bZ[x]$ satisfies $\deg p \leq n$ and $p(x)=x^n p(x^{-1})$.
Then there exists $q \in \bZ[y]$ with $\deg q \leq n$ such that
$p(x^2) = x^n q(x+x^{-1})$.
\end{prop}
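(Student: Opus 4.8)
The plan is to make everything explicit and reduce the claim to a classical fact about symmetric Laurent polynomials. Write $p(x) = \sum_{i=0}^{n} a_i x^i$ with $a_i \in \bZ$; the hypothesis $p(x) = x^n p(x^{-1})$ is equivalent to the coefficient symmetry $a_i = a_{n-i}$ for all $i$. Multiplying the desired identity through by $x^{-n}$, the claim becomes that the Laurent polynomial
\[
x^{-n} p(x^2) = \sum_{i=0}^{n} a_i\, x^{2i-n}
\]
lies in $\bZ[x+x^{-1}]$. First I would observe that its exponents $2i-n$ range symmetrically over $-n, -n+2, \dots, n-2, n$, and that the coefficient symmetry says exactly that the coefficient of $x^{d}$ equals the coefficient of $x^{-d}$; hence this Laurent polynomial is invariant under $x \mapsto x^{-1}$.

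The engine of the proof is the classical fact that every integer Laurent polynomial invariant under $x \mapsto x^{-1}$ is a polynomial in $x + x^{-1}$ with integer coefficients. I would make this effective by defining $u_k \in \bZ[y]$ through $u_0 = 2$, $u_1 = y$ and $u_{k+1} = y\,u_k - u_{k-1}$; an easy induction shows $x^k + x^{-k} = u_k(x+x^{-1})$ and that $u_k$ is monic of degree $k$. Grouping the terms of the displayed sum into conjugate pairs $x^d + x^{-d}$ for $d > 0$ (together with the lone constant term $a_{n/2}$, which occurs only when $n$ is even and needs no conversion) and substituting $x^d + x^{-d} = u_d(x+x^{-1})$ exhibits $x^{-n}p(x^2)$ as $q(x+x^{-1})$ for the explicit polynomial $q = \sum_{d>0} a_{(n+d)/2}\,u_d + (\text{constant})$, so that $q \in \bZ[y]$ and $p(x^2) = x^n q(x+x^{-1})$.

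For the degree bound, the largest exponent occurring is $d = n$, arising from $i = 0$ and $i = n$, which contributes $a_n\,u_n$ with $\deg u_n = n$; every other summand has strictly smaller degree, so $\deg q \le n$. I do not anticipate a genuine obstacle here: the two points to get right are both settled cleanly by the recursion for $u_k$. Integrality of $q$ holds because each $u_k$ has integer coefficients, and in particular no spurious factor of $\tfrac12$ enters through the constant term, since that term is simply the constant $a_{n/2}$ rather than a multiple of $u_0 = 2$. The sharp bound $\deg q \le n$ holds because $u_k$ is monic of degree exactly $k \le n$.
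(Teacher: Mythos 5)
Your proof is correct, but it takes a different route from the paper. The paper argues by induction on $n$: it subtracts $a_n x^n (x+x^{-1})^n$ from $p(x^2)$, observes that the difference is $x^2 p'(x^2)$ for a self-reciprocal $p'$ of degree at most $n-2$, and sets $q(y) = a_n y^n + q'(y)$; the auxiliary polynomials used are thus the plain powers $y^k$, and the induction peels off one leading coefficient at a time. You instead pass to the Laurent polynomial $x^{-n}p(x^2)$, note its invariance under $x \mapsto x^{-1}$, and expand it explicitly in the basis $u_d(y)$ defined by $u_0=2$, $u_1=y$, $u_{d+1}=y\,u_d-u_{d-1}$ (so that $x^d+x^{-d}=u_d(x+x^{-1})$), yielding the closed formula $q = \sum_{d>0} a_{(n+d)/2}\,u_d + (\text{constant term})$. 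Both arguments are sound and of comparable length; what each buys is slightly different. The paper's induction delivers with no extra work the fact it needs afterwards, namely that the coefficient of $y^n$ in $q$ equals that of $x^n$ in $p$ (so Coxeter polynomials are represented by \emph{monic} polynomials) --- though your approach yields this too, since $u_n$ is monic and all other summands have lower degree, as you note. Your approach gives in addition an explicit expression for \emph{every} coefficient of $q$ in the Chebyshev-type basis $\{u_d\}$, and your parity bookkeeping (the lone constant $a_{n/2}$ when $n$ is even, avoiding any factor $\tfrac12$ from $u_0=2$) correctly handles the one place where integrality could have slipped.
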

\begin{proof}[Sketch of proof]
We proceed by induction on $n$, the cases $n=0,1$ being clear.

Let $a_n$ be the coefficient of $x^n$ in $p(x)$. Then
$p(x^2) - a_n x^n (x + x^{-1})^n$ equals $x^2 p'(x^2)$ for a polynomial $p'$
with $\deg p' \leq n-2$ and $p'(x) = x^{n-2} p'(x^{-1})$. 
If $q'$ is the polynomial in $\bZ[y]$ corresponding to $p'$ under the
induction hypothesis, then $q(y) = a_n y^n + q'(y)$ corresponds to $p$.
\end{proof}

One can write an explicit formula relating the coefficients of $p$ and $q$,
but it will not be needed in what follows.
It suffices to note that from the above proof we see that the
coefficient of $y^n$ in $q(y)$ equals that of $x^n$ in $p(x)$.
In particular, if $\deg p = n$ then also $\deg q=n$.

\begin{cor}
If $p(x) = x^{\deg p} \cdot p(x^{-1})$ then $p$ is representable by a
polynomial $q$ with $\deg q = \deg p$. If $p$ is monic, then so is $q$.
\end{cor}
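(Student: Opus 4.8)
The plan is to deduce the corollary directly from the preceding proposition. The proposition already provides, for any $p \in \bZ[x]$ with $\deg p \leq n$ and $p(x) = x^n p(x^{-1})$, a polynomial $q \in \bZ[y]$ with $\deg q \leq n$ satisfying the representation identity $p(x^2) = x^n q(x+x^{-1})$. The corollary differs in that it starts from the \emph{exact} degree statement $p(x) = x^{\deg p} p(x^{-1})$ and asks for $\deg q = \deg p$ (plus preservation of monicity). So the essential task is to upgrade the inequalities $\deg q \leq n$ to equalities and track the leading coefficient.

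First I would set $n = \deg p$, so that the hypothesis $p(x) = x^{\deg p} p(x^{-1})$ is precisely the self-reciprocity hypothesis of the proposition with this choice of $n$. Applying the proposition yields $q \in \bZ[y]$ with $\deg q \leq n$ and $p(x^2) = x^n q(x+x^{-1})$. To pin down $\deg q = n$, I would invoke the remark made immediately after the proposition's proof, namely that the coefficient of $y^n$ in $q$ equals the coefficient of $x^n$ in $p$. Since $n = \deg p$, that coefficient of $x^n$ in $p$ is nonzero (it is the leading coefficient), so the coefficient of $y^n$ in $q$ is nonzero, forcing $\deg q = n = \deg p$. For the monic claim, if $p$ is monic then its leading coefficient is $1$, and by the same correspondence the coefficient of $y^n$ in $q$ is also $1$, so $q$ is monic as well.

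The argument is essentially bookkeeping: the real content lives in the proposition, and the corollary only needs the sharp leading-coefficient information that the proposition's inductive construction already supplies (via $q(y) = a_n y^n + q'(y)$, where $a_n$ is the top coefficient of $p$ and $q'$ has strictly smaller degree). I do not anticipate a genuine obstacle here; the only point requiring care is to confirm that the leading-coefficient correspondence quoted in the post-proof remark is exactly what is needed, which it is, since $\deg p = n$ makes the top coefficient of $p$ nonzero and equal to the top coefficient of $q$. Thus the proof reduces to citing the proposition, reading off the leading coefficients, and concluding $\deg q = \deg p$ with monicity preserved.
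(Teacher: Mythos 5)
Your proof is correct and follows exactly the paper's intended argument: the paper gives no separate proof of this corollary, relying precisely on the remark that the coefficient of $y^n$ in $q$ equals that of $x^n$ in $p$, which with $n = \deg p$ immediately gives $\deg q = \deg p$ and preservation of monicity.
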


\begin{cor} \label{cor:represent}
Let $\gL$ be a finite-dimensional algebra.
\begin{enumerate}
\renewcommand{\theenumi}{\alph{enumi}}
\item
If $\det C_{\gL} \neq 0$ and the matrix $C_{\gL}^{-1} C_{\gL}^T$ has
integer entries, then the Coxeter polynomial $\phi_{\gL}$ is representable
by a monic polynomial.

\item
If $\gL$ is Iwanaga-Gorenstein with $\det C_{\gL} \neq 0$, then
$\phi_{\gL}$ is representable by a monic polynomial.

\item
In particular, Coxeter polynomials of triangular algebras or more
generally algebras of finite global dimension are 
representable by monic polynomials.
\end{enumerate}
\end{cor}

In view of the above corollary, we can reformulate the
definition in~\cite{delaPena15} as follows.

\begin{defn}[\protect{\cite[\S5.2]{delaPena15}}] \label{def:tower}
A sequence $\gL_0, \gL_1, \dots, \gL_n$ of triangular algebras
is an \emph{interlaced tower of algebras} if 
$n \geq 2$ and the following conditions hold:
\begin{enumerate}
\renewcommand{\theenumi}{\roman{enumi}}
\item \label{it:degree}
$\deg \phi_{\gL_{i+1}} = \deg \phi_{\gL_i} + 1$
for all $0 \leq i < n$;

\item \label{it:interlace}
$\phi_{\gL_{i+1}}(x) =
(x+1) \cdot \phi_{\gL_i}(x) - x \cdot \phi_{\gL_{i-1}}(x)$
for all $1 \leq i < n$;

\item \label{it:simple}
The roots of the polynomial representing $\phi_{\gL_0}$ are
real and simple.
\end{enumerate}
\end{defn}

If $q_i$ is the polynomial representing $\phi_{\gL_i}$, then the linear
recurrence~\eqref{it:interlace} implies that
$q_{i+1}(y) = y q_i(y) - q_{i-1}(y)$, hence
a version of Sturm theorem as cited in~\cite{delaPena15,LenzingdelaPena09}
seems to imply an interlacing property for the real roots of
$q_i(y)$ and $q_{i+1}(y)$.

Now observe that the coefficients occurring in condition~\eqref{it:interlace}
are the refined Coxeter polynomials of a poset, namely
$\phi^0_{\bullet \to \bullet}(x) = x+1$
and $\phi^1_{\bullet \to \bullet}(x) = -x$, see Table~\ref{tab:refCox}.
This is the key point behind our next observation.
Note that a description of the quiver with relations of the algebra
$\gL \xleftarrow{v} \overrightarrow{A_i}$ in terms of those of $\gL$
is given in Example~\ref{ex:insertAn}.

\begin{prop} \label{p:tower}
Let $\gL$ be a triangular algebra and $v$ be any vertex in its quiver.
Then the sequence of triangular algebras defined by
$\gL_0 = \gL^-$, $\gL_1 = \gL$ and
$\gL_i = \gL \xleftarrow{v} \overrightarrow{A_i}$
for $i \geq 1$ satisfies conditions~\eqref{it:degree} and~\eqref{it:interlace}
in the above definition.
\end{prop}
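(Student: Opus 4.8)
The plan is to reduce both conditions to a three-term recurrence living entirely on the inserted chain, exploiting the observation---already made before the statement---that the coefficients $x+1$ and $-x$ in condition~\eqref{it:interlace} are precisely $\phi^0_{\overrightarrow{A_2}}$ and $\phi^1_{\overrightarrow{A_2}}$ (Table~\ref{tab:refCox}). By Theorem~\ref{t:refined} one has, for every $i \geq 1$,
\[
\phi_{\gL_i}(x) = \phi_{\gL}(x)\,\phi^0_{\overrightarrow{A_i}}(x) + \phi_{\gL^-}(x)\,\phi^1_{\overrightarrow{A_i}}(x),
\]
and, adopting the conventions $\overrightarrow{A_0} = \varnothing$ and $\overrightarrow{A_1} = \{\bullet\}$ with the values $(\phi^0,\phi^1) = (0,1)$ and $(1,0)$ from Table~\ref{tab:refCox}, the same formula also holds for $i = 0,1$ and reproduces $\gL_0 = \gL^-$ and $\gL_1 = \gL$. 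Thus $\phi_{\gL_i}$ is, for all $i \geq 0$, a fixed linear combination of the chain data $\phi^0_{\overrightarrow{A_i}}, \phi^1_{\overrightarrow{A_i}}$ with $i$-independent coefficients $\phi_{\gL}, \phi_{\gL^-}$.

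The heart of the argument is a single combinatorial identity at the level of posets: $\overrightarrow{A_{i+1}} \simeq \overrightarrow{A_i} \xleftarrow{t} \overrightarrow{A_2}$, where $t$ is the maximal element of $\overrightarrow{A_i}$, together with $\overrightarrow{A_i} \setminus \{t\} \simeq \overrightarrow{A_{i-1}}$. Both are immediate from the definition of poset insertion: attaching a two-element chain above the top of an $i$-chain produces an $(i+1)$-chain, and deleting its top shortens it. Feeding these into Proposition~\ref{p:p01XvS} (with $X = \overrightarrow{A_i}$, $S = \overrightarrow{A_2}$, $v = t$) and substituting $\phi^0_{\overrightarrow{A_2}} = x+1$, $\phi^1_{\overrightarrow{A_2}} = -x$ yields, for every $i \geq 1$,
\begin{align*}
\phi^0_{\overrightarrow{A_{i+1}}} = (x+1)\,\phi^0_{\overrightarrow{A_i}} - x\,\phi^0_{\overrightarrow{A_{i-1}}},
&&
\phi^1_{\overrightarrow{A_{i+1}}} = (x+1)\,\phi^1_{\overrightarrow{A_i}} - x\,\phi^1_{\overrightarrow{A_{i-1}}}.
\end{align*}
(One may instead derive these from the closed forms $\phi^0_{\overrightarrow{A_i}} = (x^i-1)/(x-1)$ and $\phi^1_{\overrightarrow{A_i}} = -x(x^{i-1}-1)/(x-1)$ obtained via Proposition~\ref{p:refbyCox}, observing that both sequences solve the recurrence whose characteristic roots are $1$ and $x$.) Applying these two identical recurrences inside the linear combination of the first paragraph then gives $\phi_{\gL_{i+1}} = (x+1)\phi_{\gL_i} - x\phi_{\gL_{i-1}}$ for all $i \geq 1$, which is exactly condition~\eqref{it:interlace} for all $1 \leq i < n$.

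For condition~\eqref{it:degree} I would simply count vertices. The algebra $\gL_i = \gL \xleftarrow{v} \overrightarrow{A_i}$ is obtained by replacing $v$ with an $i$-element chain, so it has $|Q_0| - 1 + i$ vertices for $i \geq 1$, while $\gL_0 = \gL^-$ has $|Q_0| - 1$. Since $\det C_{\gL} = 1$, the Coxeter polynomial of any triangular algebra is monic of degree equal to its number of vertices, whence $\deg\phi_{\gL_{i+1}} = \deg\phi_{\gL_i} + 1$ for all $i \geq 0$. (Equivalently, in the formula of the first paragraph the top-degree term comes from the monic product $\phi_{\gL}\,\phi^0_{\overrightarrow{A_i}}$, of degree $(\deg\phi_\gL) + (i-1)$, which never cancels against the lower-degree term $\phi_{\gL^-}\,\phi^1_{\overrightarrow{A_i}}$.)

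I do not expect a genuine obstacle; the one point demanding care is to resist a tempting shortcut. One might try to realize $\gL_{i+1}$ by inserting $\overrightarrow{A_2}$ at the top $t$ of the chain already present in $\gL_i$ and then apply Theorem~\ref{t:refined} directly, hoping that $(\gL_i)^- = (1-e_t)\gL_i(1-e_t)$ coincides with $\gL_{i-1}$. This fails: deleting the top $t$ of the inserted chain also deletes the arrows leaving $v$ (which now emanate from $t$), so $(\gL_i)^-$ is generally \emph{not} $\gL_{i-1}$. This is precisely why the recurrence must be placed on the inserted poset, where Proposition~\ref{p:p01XvS} supplies it cleanly, rather than extracted by peeling vertices off the ambient algebra.
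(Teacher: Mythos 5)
Your argument is correct, but it is not the paper's argument, and --- ironically --- the ``tempting shortcut'' you dismiss in your final paragraph \emph{is} exactly the paper's proof, and it is valid. The paper fixes $i \geq 1$, writes $\gL_{i+1} \simeq \gL_i \xleftarrow{v_i} \{v_i < v_{i+1}\}$, observes that $(\gL_i)^- = \gL_{i-1}$, and applies Theorem~\ref{t:refined} once to the triple $(\gL_i, v_i, \overrightarrow{A_2})$. Your objection to this rests on a misreading of the notation $\gL^-$: it denotes the idempotent subalgebra $(1-e_{v_i})\gL_i(1-e_{v_i})$, \emph{not} the algebra of the quiver obtained by deleting $v_i$ together with its incident arrows. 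The idempotent subalgebra retains every composite path passing through $v_i$; concretely, each product $\rho_{i-1}\,(v_i,\beta_j)$ survives in $(\gL_i)^-$ and plays the role of the arrow $(v_{i-1},\beta_j)$ of $\gL_{i-1}$, so $(\gL_i)^- \simeq \gL_{i-1}$ after all. (Sanity check: for $\gL = K(u \to v \to w)$ and $i=2$, the subalgebra $(1-e_{v_2})\,K(u \to v_1 \to v_2 \to w)\,(1-e_{v_2})$ is spanned by $e_u, e_{v_1}, e_w$, the arrow from $u$ to $v_1$, and the two composites ending at $w$; it is isomorphic to $K(u \to v_1 \to w) = \gL_1$, and nothing gets disconnected.) In fact all the paper's proof needs is the Coxeter-polynomial identity $\phi_{(\gL_i)^-} = \phi_{\gL_{i-1}}$, and this follows from Lemma~\ref{l:CLvS} alone: the Cartan matrix of $(\gL_i)^-$ is the submatrix of that of $\gL_i$ obtained by deleting the row and column of $v_i$, and by that lemma this submatrix coincides entry by entry with the Cartan matrix of $\gL_{i-1}$.

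That said, your actual argument is a legitimate and genuinely different route. Where the paper telescopes at the algebra level, you keep the pair $(\gL, v)$ fixed, expand $\phi_{\gL_i}$ once and for all by Theorem~\ref{t:refined}, and place the three-term recurrence on the chain data $\phi^0_{\overrightarrow{A_i}}, \phi^1_{\overrightarrow{A_i}}$ via Proposition~\ref{p:p01XvS}, where the needed identifications $\overrightarrow{A_{i+1}} \simeq \overrightarrow{A_i} \xleftarrow{t} \overrightarrow{A_2}$ and $\overrightarrow{A_i} \setminus \{t\} \simeq \overrightarrow{A_{i-1}}$ are combinatorial trivialities. This buys complete independence from any algebra-level identification (no analysis of $(\gL_i)^-$ at all), at the cost of invoking Proposition~\ref{p:p01XvS} --- itself only sketched in the paper --- in addition to Theorem~\ref{t:refined}, whereas the paper's telescoping needs only the latter. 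Your base-case conventions for $i=0,1$, the closed forms $(x^i-1)/(x-1)$ and $-x(x^{i-1}-1)/(x-1)$, and the vertex-counting argument for condition~\eqref{it:degree} are all fine.
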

\begin{proof}[Sketch of proof]
Fix $i \geq 1$ and consider the poset which is a chain of the $i+1$
elements $v_1 < v_2 < \dots < v_{i+1}$. Then we can write
\begin{align*}
\gL_{i+1} = \gL \xleftarrow{v} \{v_1 < v_2 < \dots < v_i < v_{i+1}\}
&& \text{and} &&
\gL_i = \gL \xleftarrow{v} \{v_1 < v_2 < \dots < v_i\} ,
\end{align*}
so if we consider insertion at the vertex $v_i$ of $\gL_i$, then
$\gL_{i+1} \simeq \gL_i \xleftarrow{v_i} \{v_i < v_{i+1}\}$ and
$\gL_i^- = \gL_{i-1}$.
The claim now follows from Theorem~\ref{t:refined} for the algebra $\gL_i$,
the vertex $v_i$ and the poset $\{v_i < v_{i+1}\}$.
\end{proof}

The paper~\cite{delaPena15} presents several claims concerning the Mahler
measures of the Coxeter polynomials of interlaced towers of algebras.

\begin{defn}
The \emph{Mahler measure} of a self-reciprocal polynomial
$\phi \in \bZ[x]$ is defined as
$M(\phi) = \prod_{\lambda \in \Roots(\phi)} \max(1,|\lambda|)$.
\end{defn}

Note that $M(\phi) \geq 1$, and due to Kronecker's Theorem,
equality holds if and only if $\phi$ is a product of cyclotomic polynomials.

In order to capture the essence of the claims made in~\cite{delaPena15},
let us formulate them for the shortest possible interlaced towers, i.e.\
those consisting of three algebras. In what follows, $\gL, \gL', \gL''$
denotes an interlaced tower of algebras with 
Coxeter polynomials $\phi, \phi', \phi''$ and Mahler measures
$M = M(\phi)$, $M' = M(\phi')$, $M'' = M(\phi'')$.

\begin{prop}[\protect{\cite[Theorem~2, Theorem~5.5]{delaPena15}}]
Assume that $\Roots(\phi'') \subset \bS^1 \cup \bR_{>0}$.
Then the following assertions hold:
\begin{enumerate}
\renewcommand{\theenumi}{\alph{enumi}}
\item
$\Roots(\phi) \subset \bS^1 \cup \bR_{>0}$ and
$\Roots(\phi') \subset \bS^1 \cup \bR_{>0}$.

\item \label{it:M1}
If $M''=1$ then $M=M'=1$.

\item \label{it:Mprod}
If $M''>1$ then $M, M' < M''$ and $M'' \leq M \cdot M'$.
\end{enumerate}
\end{prop}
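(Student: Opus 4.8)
The plan is to pass entirely to the representing polynomials. By Corollary~\ref{cor:represent} write $\phi(x^2) = x^{\deg q}\,q(x+x^{-1})$ and similarly for $\phi',\phi''$ with representing polynomials $q,q',q''$; the recurrence~\eqref{it:interlace}, as already noted, transports to the three-term Chebyshev-type recurrence $q''(y) = y\,q'(y) - q(y)$, while condition~\eqref{it:simple} of the tower says exactly that $q$ has real and simple roots. The representability lemma then converts every geometric statement $\Roots(\phi^{(\ast)}) \subset \bS^1 \cup \bR_{>0}$ into reality of the roots of $q^{(\ast)}$, and every statement $\Roots(\phi^{(\ast)}) \subset \bS^1$ into confinement of those roots to $[-2,2]$. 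Thus the heart of the matter is a one-dimensional real-root problem for $q'' = y\,q' - q$.

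For parts (a) and (b) I would establish an interlacing property for this recurrence. First I'd record the \emph{Casoratian} $W = q''q - (q')^2$ and check, using $q'' = y\,q' - q$, that it is the same polynomial for every consecutive window; a sign analysis of $W$ at the real, simple roots of $q$ then forces the roots of $q'$ to separate those of $q$, and likewise the roots of $q''$ to separate those of $q'$, giving a chain $\Roots(q) \prec \Roots(q') \prec \Roots(q'')$ of interlacing real roots. Granting this, part (a) is immediate: condition~\eqref{it:simple} already yields $\Roots(\phi) \subset \bS^1 \cup \bR_{>0}$, and interlacing makes the roots of $q'$ real as well. For part (b), the hypothesis $M''=1$ together with Kronecker's theorem forces $\Roots(\phi'') \subset \bS^1$, i.e.\ $\Roots(q'') \subset [-2,2]$; since interlacing nests the roots of $q$ and $q'$ strictly between the extreme roots of $q''$, they too lie in $[-2,2]$, so $\phi,\phi'$ have all roots on $\bS^1$, and being monic self-reciprocal integer polynomials they are products of cyclotomic polynomials, whence $M=M'=1$.

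For part (c) I would translate the Mahler measure into the representing picture: under our root hypotheses the only roots of $\phi^{(\ast)}$ off the unit circle are positive reals $\lambda > 1$, each paired with a real root $y_0 = \lambda + \lambda^{-1} > 2$ of $q^{(\ast)}$, so $M(\phi^{(\ast)}) = \prod_{y_0 > 2} \tfrac{1}{2}\bigl(y_0 + \sqrt{y_0^2 - 4}\bigr)$, and the monotonicity of $y \mapsto \tfrac12(y+\sqrt{y^2-4})$ combined with interlacing would yield the strict bounds $M,M' < M''$ by comparing outermost roots. The hard part will be the inequality $M'' \le M\cdot M'$, and this is where I expect a genuine proof to break down: Mahler measure is multiplicative on \emph{products} of polynomials, but $\phi''$ is the \emph{linear combination} $(x+1)\phi' - x\phi$, with no divisibility relation tying $\phi''$ to $\phi\cdot\phi'$, so interlacing --- which controls only the locations of the transformed roots, not the product defining the Mahler measure, whose number of factors moreover jumps with the degree --- cannot close the argument. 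Compounding this, the interlacing itself is not automatic: because the recurrence is anchored at the high-degree polynomial $q$ rather than at degrees $0$ and $1$, its validity hinges on the sign of the Casoratian $W$, which the tower axioms do not control --- exactly why the cited Sturm theorem only \emph{seems to imply} interlacing. I therefore expect the Mahler-measure assertions of part (c) to be the precise point at which counterexamples enter.
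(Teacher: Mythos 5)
There is no proof of this proposition in the paper for you to match, because the paper's point is that the proposition is \emph{false}: it is quoted from \cite{delaPena15} precisely in order to be refuted. Immediately after stating it, the paper says ``We give counterexamples to the above claims~\eqref{it:M1} and~\eqref{it:Mprod}'' and constructs three interlaced towers $\gL,\gL',\gL''$ via Proposition~\ref{p:tower} (insertion of the chains $\{v_1<v_2\}$ and $\{v_1<v_2<v_3\}$ at a vertex), with condition~\eqref{it:simple} checked by computer algebra. The first tower --- extended canonical algebras of weights $(2,3,4)$, $(2,3,5)$, $(2,3,6)$ --- has $M''=1$ but $M\approx 1.281$, $M'\approx 1.176$, refuting claim~\eqref{it:M1}; the second, path algebras of trees, has $M\approx 1.722 > M'\approx 1.640 > M''\approx 1.582$, refuting $M,M'<M''$; the third, built on the $\wt{E}_8$ tree, has $M=1$, $M'\approx 1.176$, $M''\approx 1.230 > M\cdot M'$, refuting $M''\le M\cdot M'$. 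So no proof attempt could have succeeded, and the correct ``solution'' is a refutation, not a proof.

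Measured against this, your proposal gets the essential diagnosis right but mislocates the failure. You correctly observe that the tower axioms control only $q$ (condition~\eqref{it:simple}), not the pair $(q,q')$, so the sign of your Casoratian $W=q''q-(q')^2$ is not determined and interlacing does not follow --- this is exactly what the paper means when it says the cited Sturm theorem only ``seems to imply'' an interlacing property. And you correctly predict that counterexamples exist. But you assert that part~\eqref{it:M1} would be safe and that the breakdown occurs only at part~\eqref{it:Mprod}. Since your conditional derivation ``interlacing $\Rightarrow$ \eqref{it:M1}'' is sound, the paper's first counterexample shows that the interlacing chain itself genuinely fails there: in that example $q$ has real simple roots, yet at least one lies outside $[-2,2]$ (because $M>1$), while every root of $q''$ lies in $[-2,2]$ (because $M''=1$ forces, by Kronecker, $\Roots(\phi'')\subset\bS^1$), so the nesting $\Roots(q)\prec\Roots(q')\prec\Roots(q'')$ is impossible. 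Thus both \eqref{it:M1} and \eqref{it:Mprod} fail; only part (a) is left untouched by the paper's examples, and your argument establishes it only for $\phi$ (via condition~\eqref{it:simple} and the representability lemma), not for $\phi'$.
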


We give counterexamples to the above claims~\eqref{it:M1}
and~\eqref{it:Mprod}. In all the examples we 
start with a triangular algebra $\gL$, choose a vertex $v$
and construct the algebras $\gL', \gL''$ as insertions of
chains
$\gL' = \gL \xleftarrow{v} \{v_1 < v_2\}$ and
$\gL'' = \gL \xleftarrow{v} \{v_1 < v_2 < v_3\}$.
The conditions~\eqref{it:degree} and~\eqref{it:interlace}
in Definition~\ref{def:tower}
are guaranteed to hold thanks to Proposition~\ref{p:tower}.
The roots of the Coxeter polynomials $\phi, \phi', \phi''$
will all lie inside $\bS^1 \cup \bR_{>0}$ since the algebras
constructed are either (derived equivalent to) path algebras of trees or 
their Coxeter polynomials are products of cyclotomic polynomials.
Condition~\eqref{it:simple} is then verified by checking in each case
that the polynomial representing $\phi$ and its derivative
are relatively prime using the
\textsc{Magma} computer algebra system~\cite{MAGMA}, so
$\gL, \gL', \gL''$ forms an interlaced tower of algebras.

An alternative way to verify condition~\eqref{it:simple} is to note that
in each of the examples, the algebra $\gL$ is (derived equivalent to)
a path algebra of a tree $T$ with $9$ vertices, hence by
Proposition~\ref{p:bipartite} the Coxeter polynomial $\phi_\gL$ is
represented by the characteristic polynomial of the adjacency matrix of $T$.
A list of all the trees with at most $9$ vertices together with their
characteristic polynomials and their roots can be found in Table~A4
in the appendix of the textbook~\cite{CvRoSi10}.

In each example we give the algebra $\gL$ as quiver with relations
(if any) together with the vertex $v$, which is colored white.
Non-oriented edges in the quiver can be oriented arbitrarily.

\begin{example}
$M'' = 1$ but $M, M' > 1$.

Let $\gL$ be the extended canonical algebra~\cite{LenzingdelaPena11}
with weight sequence $(2,3,4)$ whose quiver is
\[
\xymatrix@R=1.5pc@C=0.8pc{
&& &&& {\bullet} \ar[drrr]^{x_1} \\
{\bullet} \ar[rr] &&
{\bullet} \ar[urrr]^{x_1} \ar[rr]_{x_2} \ar[dr]_{x_3} &&
{\bullet} \ar[rr]_{x_2} && {\bullet} \ar[rr]_{x_2} && {\bullet} \\
&&& {\bullet} \ar[rr]_{x_3} && {\circ} \ar[rr]_{x_3} &&
{\bullet} \ar[ur]_{x_3}
}
\]
with the relation $x_1^2-x_2^3+x_3^4=0$.
Then $\gL',\gL''$ are the extended canonical algebras with
weights $(2,3,5), (2,3,6)$,
and $M \approx 1.281, M' \approx 1.176, M''=1$.
Note that $\gL,\gL'$ are derived equivalent to the path algebras
of the stars with arm lengths $(2,4,5), (2,3,7)$
and $\gL''$ is derived equivalent to the canonical algebra
with weights $(2,3,7)$, see
\cite[Propositions~3.3 and~3.5]{LenzingdelaPena11}.
The Coxeter polynomial $\phi_\gL$ is represented by the polynomial
$y^9 - 8y^7 + 20y^5 - 17y^3 + 3y$
(no.~91 in~\cite[Table~A4]{CvRoSi10}).
\end{example}

\begin{example}
$M''>1$ but $M > M' > M''$.

Let $\gL$ be the path algebra of the tree
\[
\xymatrix@=1.5pc{
& & {\bullet} \ar@{-}[d] & & {\bullet} \ar@{-}[d] \\
{\bullet} \ar@{-}[r] & {\bullet} \ar@{-}[r] &
{\bullet} \ar[r] & {\circ} \ar[r] & 
{\bullet} \ar@{-}[r] & {\bullet} \ar@{-}[r] & {\bullet}
}
\]
Then $\gL, \gL', \gL''$ are path algebras of trees and
$M \approx 1.722$, $M' \approx 1.640$, $M'' \approx 1.582$.
The Coxeter polynomial $\phi_\gL$ is represented by the polynomial
$y^9 - 8y^7 + 19y^5 - 14y^3 + 3y$
(no.~85 in~\cite[Table~A4]{CvRoSi10}).
\end{example}

\begin{example}
$M < M' < M''$ but $M'' > M M'$.

Let $\gL$ be the path algebra of the tree (Euclidean diagram $\wt{E}_8$)
\[
\xymatrix@=1.5pc{
& & {\bullet} \ar@{-}[d] \\
{\bullet} \ar@{-}[r] & {\bullet} \ar@{-}[r] & {\bullet} \ar@{-}[r] &
{\bullet} \ar@{-}[r] & {\bullet} \ar@{-}[r] & {\bullet} \ar@{-}[r] &
{\bullet} \ar[r] & {\circ}
}
\]
Then $\gL, \gL', \gL''$ are the path algebras of stars with arm lengths
$(2,3,6), (2,3,7), (2,3,8)$ and $M = 1$, $M' \approx 1.176$,
$M'' \approx 1.230$.
The Coxeter polynomial $\phi_\gL$ is represented by the polynomial
$y^9 - 8y^7 + 20y^5 - 17y^3 + 4y$
(no.~92 in~\cite[Table~A4]{CvRoSi10}).
\end{example}

\bibliographystyle{amsplain}
\bibliography{refined}

\providecommand{\bysame}{\leavevmode\hbox to3em{\hrulefill}\thinspace}
\providecommand{\MR}{\relax\ifhmode\unskip\space\fi MR }
\providecommand{\MRhref}[2]{%
  \href{http://www.ams.org/mathscinet-getitem?mr=#1}{#2}
}
\providecommand{\href}[2]{#2}
\begin{thebibliography}{10}

\bibitem{ACampo76}
Norbert A'Campo, \emph{Sur les valeurs propres de la transformation de
  {C}oxeter}, Invent. Math. \textbf{33} (1976), no.~1, 61--67.

\bibitem{BGP73}
I.~N. Bern\v{s}te\u{\i}n, I.~M. Gel{'}fand, and V.~A. Ponomarev, \emph{Coxeter
  functors, and {G}abriel's theorem}, Uspehi Mat. Nauk \textbf{28} (1973),
  no.~2(170), 19--33.

\bibitem{Boldt95}
Axel Boldt, \emph{Methods to determine {C}oxeter polynomials}, Linear Algebra
  Appl. \textbf{230} (1995), 151--164.

\bibitem{BondalKapranov89}
A.~I. Bondal and M.~M. Kapranov, \emph{Representable functors, {S}erre
  functors, and reconstructions}, Izv. Akad. Nauk SSSR Ser. Mat. \textbf{53}
  (1989), no.~6, 1183--1205, 1337.

\bibitem{BondalOrlov01}
Alexei Bondal and Dmitri Orlov, \emph{Reconstruction of a variety from the
  derived category and groups of autoequivalences}, Compositio Math.
  \textbf{125} (2001), no.~3, 327--344.

\bibitem{MAGMA}
Wieb Bosma, John Cannon, and Catherine Playoust, \emph{The {M}agma algebra
  system. {I}. {T}he user language}, J. Symbolic Comput. \textbf{24} (1997),
  no.~3-4, 235--265, Computational algebra and number theory (London, 1993).

\bibitem{Bourbaki}
Nicolas Bourbaki, \emph{Lie groups and {L}ie algebras. {C}hapters 4--6},
  Elements of Mathematics (Berlin), Springer-Verlag, Berlin, 2002, Translated
  from the 1968 French original by Andrew Pressley.

\bibitem{CvRoSi10}
Drago\v{s} Cvetkovi\'{c}, Peter Rowlinson, and Slobodan Simi\'{c}, \emph{An
  introduction to the theory of graph spectra}, London Mathematical Society
  Student Texts, vol.~75, Cambridge University Press, Cambridge, 2010.

\bibitem{delaPena94}
Jos\'{e}~A. de~la Pe\~{n}a, \emph{Coxeter transformations and the
  representation theory of algebras}, Finite-dimensional algebras and related
  topics ({O}ttawa, {ON}, 1992), NATO Adv. Sci. Inst. Ser. C Math. Phys. Sci.,
  vol. 424, Kluwer Acad. Publ., Dordrecht, 1994, pp.~223--253.

\bibitem{delaPena14}
\bysame, \emph{Algebras whose {C}oxeter polynomials are products of cyclotomic
  polynomials}, Algebr. Represent. Theory \textbf{17} (2014), no.~3, 905--930.

\bibitem{delaPena15}
\bysame, \emph{On the {M}ahler measure of the {C}oxeter polynomial of an
  algebra}, Adv. Math. \textbf{270} (2015), 375--399.

\bibitem{Gabriel80}
Peter Gabriel, \emph{Auslander-{R}eiten sequences and representation-finite
  algebras}, Representation theory, {I} ({P}roc. {W}orkshop, {C}arleton
  {U}niv., {O}ttawa, {O}nt., 1979), Lecture Notes in Math., vol. 831, Springer,
  Berlin, 1980, pp.~1--71.

\bibitem{Happel88}
Dieter Happel, \emph{Triangulated categories in the representation theory of
  finite-dimensional algebras}, London Mathematical Society Lecture Note
  Series, vol. 119, Cambridge University Press, Cambridge, 1988.

\bibitem{Happel97}
\bysame, \emph{The trace of the {C}oxeter matrix and {H}ochschild cohomology},
  Linear Algebra Appl. \textbf{258} (1997), 169--177.

\bibitem{Happel09}
\bysame, \emph{The {C}oxeter polynomial for a one point extension algebra}, J.
  Algebra \textbf{321} (2009), no.~7, 2028--2041.

\bibitem{HerschendIyama11}
Martin Herschend and Osamu Iyama, \emph{{$n$}-representation-finite algebras
  and twisted fractionally {C}alabi-{Y}au algebras}, Bull. London Math. Soc.
  \textbf{43} (2011), no.~3, 449--466.

\bibitem{Kontsevich98}
Maxim Kontsevich, \emph{Course at the {ENS}}, Paris, 1998.

\bibitem{Ladkani08a}
Sefi Ladkani, \emph{On derived equivalences of categories of sheaves over
  finite posets}, J. Pure Appl. Algebra \textbf{212} (2008), no.~2, 435--451.

\bibitem{Ladkani08b}
\bysame, \emph{On the periodicity of {C}oxeter transformations and the
  non-negativity of their {E}uler forms}, Linear Algebra Appl. \textbf{428}
  (2008), no.~4, 742--753.

\bibitem{Lenzing99}
Helmut Lenzing, \emph{Coxeter transformations associated with
  finite-dimensional algebras}, Computational methods for representations of
  groups and algebras ({E}ssen, 1997), Progr. Math., vol. 173, Birkh\"{a}user,
  Basel, 1999, pp.~287--308.

\bibitem{LenzingdelaPena04}
Helmut Lenzing and Jos\'{e}~A. de~la Pe\~{n}a, \emph{Supercanonical algebras},
  J. Algebra \textbf{282} (2004), no.~1, 298--348.

\bibitem{LenzingdelaPena08}
\bysame, \emph{Spectral analysis of finite dimensional algebras and
  singularities}, Trends in representation theory of algebras and related
  topics, EMS Ser. Congr. Rep., Eur. Math. Soc., Z\"{u}rich, 2008,
  pp.~541--588.

\bibitem{LenzingdelaPena09}
\bysame, \emph{A {C}hebysheff recursion formula for {C}oxeter polynomials},
  Linear Algebra Appl. \textbf{430} (2009), no.~4, 947--956.

\bibitem{LenzingdelaPena11}
\bysame, \emph{Extended canonical algebras and {F}uchsian singularities}, Math.
  Z. \textbf{268} (2011), no.~1-2, 143--167.

\bibitem{ReitenVdB02}
I.~Reiten and M.~Van~den Bergh, \emph{Noetherian hereditary abelian categories
  satisfying {S}erre duality}, J. Amer. Math. Soc. \textbf{15} (2002), no.~2,
  295--366.

\bibitem{Ringel84}
Claus~Michael Ringel, \emph{Tame algebras and integral quadratic forms},
  Lecture Notes in Mathematics, vol. 1099, Springer-Verlag, Berlin, 1984.

\bibitem{Schwenk73}
Allen~J. Schwenk, \emph{Almost all trees are cospectral}, New directions in the
  theory of graphs ({P}roc. {T}hird {A}nn {A}rbor {C}onf., {U}niv. {M}ichigan,
  {A}nn {A}rbor, {M}ich., 1971), Academic Press, New York, 1973, pp.~275--307.

\bibitem{Stanley}
Richard~P. Stanley, \emph{Enumerative combinatorics. {V}olume 1}, second ed.,
  Cambridge Studies in Advanced Mathematics, vol.~49, Cambridge University
  Press, Cambridge, 2012.

\end{thebibliography}

\end{document}